\newcommand{\bbR}{\mathbb R}
\newcommand{\bp}{\bar\phi}
\newcommand{\wc}{{\rm w}_c}
\newcommand{\bbP}{\mathbb P}
\newcommand{\bbZ}{\mathbb Z}
\renewcommand{\leq}{\leqslant}
\newcommand{\inte}{\stackrel\circ}
\newcommand{\abs}[1]{ \lvert #1 \rvert}
\newcommand{\tmix}{T_{\rm mix}}
\newcommand{\tri}{\mathbb T}
\newtheorem{theorem}{Theorem}[section]
\newtheorem{conjecture}{Conjecture}[section]
\newtheorem{maintheorem}{Theorem}
\newtheorem{lemma}[theorem]{Lemma}
\newtheorem{proposition}[theorem]{Proposition}
\newtheorem{corollary}[theorem]{Corollary}
\newtheorem{remark}[theorem]{Remark}
\newtheorem{claim}[theorem]{Claim}
\newtheorem{definition}[theorem]{Definition}
\title[Lozenge dynamics and macroscopic shape]{Lozenge tilings, Glauber dynamics and macroscopic shape}
\author{Beno\^it  Laslier}
\address{
 Institut Camille Jordan, Universit\'e Lyon 1, 43
  bd du 11 novembre 1918, 69622 Villeurbanne, France\\
E-mail: laslier@math.univ-lyon1.fr}
\author{Fabio Lucio Toninelli}
\address{CNRS and 
 Institut Camille Jordan, Universit\'e Lyon 1, 43
  bd du 11 novembre 1918, 69622 Villeurbanne, France\\
E-mail: toninelli@math.univ-lyon1.fr}
\begin{document}

\maketitle

\begin{abstract}
  We study the Glauber dynamics on the set of tilings of a finite
  domain of the plane with lozenges of side $1/L$. Under
  the invariant measure of the process (the uniform measure over all
  tilings), it is well known
  \cite{CKP} that the random  height function associated to the tiling
  converges in  probability, in the scaling limit
  $L\to\infty$, to a non-trivial macroscopic shape minimizing a
  certain surface tension functional. According to the boundary
  conditions the macroscopic shape can be either analytic or contain
  ``frozen regions'' (Arctic Circle phenomenon \cite{CLP,JPS}).

  It is widely conjectured, on the basis of theoretical
  considerations \cite{Spohn,Henley}, partial mathematical results
  \cite{Wilson,CMT} and numerical simulations for similar models (\cite{Destainville}, cf. also the bibliography in \cite{Wilson,Henley}), that the Glauber
  dynamics approaches the equilibrium macroscopic shape  in a time of order $L^{2+o(1)}$.
  In this work we prove this conjecture, under the assumption
  that the macroscopic equilibrium shape contains no ``frozen
  region''. 

\end{abstract}

\section{Introduction}

Random lozenge tilings and their Glauber dynamics are a very natural
object in mathematical physics, probability, combinatorics and
theoretical computer science.  Let $\mathcal T_L$ be the triangular
lattice of mesh $1/L$ and call the union of two adjacent triangular
faces a ``lozenge''. A region of $\mathcal T_L$ is called tileable if
it can be covered by non-overlapping lozenges, so that no hole is
left, cf. Figure \ref{fig:lozenges}. Typically, the number of possible
tilings of a tileable region grows like the exponential of $L^2$ time
its area, when the lattice mesh tends to zero. To a lozenge tiling is
naturally associated a height function, so that a tiling can be seen
as a discrete interface, see again Figure \ref{fig:lozenges}.  When
the mesh tends to zero and the height function at the boundary of the
domain tends to some well-defined boundary height $\varphi$, the
height function of a random tiling sampled from the uniform measure
tends in probability to a certain limit shape $\bp$. This limit shape
minimizes the surface energy functional defined in formula
\eqref{eq:functional}, compatibly with the boundary height. According
to the choice of the boundary height, $\bp$ is either analytic, with
$\nabla\bp$ contained in the interior of a bounded set $\tri$ of
``allowed slopes'' ($\tri$ is a triangle, see later) or it can show
coexistence of analytic portions (``liquid phase'') and ``frozen
regions'' or facets where $\nabla\bp$ is on the boundary of $\tri$
(facets correspond microscopically to regions where at least one of
the three types of lozenges has vanishing probability of
being present). For special boundary heights, $\bp$ can happen to be
non-frozen and flat (with constant slope in the interior of $\tri$).

The Glauber dynamics on lozenge tilings is a natural Markov process
whose updates consist in rotating by an angle $180^\circ$ three lozenges that share a
vertex, see Figure \ref{fig:flip}.
\begin{figure}[h]
  \includegraphics[width=5cm]{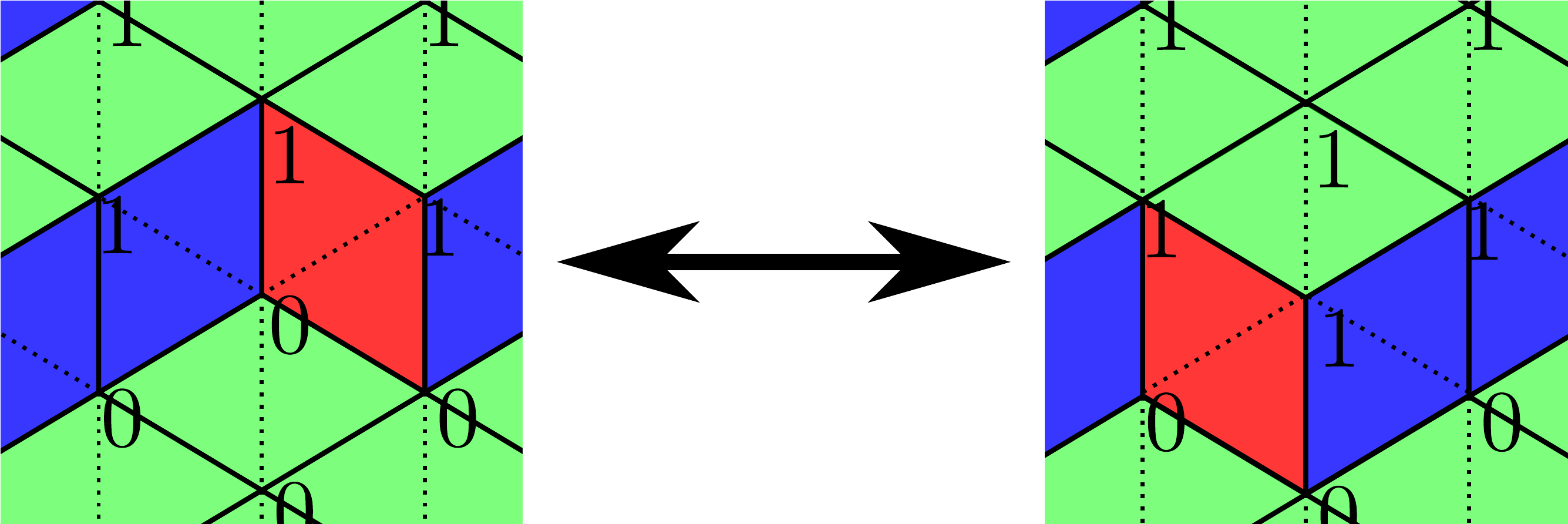}
\caption{The allowed updates. }
\label{fig:flip}
\end{figure}
Such dynamics received a lot of attention in theoretical computer
science \cite{LRS,Wilson,RT} since it is a conceptually and
algorithmically simple way of sampling a random tiling in the
long-time limit (the invariant measure of the process is the uniform
one)\footnote{Let us mention that there are alternative,
  algorithmically more efficient,
  ways  to sample uniform random tilings, see for instance
  \cite{Wil2,KW} or \cite{Mucha}.
}. In this context, a natural question that was investigated in the
mentioned works was, how long one should run the dynamics before the
uniform measure is reached. The Glauber dynamics is an even more natural
stochastic process in mathematical physics: In the height
function representation, lozenge dynamics is equivalent to the
zero-temperature dynamics of interfaces, separating ``$+$'' and
``$-$'' spins, for the \emph{three-dimensional} Ising model. Then, the
question of convergence to equilibrium takes a rather different
flavor: how long does it take before the interface, started far from
equilibrium, approximates the macroscopic shape? Does the stochastic
evolution converge to a deterministic, macroscopic evolution under
suitable time rescaling?

It is widely believed that the time to reach equilibrium should scale
like $L^2$: actually, one expects that under diffusive scaling of time
(i.e. setting $\tau=t/L^2 $) the limiting deterministic evolution  of
the height function $\phi$ should
roughly be the gradient flow associated to the surface energy
functional, 
\begin{eqnarray}
  \label{eq:5}
  \frac{d}{d\tau}\phi= \mu(\nabla\phi)\mathcal L\phi. 
\end{eqnarray}
Here
 $\mathcal L$, directly related to the first variation of the surface energy functional, is  the non-linear elliptic operator defined in
\eqref{eq:PDE}, while $\mu(\nabla\phi)$ is a ``mobility coefficient''. See
\cite{Spohn} for an illuminating discussion of these issues.
For $\tau\gg1$ (i.e. $t\gg L^2$) the interface should asymptotically reach the
macroscopic shape, characterized by $\mathcal L\bp=0$.
This belief is supported  by numerical
simulations (for this and related models, see \cite{Destainville} and
references in \cite{Wilson,Henley}), heuristic arguments \cite{Henley}
and partial mathematical results \cite{Wilson,CMT}.

Let us also mention that, for the zero-temperature \emph{two-dimensional}
(and not three-dimensional)
Ising model, convergence of the evolution of
spin droplets to a deterministic equation of anisotropic mean-curvature
type under diffusive scaling has been
achieved very recently \cite{LST,LST2}. The limit equation is
somewhat the analog of \eqref{eq:5}, with the notable difference that
in that case
$\phi$ describes a curve in the plane and not a surface in
three-dimensional space. What helps in the
two-dimensional case is that, as observed in \cite{Spohn}, the
stochastic interface evolution can be locally mapped to well-studied
interacting particle
processes like one-dimensional symmetric simple exclusion and zero-range
processes. None of these mappings  holds in the three-dimensional case
(i.e. for lozenge dynamics) and a host of new ideas is called for.

\medskip

The main result of the present work  is a mathematical confirmation of the $L^2$
scaling of the equilibration time:
\begin{maintheorem}[Informal version]\label{th:inf1}
  If the macroscopic shape $\bp$ contains no frozen region then, whatever the initial condition of the dynamics, at time $L^{2+o(1)}$ the height function is 
with high probability
at distance $o(1)$ from $\bp$.
\end{maintheorem}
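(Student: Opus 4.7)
The strategy is to reduce the result to controlling the two extremal dynamics: by the standard graphical construction the lozenge Glauber dynamics has a coupling that preserves the coordinate-wise partial order on height functions, so any trajectory is sandwiched between those started at the maximal and minimal height functions $\phi^+$ and $\phi^-$ compatible with the boundary data. By the $\pm$-symmetry lozenge $\leftrightarrow$ anti-lozenge, it suffices to show that with high probability $\phi_t^+\le \bp+o(1)$ at time $t=L^{2+o(1)}$, and the analogous lower bound for $\phi^-$.

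The analytic core is local equilibration on a mesoscopic scale. I would partition the domain into boxes of side $\ell$ with $L^{-1}\ll\ell\ll 1$. Because by hypothesis $\bp$ has no frozen region, $\nabla\bp$ lies in a compact subset of the interior of $\tri$, so on every mesoscopic box the local slope imposed by $\bp$ is uniformly bounded away from the boundary of $\tri$. This is the regime where the partial mixing-time estimates of \cite{CMT} apply and give a relaxation time $(\ell L)^{2+o(1)}$ for lozenge dynamics inside such a box with any slope-compatible boundary data.

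I would then construct a deterministic ``supersolution'' profile $\psi_t$ that starts at $\phi^+$ and decreases toward $\bp$ via an iterative scheme. Divide the time interval $[0,L^{2+o(1)}]$ into rounds of length $(\ell L)^{2+o(1)}$, giving roughly $\ell^{-2+o(1)}$ rounds. In each round, equilibrate the dynamics locally inside every mesoscopic box and argue by a one-block / replacement estimate that the new local profile is, up to $o(1)$ errors, the average height prescribed by the current boundary values. By convexity of the surface tension, the resulting discrete update is a time-discretization of the gradient flow \eqref{eq:5}, which by the no-frozen-region assumption converges monotonically to $\bp$ on macroscopic time $\tau=O(\poly)$, i.e.\ microscopic time $L^{2+o(1)}$. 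A union bound over rounds then shows that the true stochastic profile $\phi^+_t$ remains below $\psi_t$ up to $o(1)$, and hence eventually enters a $o(1)$-neighbourhood of $\bp$.

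The main obstacle is the replacement step inside each round: proving that, after $(\ell L)^{2+o(1)}$ time, the random height function on a mesoscopic box is in $L^\infty$ close to its equilibrium mean conditional on its instantaneous boundary values. In three dimensions no exact bijection with an integrable particle system is available (in contrast to the 2D Ising case of \cite{LST,LST2}), so this estimate must be obtained from the mesoscopic spectral gap together with concentration / martingale arguments, and one has to verify that mesoscopic errors do not accumulate catastrophically across the $\ell^{-2+o(1)}$ iterations. It is precisely the no-frozen-region hypothesis that keeps the local slopes uniformly in the interior of $\tri$ throughout the scheme and makes the whole comparison closeable; handling frozen facets would require entirely different ideas, since in such regions one of the three lozenge types is absent and the mesoscopic mixing input simply breaks down.
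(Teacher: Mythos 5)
Your skeleton — monotonicity to reduce to sandwiching between extremal initial data, mesoscopic boxes of side $\ell$ with $1/L \ll \ell \ll 1$, iterative rounds of duration $(\ell L)^{2+o(1)}$ in which the dynamics locally equilibrates, and a deterministic supersolution that descends toward $\bp$ — matches the overall architecture of the paper, and you correctly identify the crux: the ``replacement step'' showing that after a mesoscopic mixing time the local profile sits, to high precision, at the equilibrium mean determined by its current boundary values. But at exactly that crux, your proposal has a genuine gap. You say this estimate ``must be obtained from the mesoscopic spectral gap together with concentration / martingale arguments.'' That is not enough: what is actually needed is not just concentration of the height around \emph{some} mean, but a quantitative $O(1/L)$-accurate identification of \emph{where} that equilibrium mean sits as a function of the boundary data on a mesoscopic disk of radius $\approx L^{-1/2+\delta}$. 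Generic spectral-gap or martingale arguments give you fluctuations bounds, but do not tell you the finite-$L$ correction to the mean relative to the continuum PDE solution, and it is precisely this correction that has to be controlled to within a factor $\ll \epsilon_L \ell^2$ for the iterative descent not to stall.

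The paper resolves this with an idea your proposal does not contain at all: Theorem~\ref{th:inf2} / Theorem~\ref{th:crazy1}, the statement that \emph{every} admissible second-order local structure $(\nabla\bp, H^{\bp})$ is realized as the local structure at some interior point of some hexagonal macroscopic shape $\bp_{ABC}$. This converts the mesoscopic equilibrium question, for an arbitrary non-extremal $\bp$, into a question about the exactly solvable hexagon, where Petrov's determinantal formulas (Theorem~\ref{th:Petrov}, Appendix~\ref{app}) give the mean to $O(1/L)$ and fluctuations to $L^{-1+\epsilon}$. Proposition~\ref{th:equilibrio} then compares the boundary height on $\partial D_u$ to the hexagon mean and uses monotonicity plus Proposition~\ref{th:Petrov2}. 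Without this comparison to an integrable reference case, I do not see how you close the replacement step; the no-frozen-region hypothesis keeps slopes in $\inte\tri$, but that alone does not produce quantitative mean estimates.

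Two smaller imprecisions. First, the mesoscopic mixing bound you invoke from \cite{CMT} is not the flat-macroscopic-shape theorem of that paper; what is used here is the floor/ceiling estimate (Lemma~\ref{lemma:Wilson}, i.e.\ \cite[Theorem~4.3]{CMT}), and the speed comes from the floor--ceiling distance being only $L^{-1+\delta/40}$, not merely from the slope being interior. Second, the paper does not literally discretize the gradient flow \eqref{eq:5}; it builds an explicit barrier $\bp + c + \epsilon_L(1 - j/L)\psi$ with a hand-chosen $\psi$ whose Hessian is negative and diagonal, so that the linearized operator $\hat{\mathcal L}\psi$ is pointwise negative (Remark~\ref{rq:choix_psi}). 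Your ``the discrete update is a time-discretization of the gradient flow'' is a heuristic motivation, not a proof step, and would need to be replaced by something this concrete.
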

See Theorem \ref{th:pratica2} for a precise formulation. Some previous
results in this direction are mentioned in Section \ref{sec:previous}.

It is at present unclear to us whether the restriction to non-frozen
macroscopic shapes is just a technical limitation or if something
deeper happens. Frozen regions reflect the singularities of the
surface tension functional and it is a priori possible
 that such singularities might have a drastic effect on
dynamics. For the two-dimensional Ising model at zero temperature,
singularities of the surface tension do not modify the time scaling
$L^2$, but they have the effect that the deterministic macroscopic interface evolution one
obtains in the diffusive limit  is not smooth:
the curvature of the interface is in general not differentiable in space \cite[Section 2.2.2]{LST}.

\medskip

An obvious difficulty in attacking the lozenge dynamics problem is
that we have no
\emph{a priori} knowledge of the non-equilibrium interface fluctuations
during the evolution (before the equilibrium state is reached), or even of their order of
magnitude. A natural idea is to 
 look at the system on mesoscopic regions, sufficiently small so that
 macroscopic properties of the interface (slope, curvature, ...) are
almost constant but much
larger than the lattice spacing so that statistical fluctuations are small.
More precisely one might expect that, if at some time $t$ the interface
approximates some smooth height function $\phi_t$, then locally in the
neighborhood of a point where the normal vector to $\phi_t$ is ${\bf
  n}$, the statistics of the interface will be determined by the
infinite-volume, translation invariant Gibbs state of slope ${\bf
  n}$ \cite{KOS}.
This reasoning would suggest height fluctuations of order
$\sqrt{\log L}/L$. ``Local equilibrium'' ideas of this type
are rather
classical in physics, for the macroscopic derivation of the equations
of fluid dynamics
from microscopic particle systems.

In our case, this intuition seems extremely difficult to substantiate
mathematically, yet we do use it somehow. Indeed the route we follow to prove Theorem \ref{th:inf1} is to 
show  that, if time is rescaled a bit more than diffusively (by setting
$\tau=t/L^{2+\epsilon}$ with
$\epsilon>0$ arbitrarily small), then the interface is bounded above
and below by two evolving
 surfaces that follow an auxiliary deterministic equation that morally looks
like \eqref{eq:5} and both converge to the macroscopic shape.
 Via  this auxiliary, slowed down, evolution we are able
to make use of the ``local equilibrium'' intuition
mentioned above.


We will not try to explain in detail the idea of the proof of Theorem
\ref{th:inf1} in this introduction.
At this stage, let us just point out that one of the main difficulties
we have to overcome is to precisely estimate average height and
height fluctuations in mesoscopic regions of size $L^{-1/2+\delta}$ with
$\delta$ small, for a rather large class of boundary heights.
In \cite{CMT} the analog of Theorem \ref{th:inf1} in the special case
where the macroscopic shape is flat (i.e. $\bp$ is an affine function)
was proven: there, the problem of controlling equilibrium fluctuations
in finite domains was bypassed since it was possible to reduce
to fluctuation estimates in the translation invariant infinite-volume
Gibbs states mentioned above. In the present
case, this trick cannot work.

In general, it is only for special domains and
boundary conditions that precise estimates on height fluctuations and
on the finite-$L$ corrections to the average height w.r.t. to the
macroscopic limit $\bp$ are known in the literature. See for instance
\cite{Kdom} for domino tilings. More relevant for us are the works
\cite{Petrov1,Petrov2} by L. Petrov. There, the author
considers uniform random tilings of a hexagon $\varhexagon_{abc}$ of
sides $a,b,c$. In this case, the macroscopic shape (that is not flat)
can be written down ``explicitly'' \cite{CLP} (cf. also Sections
\ref{sec:hexa} and \ref{sec:localstructures} below) and the equilibrium measure has an exact determinantal
representation \cite{Petrov1,Petrov2} which allows for precise
asymptotic analysis.  

One of the main new ideas of our work is that, locally in regions of
size $\approx L^{-1/2+\delta}$, we can compare the height of the
randomly evolving interface with the random \emph{equilibrium} height
of lozenge tilings in a hexagonal region $\varhexagon_{abc}$ with
suitably chosen, time-dependent parameters $a,b,c$.  Technically, one
key result we prove, which might be of interest by itself, is the
following:
\begin{maintheorem}[Informal version]
\label{th:inf2}
  Given a macroscopic shape $\bp$ in a domain $U$, if $\bp$ is smooth in the neighborhood of a point $u\in U$, then the Taylor expansion of $\bp$ around $u$ coincides  up to second order with the Taylor expansion of the macroscopic shape associated to some hexagon $\varhexagon_{abc}$.
\end{maintheorem}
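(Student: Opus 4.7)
The plan is to match the Taylor coefficients by choosing both the shape parameters $(a,b,c)$ of the hexagon and the point $v$ inside $\varhexagon_{abc}$ at which to evaluate its macroscopic shape, then rigidly translating so as to align with $u$. First I normalize: by translating the plane so that the chosen hexagon point $v$ is mapped to $u$, and by adding a constant to the hexagon macroscopic shape, the matching of the zeroth order term is automatic. Since $\bp$ is smooth at $u$, its slope $s_0:=\nabla\bp(u)$ lies in the interior of $\tri$ (by the non-frozen hypothesis), and its Hessian $H_0:=D^2\bp(u)$ satisfies one linear relation, namely the Euler--Lagrange equation $\mathcal L\bp(u)=0$ from \eqref{eq:PDE}. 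It remains to match two gradient components and two effectively independent Hessian components using the five real degrees of freedom $(a,b,c,v)$, so the parameter count is compatible.

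To carry out the matching concretely I would rely on the Cohn--Larsen--Propp description of the hexagon macroscopic shape \cite{CLP} (see also Sections \ref{sec:hexa} and \ref{sec:localstructures}), or equivalently on the Kenyon--Okounkov complex Burgers parameterization: each point $v$ in the liquid region of $\varhexagon_{abc}$ is associated with a complex number $\zeta(v)\in\mathbb H$ satisfying an explicit algebraic equation in $v$ and $(a,b,c)$, and both the slope and the Hessian of $\bar\psi_{abc}$ at $v$ are expressible in closed form in terms of $\zeta$ and its derivatives. Concretely I would: (i) for any fixed $(a,b,c)$, invert the smooth bijection $v\mapsto\nabla\bar\psi_{abc}(v)$ from the interior of $\varhexagon_{abc}$ onto $\tri^\circ$ to obtain a unique point $v(a,b,c)$ of slope $s_0$; (ii) compute $H(a,b,c):=D^2\bar\psi_{abc}(v(a,b,c))$ by differentiating the Burgers parameterization once more; (iii) show that the map $(a,b,c)\mapsto H(a,b,c)$ sweeps out the entire two-dimensional subspace of symmetric matrices satisfying the PDE constraint at slope $s_0$.

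The main obstacle is step (iii). The overall scaling $(a,b,c)\mapsto(\lambda a,\lambda b,\lambda c)$ rescales $H$ by $1/\lambda$, so gives a one-parameter sweep of Hessians for free; the two remaining shape ratios of the hexagon must then cover the complementary direction. I would prove surjectivity by combining a local computation (that the differential of $(a,b,c)\mapsto H(a,b,c)$ has full rank, obtained by implicit differentiation of the algebraic equation defining $\zeta$) with a global open-and-closed argument: openness of the image follows from the local computation, and its closedness reduces to the analysis of the degeneration where $(a,b,c)$ approaches the boundary of its parameter region -- precisely the regime in which $s_0$ ceases to be achieved in the interior of $\varhexagon_{abc}$ and a frozen region forms. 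The non-frozen hypothesis at $u$, which forces $s_0\in\tri^\circ$, is exactly what keeps this boundary analysis well within the admissible parameter space and is the essential input that makes the surjectivity go through.
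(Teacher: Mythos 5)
Your plan reproduces the architecture of the paper's proof: recast the claim as surjectivity of an explicit smooth map from hexagon parameters to local structures, establish a local full-rank property of its differential, and upgrade to global surjectivity via an open-and-closed (Hadamard-type) argument in which the decisive point is that the boundary of parameter space is sent to the boundary of the set $\mathcal A$ of admissible local structures. What differs is your factorization: you first fix the slope by solving for $v=v(a,b,c)$, then vary $(a,b,c)$ to sweep the Hessian, whereas the paper works directly with the four-dimensional map $f:(a,b,x,y)\mapsto(z_1,z_2,z_{11},z_{12})$, normalizing $a+b+c=1$ to absorb the scaling symmetry you point out. Your route introduces an extra unproven lemma at step (i), namely that $v\mapsto\nabla\bp_{abc}(v)$ is a diffeomorphism from $\Circle_{abc}$ onto $\inte\tri$ for each fixed $(a,b,c)$; this is plausible but does not come for free, and the paper avoids needing it. The paper's analogue of your local full-rank claim is the explicit computation that $\det Df<0$ everywhere on $W$ (Theorem \ref{jacob2}); your plan to obtain this by implicit differentiation of the Kenyon--Okounkov Burgers equation is a plausible alternative but is not carried out, and there is no reason to expect it to be lighter than the paper's calculation.

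The one genuine error is in your closedness argument. You assert that the degeneration $(a,b,c)\to\partial\mathbb V$ is ``precisely the regime in which $s_0$ ceases to be achieved in the interior of $\varhexagon_{abc}$.'' That is not what happens: for every non-degenerate $(a,b,c)$ the slope map on $\Circle_{abc}$ is onto $\inte\tri$, so a point $v(a,b,c)$ with slope $s_0$ always exists. What actually breaks down is that as $(a,b,c)$ degenerates, the local structure $\bigl(\nabla\bp_{abc},H^{\bp_{abc}}\bigr)$ evaluated at $v(a,b,c)$ escapes to $\partial\mathcal A$: either $v(a,b,c)$ is pushed to $\partial\Circle_{abc}$ so the slope tends to $\partial\tri$, or a directional derivative of the slope diverges so the Hessian blows up. Proving this dichotomy (the paper's Proposition \ref{th:bordobordo}) is a nontrivial case analysis on the explicit Cohn--Larsen--Propp formulas and is the second load-bearing technical input; it does not follow from the non-frozen hypothesis at $u$, which only guarantees $s_0\in\inte\tri$ and says nothing about where in parameter space the degeneration occurs. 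As written, your closure step is a hope rather than an argument, and the stated reason is incorrect.
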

We will call the second-order Taylor expansion of $\bp$ at a given
point a ``local structure''.  We would like to emphasize that Theorem
\ref{th:inf2} is \emph{a priori} not obvious:  As we will see in Section
\ref{sec:localstructures}, the set of all admissible local structures
associated to arbitrary macroscopic shapes is parametrized by four
variables (two for the slope and two for the Hessian matrix), while
``hexagonal'' local structures are parametrized by a different set of
four variables with a rather different meaning (two for the
side-lengths of the hexagon and two for the coordinates of a point
inside the hexagon). We have then to check that a certain explicit but complicated
function from $\bbR^4 $ to $\bbR^4$ is surjective (actually it turns
out to be a bijection).

Theorem \ref{th:inf2} would be false
if ``second order'' were replaced, say, by ``third order'' (it would
require surjectivity of a function from $\bbR^4$ to $\bbR^8$).  Remarkably, for the proof of Theorem \ref{th:inf1} the
second-order comparison provided by Theorem \ref{th:inf2} is
sufficient. The basic reason is that, in regions of size
$L^{-1/2+\delta}$, third- or higher-order terms in the expansion of
the macroscopic shape give negligible contributions of order
$L^{-3/2+3\delta}$, much smaller than the minimal significant
length-scale of the model, which is the lattice spacing $1/L$.

\section{Random lozenge tilings and height function}

\subsection{Monotone surfaces and height functions} 
\label{sec:height}
Let $L$ be an integer, that will be taken large later. Closed squares
in $\bbR^3$ of side $1/L$, with the four vertices
in $(\bbZ/L)^3$, will be called \emph{ faces of
$(\bbZ/L)^3$}.
\begin{definition}
A discrete (or stepped) monotone surface $\Sigma_L$ is a 
connected union of faces of $(\bbZ/L)^3$ that projects
bijectively on the $111$ plane $P_{111}$.
\end{definition}

Look at Figure \ref{fig:lozenges}: the projection of each square
face of $\Sigma_L$ is a lozenge with angles $60^\circ$ and $120^\circ$
and three possible orientations: horizontal, south-east
and south-west, according to whether the normal vector to the  square face of $\Sigma_L$ is
$(0,0,1)$, $(1,0,0)$ or $(0,1,0)$. 
The projection of $\Sigma_L$ gives therefore a lozenge tiling of $P_{111}$ with these
three types of tiles. 
Vertices of
the lozenges are the vertices of a triangular lattice $\mathcal
T_L$ of side $const./L$. Via a suitable choice of coordinates,
we will set the constant to be $1$ below.

\begin{definition}
A continuous monotone surface $\Sigma$ in $\bbR^3$ is a
two-dimensional connected
surface such that:
\begin{enumerate}
\item 
$\Sigma$ projects bijectively on  $P_{111}$;
\item   the normal vector to $\Sigma$, assumed to be
defined almost everywhere, points in $\bbR^3_{\ge 0}$. 
\end{enumerate}
\end{definition}
Note that injectivity of the orthogonal projection of $\Sigma$ on $P_{111}$ is
 a consequence of the assumption on the normal vector.

\begin{definition}[Height function] To a continuous (resp. stepped)
  monotone surface $\Sigma$ (resp. $\Sigma_L$) we associate a height
  function $\phi:P_{111}\to \mathbb R$ (resp. $h: \mathcal T_L\to
  \bbZ/L$
)
  , as follows: $\phi(u)$ (resp. $h(u)$) equals the height
  with respect to the horizontal plane of the point $p\in\Sigma$
  (resp. $p\in\Sigma_L$) whose orthogonal projection on $P_{111}$ is $u$.
\end{definition}
Note that, for discrete monotone surfaces, heights are associated to
vertices of lozenges, i.e. to vertices of $\mathcal T_L$. The
definition can be extended to obtain a real-valued height function $h$ on the whole
$P_{111}$, simply by establishing that the height is linear on
triangular faces of $\mathcal T_L$.

\begin{figure}
  \includegraphics[width=10cm]{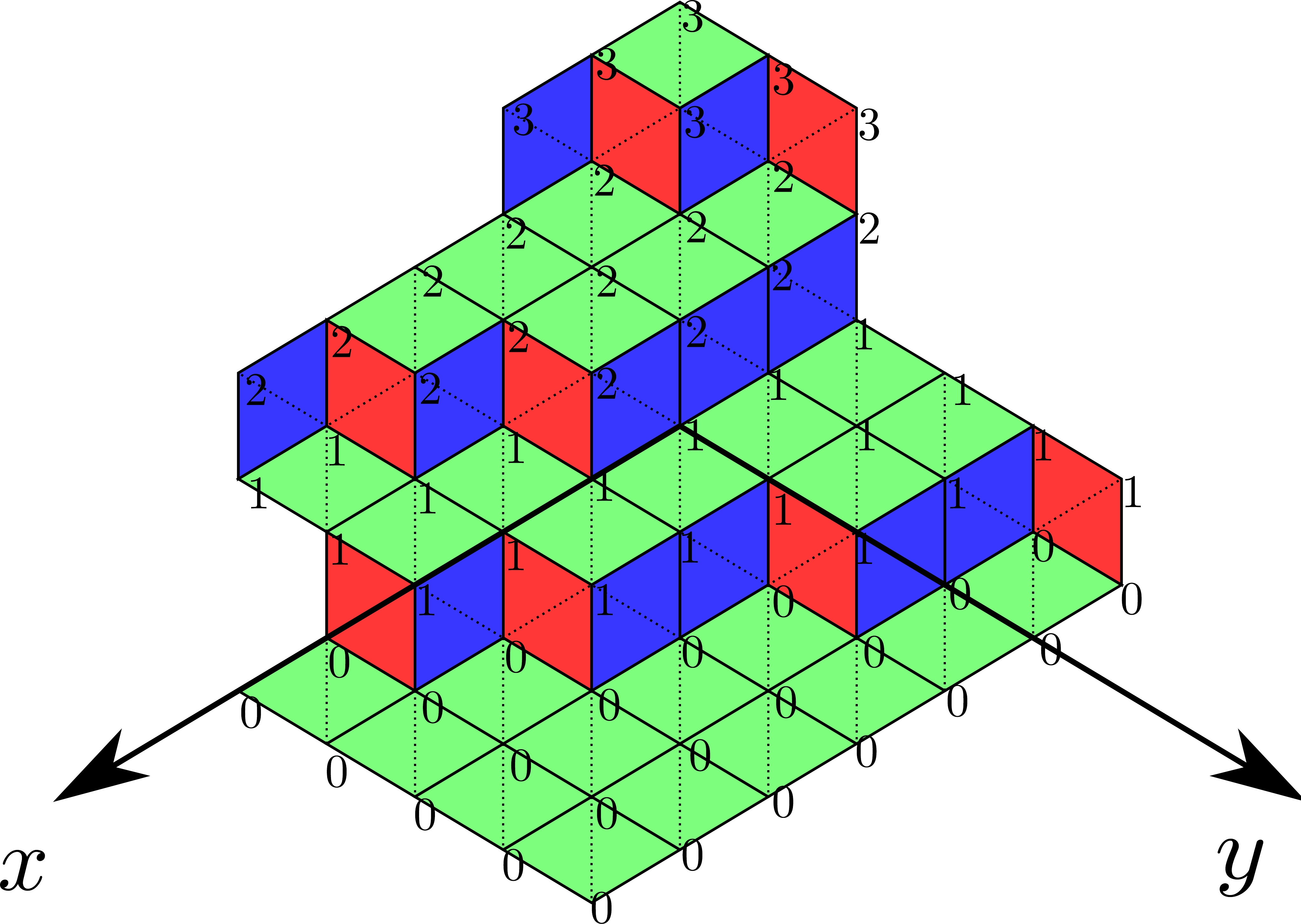}
\caption{A portion of lozenge tiling associated to a stepped monotone
  interface. Next to each vertex of $\mathcal T_L$ is marked  the
  value of the height function. The underlying triangular lattice is given by dotted lines.}
\label{fig:lozenges}
\end{figure}

\smallskip

On $P_{111}$
introduce a coordinate frame $(x,y)$ (see Figure \ref{fig:lozenges}) such that a
given reference vertex $v_0\in\mathcal T_L$ has coordinates $(0,0)$ and the
vertices of $\mathcal T_L$ that are nearest neighbors of $v_0$ 
in directions $e^{-5i\pi/6}$ (resp. $e^{-i\pi/6}$) have
coordinates $(1/L,0)$ (resp. $(0,1/L)$).
This choice of coordinates is convenient for stepped
monotone surfaces, since the axes are along two of the directions of
the triangular lattice $\mathcal T_L$. 
Note  also that the $x$ and
$y$ axes are along the  $P_{111}$ projections of the directions of the
usual $\hat e_1$ and $\hat e_2$ coordinate axes of $\bbR^3$.
Whenever convenient, we will implicitly identify the plane $P_{111}$
with $\bbR^2$.

For continuous monotone interfaces, the condition that the normal vector points
in $\bbR^3_{\ge 0}$ can be reformulated as follows: wherever defined, the gradient $\nabla
\phi=(\partial_x \phi,\partial_y \phi)$ belongs to $\mathbb T$, where:
\begin{definition}
\label{def:tri}
$\mathbb T\subset \bbR^2$ is the triangle with vertices $(0,0),(0,-1),(-1,0)$.
\end{definition}

\begin{remark}
\label{rem:hl}
When one moves
by one lattice step in $\mathcal T_L$ along the $x$ or $y$ directions 
the height function of a stepped interface decreases by $1/L$ if one
crosses a lozenge, and is unchanged if one moves along the edge of a
lozenge.  When instead one moves by a lattice step upward in the vertical direction (i.e. by $(-1/L,-1/L)$ in the $(x,y)$ coordinates),
the height function is unchanged if one
crosses a lozenge, and increases by $1/L$ if one moves along the edge of a
lozenge.  
\end{remark}
While a discrete height function uniquely identifies a
lozenge tiling, in view of Remark \ref{rem:hl} a lozenge tiling identifies the height
function $h$ only modulo a global additive constant (the tiling
identifies the height gradients). If however one
fixes the height at some vertex, then the correspondence is bijective. In the following,
the height along the boundary of a finite region will be fixed, so we will freely identify height
functions and lozenge tilings.

\begin{definition}[Domains] \label{def:domains} In the continuous surface setting, a
  domain $U$ will denote a simply connected, bounded, closed subset of
  $P_{111}$, whose boundary $\partial U$ is a piecewise $C^\infty$
  simple curve. 

  In the discrete setting, a discrete domain $U_L$ will be a simply
  connected, bounded union of closed triangular faces of $\mathcal
  T_L$. With some abuse of notation, we will often identify $U_L$ with
  $U_L\cap\mathcal T_L$.  The set of sites $v\in U_L\cap\mathcal T_L$
  that are not on $\partial U_L$ is denoted $U_L^{int}$, while
  $U^{ext}_L=\mathcal T_L\setminus U^{int}_L$.

\end{definition}

\begin{definition}[Non-extremal monotone surface]\label{def:strictly}
  Let $\Sigma$ be a continuous monotone surface, with height function
  $\phi$, and $U$ be a domain of $P_{111}$. We say that $\Sigma$ is 
  non-extremal in $U$ if
  $\nabla \phi$ is defined everywhere  in $\stackrel \circ U$ (the
  interior of $U$) and   there exists 
$\epsilon>0$ such that,  for every $(x,y)\in \stackrel \circ U$, $\nabla \phi$  is at
distance at least $\epsilon$ from the boundary of the triangle
$\mathbb T$.
\end{definition}
In geometric terms, this means that all three components of the normal vector to $\Sigma$ 
are larger than a constant times $\epsilon$, at
every point that projects on $\stackrel \circ U$.

\begin{definition}[Continuous boundary heights] 
\label{def:cbh}
Given a domain $U\subset P_{111}$,
a function  $\varphi: P_{111}\setminus U\mapsto 
 \mathbb R$ is called \emph{(continuous) boundary height} if there exists
a continuous monotone surface $\Sigma$ whose
height function $\phi$ coincides with $\varphi$ on $P_{111}\setminus U$.
\end{definition}
Discrete boundary heights are defined similarly:
\begin{definition}[Discrete boundary heights]
\label{def:dbh}
Given a discrete domain $U_L$ as in Definition \ref{def:domains}, 
  we call $\varphi_L:U_L^{ext}\mapsto \bbZ/L$ a \emph{discrete boundary height}
if there exists a stepped monotone surface $\Sigma_L$ whose height coincides
with $\varphi_L$ on 
$ U_L^{ext} $.
\end{definition}

\begin{remark}
Boundary heights have been defined for technical reasons as height functions
outside certain (continuous or discrete) domains.  However, with some abuse of notation, we will often  see
$\varphi_L$ and $\varphi$ as functions on $\partial U_L$ and $\partial
U$, respectively (instead of functions on $U^{ext}$ and
$P_{111}\setminus U$). This makes sense because we will see (DLR
equations below) that the statistical properties of the height
function in a domain $U_L$ are determined uniquely by the height on
$\partial U_L$.

\end{remark}

\smallskip

We will be mostly interested in stepped monotone surfaces that approximate
as $L\to\infty$ a continuous monotone surface:
\begin{definition}[Discretizations] Consider a continuous monotone
  surface $\Sigma$, a domain $U$ and, for $L\ge1$, stepped
  monotone surfaces $\Sigma_L$ and discrete domains $U_L$. We say that $(\Sigma_L,U_L)_{L\ge1}$ is a
  discretization of $(\Sigma,U)$ if, for some constant $C$ independent
  of $L$:
  \begin{enumerate}
  \item the boundary $\partial U_L$ is within Hausdorff distance
  $C/L$ from $\partial U$;
\item  for every $u\in U_L\cap U $,
  one has $|h(u)-\phi(u)|\le C/L$. 
  \end{enumerate}
The restriction $\varphi_L$ of $h$ (the height function of $\Sigma_L$)
  to $ U_L^{ext}$ is said to be a discretization of the boundary
  height $\varphi=\phi|_{P_{111}\setminus  U}$.
\end{definition}
Given $(\Sigma,U)$, one can always
find a discretization $(\Sigma_L,U_L)_{L\ge1}$: just take $\Sigma_L$
as the boundary of the
union of all closed cubes with vertices in $(\bbZ/L)^3$ that are below $\Sigma$, and $U_L$
as the union of triangular faces of $\mathcal T_L$ contained in $U$.

\subsection{Uniform measure, DLR equations and macroscopic shape}

\label{sec:macro}

Given a discrete domain $U_L$ and a discrete boundary height
$\varphi_L$ as in Definition \ref{def:dbh}, we let
$\pi_{U_L}^{\varphi_L}$ denote the uniform measure over the set 
$\Omega_{U_L,\varphi_L}$ of all stepped
monotone surfaces whose height on
$ U_L^{ext}$ is $\varphi_L$ (by definition, there is at least one of
them). 

The measure $\pi_{U_L}^{\varphi_L}$ satisfies the so-called DLR
equations. If $V_L$ is a sub-domain of $ U_L$, then under the
law $\pi_{U_L}^{\varphi_L} $, conditioned to the event that the
height on $ V_L^{ext}$ is a certain boundary height $\psi_L$, the
height function in $V_L$ has the uniform law
$\pi_{V_L}^{\psi_L}$.

The following well-known theorem states that, if the boundary
condition $\varphi_L$ is the discretization of a continuous boundary
height $\varphi$, with
high probability under the uniform measure $\pi_{U_L}^{\varphi_L}$ the stepped interface
$\Sigma_L$ approximates a certain macroscopic 
shape $\bar\phi$, that
solves a variational principle.
\begin{theorem}\cite{CKP}
\label{th:macro}
Let $U$ and $\varphi$ be a domain and a continuous
boundary height, satisfying the properties specified in Definitions 
\ref{def:domains} and \ref{def:cbh}.
  \begin{enumerate}
  \item   There exists a unique minimizer $\bar \phi$, 
 among continuous monotone surfaces with boundary height $\varphi$,
 of the surface tension functional
\begin{eqnarray}
  \label{eq:functional}
  \Psi(\phi)=\int_U \sigma(s(u),t(u))d^2u=\int_U \sigma(\nabla \phi)d^2u
\end{eqnarray}
where
\begin{eqnarray}
  \label{eq:sigma}
  \sigma(s,t)=-\frac1\pi\left[
\Lambda(\pi s)+\Lambda(\pi t)+\Lambda(\pi(-1-s-t))
\right]
\end{eqnarray}
and
\[
\Lambda(\theta)=-\int_0^{-\theta} \log (2 \sin(t))dt
\]
(observe that $-\theta\ge 0$, since $s,t,-1-s-t\le0$ if $(s,t)\in \tri$).

\item Let $(\varphi_L, U_L)_{L\ge1}$ be a discretization of the boundary condition 
$(\varphi, U)$ and 
$\Sigma_L$ be distributed according to the uniform measure $\pi^{\varphi_L}_{U_L}$. Then, as $L\to\infty$, 
$\Sigma_L$ tends in $\pi_{U_L}^{\varphi_L}$-probability to $\bar \phi$: for every
$\epsilon>0$, 
\[
\pi^{\varphi_L}_{U_L}\left(
\exists u\in U_L: |h(u)-\bar\phi(u)|\ge \epsilon
\right)\to0.
\]
  \end{enumerate}
\end{theorem}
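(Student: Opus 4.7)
The plan is to handle the two parts of the theorem separately.

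For part (1), I would apply the direct method of the calculus of variations. The admissible class $\mathcal A_\varphi$ of continuous monotone surfaces with boundary height $\varphi$ consists of Lipschitz functions $\phi:U\to\mathbb R$ with $\nabla\phi\in\tri$ almost everywhere and $\phi|_{\partial U}=\varphi$. This class is convex, non-empty by Definition \ref{def:cbh}, and uniformly Lipschitz, hence compact in $C(U)$ by Arzelà--Ascoli. The surface tension $\sigma$ is convex on $\tri$ and strictly convex on $\inte\tri$, as can be checked by a direct computation of the Hessian of \eqref{eq:sigma}; therefore $\Psi$ is convex and lower semicontinuous in the uniform topology, and existence of a minimizer follows from the standard compactness argument. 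Uniqueness reduces to rigidity of frozen regions: by strict convexity, any two minimizers must coincide wherever at least one has gradient in $\inte\tri$, while on the complementary frozen subdomain the gradient lies on $\partial\tri$ and the corresponding level lines are rigidly forced by the boundary height $\varphi$.

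For part (2), the strategy is a large deviation principle with rate $L^2\Psi$. The key combinatorial input is a \emph{bulk entropy formula}: for every $(s,t)\in\inte\tri$, the number of tilings of a parallelogram of area $A$ with affine boundary condition of slope $(s,t)$ grows as $\exp(-L^2 A\,\sigma(s,t)+o(L^2))$. This is established via the bijection between lozenge tilings and dimer coverings of the honeycomb graph and Kasteleyn's determinantal formula, whose asymptotics produce the explicit expression \eqref{eq:sigma} through reduction to a one-dimensional integral of $\log|2\sin|$ that yields the Lobachevsky function $\Lambda$.

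To promote this local count to a global one, I would cover $U$ with a mesoscopic grid of side $\eta$ satisfying $1/L\ll\eta\ll 1$, replace a candidate profile $\phi$ on each tile by its tangent plane, and apply the bulk entropy formula tile by tile; together with concavity of $\sigma$ this produces the upper bound $|\Omega_{U_L,\varphi_L}|\le\exp(-L^2\min_{\mathcal A_\varphi}\Psi+o(L^2))$, while a matching lower bound at $\bar\phi$ comes from explicit constructions of tilings that match the tangent planes of $\bar\phi$ tile by tile. The hardest technical point is the \emph{patching step}: local tilings with slightly different slopes on adjacent tiles must be glued into a single tiling with the prescribed boundary $\varphi_L$ at entropy cost only $o(L^2)$. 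This is handled by interpolating tangent planes on narrow transition strips in such a way that intermediate slopes stay in $\tri$, and using convexity of $\sigma$ to control the interpolation cost; an additional subtlety is uniformity of the bulk entropy estimate as $(s,t)$ approaches $\partial\tri$, where frozen micro-regions can appear. Combining the upper bound for profiles at uniform distance $\ge\epsilon$ from $\bar\phi$ (strictly suboptimal by compactness and uniqueness) with the lower bound at $\bar\phi$, one obtains
\[
\pi^{\varphi_L}_{U_L}\!\left(\sup_{u\in U_L}|h(u)-\bar\phi(u)|\ge\epsilon\right)\le \exp(-c(\epsilon)L^2)
\]
for some $c(\epsilon)>0$, which is much stronger than the stated convergence in probability.
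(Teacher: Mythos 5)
This theorem is quoted in the paper from \cite{CKP} without proof, so there is no in-paper argument to compare against; your proposal is a reasonable high-level sketch of the CKP strategy (direct method for the variational problem, then a local-to-global large-deviation count combining bulk Kasteleyn asymptotics with a mesoscopic patching argument). There is, however, a real gap in your uniqueness argument for part (1). Strict convexity of $\sigma$ on $\inte\tri$ does show that any two minimizers $\phi_1,\phi_2$ have equal gradients a.e.\ on the set where at least one gradient lies in $\inte\tri$. But on the remaining (frozen) set $\sigma$ is affine --- in fact identically zero --- along each edge of $\partial\tri$, so a convex combination of $\phi_1$ and $\phi_2$ incurs no extra energy there and strict convexity gives you nothing. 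Your sentence about level lines being ``rigidly forced by the boundary height'' asserts the conclusion rather than proving it: you cannot conclude $\phi_1=\phi_2$ without showing that equality of gradients on the liquid region, the fixed boundary data, and the constraint $\nabla\phi\in\tri$ together determine the surface on the frozen region as well. In \cite{CKP} this is handled by a $\max/\min$ comparison (both $\max(\phi_1,\phi_2)$ and $\min(\phi_1,\phi_2)$ remain admissible and their energies add up to $\Psi(\phi_1)+\Psi(\phi_2)$, so one may assume $\phi_1\le\phi_2$) followed by a propagation/comparison argument, and your sketch should supply some version of this step.

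A smaller technical point: lower semicontinuity of $\Psi$ on $C(\bar U)$ does not follow simply from convexity of $\sigma$, because $\Psi$ depends on gradients rather than on values. You need to pass through weak-$*$ convergence of $\nabla\phi_n$ in $L^\infty$ (available along subsequences by the uniform Lipschitz bound) and then invoke convexity of the integrand to obtain weak lower semicontinuity. For part (2) your outline --- bulk entropy for affine boundary data via Kasteleyn, a mesoscopic grid, and a patching step handling slopes near $\partial\tri$ --- is the right one, and the exponential rate $e^{-c(\epsilon)L^2}$ you announce is indeed what \cite{CKP} establish, which is stronger than the convergence in probability stated in the theorem.
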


The Euler-Lagrange equation associated to the variational principle is
 the
non-linear elliptic PDE in ``divergence form'' 
\begin{multline}
  \label{eq:PDE}
\mathcal L\bp:=
\partial_x(\partial_s\sigma(\nabla \bar \phi))+\partial_y(\partial_t\sigma(\nabla\bar\phi))
\\
=a_{11}(\nabla \bar\phi)\partial^2_x\bar\phi+a_{22}(\nabla \bar\phi)\partial^2_y\bar\phi
+2a_{12}(\nabla \bar\phi)\partial^2_{x,y}\bar\phi=0
\end{multline}
with 
\begin{align}
\label{eq:aij}
  a_{11}(\nabla \bar\phi)= \frac1{\tan(-\pi\partial_x\bar\phi)}+\frac1{
\tan(\pi(1+\partial_x\bar\phi+\partial_y\bar\phi))}\\
 a_{22}(\nabla \bar\phi)= \frac1{\tan(-\pi\partial_y\bar\phi)}+\frac1{
\tan(\pi(1+\partial_x\bar\phi+\partial_y\bar\phi))}\\
a_{12}(\nabla \bar\phi)=a_{21}(\nabla \bar\phi)=
\frac1{
\tan(\pi(1+\partial_x\bar\phi+\partial_y\bar\phi))}.
\end{align}
The matrix ${\bf a}(s,t)=\{a_{ij}((s,t)) \}_{i,j=1,2}$ is strictly positive
definite in $\stackrel \circ\tri$, as a consequence of strict convexity
of the surface tension functional $\Psi$ (positive definiteness can
also be checked by hand; in particular, the determinant of ${\bf a}(s,t)$ is $1$). In $\stackrel\circ\tri$ the
matrix elements  $a_{ij}((s,t))$ are analytic and the diagonal
elements $a_{ii}(s,t)$ are strictly positive. When instead $(s,t)$
approaches $\partial\tri$, the matrix ${\bf a}(s,t)$ becomes singular.

Assume that $\bp$ is non-extremal in $U$ (its gradient is bounded away from the boundary of the set of allowed slopes). Then, $\bp$
is real analytic in $\stackrel\circ U$
(see for instance  \cite[Ch. II.2 and Ch. VI.3]{Giaquinta}) and solves \eqref{eq:PDE} everywhere in $\stackrel\circ
U$.

It can however happen, even for some natural boundary conditions
$(U,\varphi)$ (see Section \ref{sec:hexa}), that in some subset
$\hat U\subset U$ with non-empty  interior the gradient $\nabla \bp$ belongs to
$\partial \tri$. Such regions $\hat U$ are called \emph{frozen regions}.

\section{Dynamics, conjectures and main result}

The Glauber dynamics is defined as a Markov process
$(h^\eta_t)_{t\ge0}$ on the set $\Omega_{U_L,\varphi_L}$, with $\eta$
denoting the initial condition.  To each site $v\in U_L^{int}$ such that all six neighbors of $v$ are in $U_L$, we associate a
mean-one Poisson clock. Clocks at different sites are
independent. When the clock at $v$ rings, if in the present lozenge
configuration $v$ belongs to 
exactly three lozenges, then we turn the three lozenges by an angle
$\pi$. See Figure \ref{fig:flip}.
In terms of height function, an update
corresponds to increasing by $+1/L$ or decreasing by $-1/L$ the height
$h^\eta_t(v)$ with rate $1$, with the constraint that $h^\eta_t$ remains a
stepped monotone surface in $\Omega_{U_L,\varphi_L}$ at all times.

We denote $\mu_t^\eta$ the law of $h_t^\eta$ and $\bbP$ the law of the
entire process.
The dynamics is reversible and its unique invariant measure is the
uniform measure $\pi_{U_L}^{\varphi_L}$.

As we mentioned in the introduction, the dynamics is expected to converge to equilibrium in
a time of order $L^2$ times sub-leading corrections. 
More precisely:
\begin{conjecture}
\label{th:idealmente}
Let   $U $, $\varphi$ and its discretizations
$(U_L,\varphi_L)_{L\ge1}$ be as above.
  For every $\delta>0$ 
  there exists $c(\delta)<\infty$ such that, whatever
  the initial condition $\eta$, at times
  $t>c(\delta) L^{2+\delta}$ the following holds with  probability tending to $1$ as $L\to\infty$:
for every vertex $v\in U_L$
\begin{eqnarray}
  \label{eq:6}
|h^\eta_t(v)-\bar \phi(v)|=o(1).
\end{eqnarray}

\end{conjecture}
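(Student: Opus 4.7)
The strategy I would pursue combines four ingredients: monotonicity of the dynamics, an auxiliary PDE-driven sandwich, local equilibrium on mesoscopic hexagonal patches, and the explicit asymptotic estimates of Petrov \cite{Petrov1,Petrov2}. Recall that the single-site Glauber updates are attractive for the natural pointwise partial order on $\Omega_{U_L,\varphi_L}$, so by the graphical construction the evolutions $(h_t^\eta)_{t\ge 0}$ from different initial conditions can be globally coupled in a monotone way. It therefore suffices to establish \eqref{eq:6} for the two extremal initial conditions $\eta^{\max}$ and $\eta^{\min}$ of $\Omega_{U_L,\varphi_L}$: any other $\eta$ is sandwiched between them and inherits the bound. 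Below I treat $\eta=\eta^{\max}$, the other case being symmetric.

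The plan is then to construct a deterministic one-parameter family of continuous surfaces $(\phi_\tau)_{\tau\ge 0}$ with boundary height $\varphi$, satisfying $\phi_0\ge \bar\phi$ (well above the initial macroscopic shape) and solving an approximate gradient-flow equation morally of the form \eqref{eq:5}, but slowed down by a factor $L^\epsilon$. For $\bar\phi$ non-extremal, the matrix ${\bf a}(\nabla\bar\phi)$ is uniformly elliptic, so standard parabolic theory guarantees $\phi_\tau\to\bar\phi$ uniformly on $U$ as $\tau\to\infty$, in a $\tau$-window of order one. Reading this back in microscopic time, where $\tau=t/L^{2+\epsilon}$, one hopes to prove by induction on discrete times $t_k=kL^{2+\epsilon/2}$ that
\[
  h^{\eta^{\max}}_{t_k}(v)\le \phi_{\tau_k}(v)+o(1)\qquad\forall v\in U_L
\]
with high probability. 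Combined with the analogous lower bound starting from $\eta^{\min}$, this yields \eqref{eq:6} at time $cL^{2+\delta}$, proving the conjecture.

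The inductive step $t_k\mapsto t_{k+1}$ is the technical heart. I would partition $U$ into mesoscopic patches $V_i$ of diameter $\ell=L^{-1/2+\delta}$. On $V_i$, the macroscopic height at time $t_k$ is approximately $\phi_{\tau_k}$, whose second-order Taylor expansion around the center of $V_i$, by Theorem \ref{th:inf2}, coincides with that of the macroscopic shape of some hexagon $\varhexagon_{a_i b_i c_i}$. Using the DLR equations and monotonicity, I would couple the law of $h^{\eta^{\max}}_t$ on $V_i$ during $[t_k,t_{k+1}]$ with an equilibrium hexagonal tiling having matching boundary slope and curvature, then invoke the determinantal asymptotics of \cite{Petrov1,Petrov2} to estimate, to accuracy $o(\ell^2)$, both the expected height drift and the fluctuations. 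A mesoscopic mixing-time bound of order $\ell^2 L^{2+o(1)}$ on $V_i$ (established separately by canonical paths or by Wilson's coupling argument \cite{Wilson}) ensures that within the time window $t_{k+1}-t_k$ the patch has essentially equilibrated given its boundary, justifying the replacement of dynamical height by equilibrium height.

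The main obstacle is the precision of the patch-level comparison. The match with a hexagon provided by Theorem \ref{th:inf2} is only second-order, yet the evolution \eqref{eq:5} is itself driven by second derivatives; hence the residual third-order error must decay strictly faster than $\ell^2$ in order not to swamp the drift when summed over the $\sim L^{1-2\delta}$ patches and the $\sim L^{\epsilon/2}$ time steps. Demonstrating this requires sharp Petrov-type fluctuation control in each hexagon together with an argument that non-hexagonal third-order boundary corrections do not accumulate coherently, a delicate point that (to my understanding) genuinely uses the choice $\delta$ small and the prefactor $L^\epsilon$ slow-down. A separate obstacle, and the reason the theorem in \cite{CMT} and in this paper is restricted to non-frozen shapes, is that ${\bf a}(s,t)$ is singular on $\partial\tri$: both the parabolic regularity of $(\phi_\tau)$ and the mesoscopic hexagon comparison degenerate near facets, and handling the slow motion of the frozen-region boundary seems to require a genuinely different input.
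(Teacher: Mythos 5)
Your high-level strategy matches the paper's: reduce via monotonicity, trap the interface below a slowly-decreasing deterministic barrier, and at each step run the mesoscopic dynamics to local equilibrium in a disk of radius $\approx L^{-1/2+\delta}$, where the equilibrium is compared against a hexagonal uniform measure via the second-order Taylor match of Theorem~\ref{th:inf2} and Petrov's asymptotics (Theorem~\ref{th:Petrov}, Proposition~\ref{th:Petrov2}), with the local mixing time controlled by Lemma~\ref{lemma:Wilson} using a floor/ceiling. This is essentially the proof of Theorem~\ref{th:pratica2}.

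Where you diverge is in the choice of the barrier. You propose a genuine one-parameter family $(\phi_\tau)$ solving a slowed-down parabolic equation of gradient-flow type and convergence $\phi_\tau\to\bar\phi$ proved by parabolic theory. The paper does something much more elementary: the barrier is the fixed ansatz $\bar\phi + (G-i\epsilon_L) + \kappa\,\epsilon_L\,\psi$ with $\psi(x,y)=\text{const}-e^{x/\xi}-e^{y/\xi}$ (Definition~\ref{def:psi}), and a double induction on $i$ (outer, shrinking the constant shift by $\epsilon_L$) and $j$ (inner, shrinking $\kappa$ from $1$ to roughly $1/\psi_{\max}$ over $\sim L$ steps of size $L^{1+\delta/4}$). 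Because $\psi$ is explicit and fixed, its Hessian is known in closed form, $\hat{\mathcal L}\psi<0$ pointwise by design, and the key inequality~\eqref{eq:there} in Proposition~\ref{th:equilibrio} follows by direct Taylor expansion — no PDE regularity, no convergence-rate lemma. This also resolves the accumulation worry you raise: there is no accumulation over patches or steps, because the barrier is reset to the same explicit ansatz at every $(i,j)$, and the third-order Taylor errors are $O(L^{-3/2+3\delta})\ll 1/L$, hence swallowed into the $O(1/L)$ terms at each step without any cancellation argument. If you pursued the PDE-barrier route you would additionally need a quantitative statement that the Hessian of $\phi_\tau$ stays close, in the sense required by Theorem~\ref{th:crazy1} and Proposition~\ref{th:bordobordo}, to that of $\bar\phi$ uniformly in $\tau$ — not impossible, but a genuine extra cost. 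Finally, note that the paper's mesoscopic dynamics must have \emph{both} a floor and a ceiling for Lemma~\ref{lemma:Wilson} to apply; the floor is then removed a posteriori (Lemma~\ref{lemma:floor}), a small but necessary step your sketch glosses over.
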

In other words, within time $L^{2+o(1)}$ the interface
macroscopically approximates
the equilibrium shape to any pre-assigned precision.

Actually, we believe that more should be true: at time
$L^{2+o(1)}$, the law $\mu_t^\eta$ should be very close to the
equilibrium measure  $\pi_{U_L}^{\varphi_L}$.
More precisely, define  the mixing time of the dynamics as
\begin{eqnarray}
  \label{eq:17}
  \tmix=\tmix(U_L,\varphi_L)=\inf\{t:\max_\eta\|\mu^\eta_t-\pi_{U_L}^{\varphi_L}\|\le 1/(2e)\},
\end{eqnarray}
with $\|\mu-\nu\|$ the total variation distance between two
probability measures $\mu,\nu$.
Then:
 \begin{conjecture}
 \label{th:idealmente2}
   In the same setting of Conjecture \ref{th:idealmente}, it is
expected that
   $\tmix=O(L^{2+o(1)})$.
 \end{conjecture}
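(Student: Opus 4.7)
The plan is to upgrade the macroscopic height convergence (Theorem \ref{th:inf1}) to the stronger total-variation statement by combining a monotone grand coupling with local equilibration inside mesoscopic blocks.

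First, I would use the monotone grand coupling on $\Omega_{U_L,\varphi_L}$ to sandwich all processes between the copies $h^+_t, h^-_t$ started from the maximal and minimal height function. By Theorem \ref{th:inf1}, at time $t_0 = L^{2+o(1)}$ both $h^+_{t_0}$ and $h^-_{t_0}$ are within sup-norm distance $o(1)$ from $\bp$ with high probability, so the pointwise gap $h^+_{t_0}(v)-h^-_{t_0}(v)$ is uniformly $o(1)$. By standard monotone coupling arguments, it suffices to show that, after an additional time of order $L^{2+o(1)}$, these two copies coalesce with high probability, since the coupling time bounds $\tmix$.

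Second, I would partition $U_L$ into a regular family of mesoscopic hexagonal blocks $B_i$ of linear size $\ell = L^{-1/2+\delta}$, with $\delta>0$ small. Inside each $B_i$, the boundary traces of $h^\pm_{t_0}$ are close, by the previous step, to an affine profile given by the first-order Taylor expansion of $\bp$ at the centre of $B_i$, and the non-frozen assumption guarantees that this affine slope lies in the interior of $\tri$. By the DLR equations the conditional law of $h^\pm$ in $B_i$ is uniform given its exterior, which places us in the local-equilibrium setting the introduction describes. I would then switch to a block dynamics whose elementary step resamples one $B_i$ from its equilibrium measure conditional on the current exterior. Using Peres-Winkler censoring, the block dynamics dominates the true Glauber dynamics in the order-of-mixing sense, so it is enough to bound its mixing time.

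Third, I would bound the block-chain mixing time as the product of the number of sweeps times the cost of each single-block resampling. A single block carries $\sim \ell^2 L^2$ vertices and an almost-affine non-frozen boundary height, so one expects its internal Glauber dynamics to mix in time $\ell^2 \, \poly(L)$, using Wilson-type bounds extended from exact hexagons to approximately hexagonal regions via Theorem \ref{th:inf2} (which provides a matching equilibrium reference measure up to second order, hence within the tolerance $L^{-3/2+3\delta} \ll 1/L$). Combined with an $O(1)$ block-chain spectral gap coming from the uniform ellipticity of $\mathcal L$ on non-frozen shapes, this yields a total time of order $\ell^2 \poly(L) \cdot \ell^{-2}\log L = L^{2+o(1)}$.

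The main obstacle is the third step. Wilson-type mixing-time bounds are stated for tilings of an exact hexagon with affine boundary, whereas here the blocks $B_i$ are only approximately hexagonal and carry a \emph{random}, non-equilibrium boundary inherited from $h^\pm_{t_0}$. Controlling how a perturbation of the boundary height of size $o(1)$ affects the single-block mixing time requires a quantitative local-equilibrium statement, in which the comparison with the hexagonal model provided by Theorem \ref{th:inf2} must be promoted from a statement about means to a statement about mixing times. I also expect the non-frozen hypothesis to play a crucial role in this step, by keeping the matrix $\mathbf a(\nabla\bp)$ uniformly positive definite, so that the comparison with hexagonal reference measures remains valid uniformly across blocks; if frozen regions are present, the divergence of $\mathbf a$ near $\partial \tri$ would likely force a genuinely different argument near the facet boundaries.
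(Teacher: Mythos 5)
The statement you are trying to prove is a \emph{conjecture} in the paper, not a theorem: the authors prove only the weaker Theorem~\ref{th:pratica2} (sup-norm convergence to $\bp$ with high probability), and they explicitly state that their methods establish Conjecture~\ref{th:idealmente2} only for a restricted class of boundary conditions (Theorem~\ref{th:praticamente}, $U\subset\Circle_{abc}$ with hexagonal-type boundary height), deferring even that proof to a forthcoming publication. So there is no paper proof to compare yours against, and the question to ask of your outline is whether it closes the gap the authors themselves leave open. It does not.

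The crucial missing step is already visible in your Step~1. Theorem~\ref{th:pratica2} gives $\sup_v|h^\pm_{t_0}(v)-\bp(v)|\le\epsilon_L$ with $\epsilon_L=O(1/\log L)$, i.e. a gap of order $\epsilon_L L$ in lattice units. To turn this into a total-variation bound one needs the two extremal chains to \emph{coalesce}. Feeding $\epsilon_L=1/\log L$ into the only quantitative coalescence tool available, Lemma~\ref{lemma:Wilson} with floor $\bp-\epsilon_L$ and ceiling $\bp+\epsilon_L$, gives a coupling time $O(L^2(\epsilon_L L)^2(\log L)^2)=O(L^4)$, not $L^{2+o(1)}$. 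To win, one needs $\epsilon_L=L^{-1+o(1)}$, i.e. finite-$L$ control of the mean height to lattice precision. In the paper this precision exists only for hexagonal regions, via Petrov's determinantal analysis (Theorem~\ref{th:Petrov}); there is no analog for generic domains and boundary heights, and this is precisely why Theorem~\ref{th:praticamente} is restricted to $U\subset\Circle_{abc}$.

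Your Steps~2--3 try to substitute a block-dynamics argument, but two of the ingredients you invoke are not available. First, Theorem~\ref{th:inf2} matches second-order Taylor expansions of \emph{macroscopic shapes}; it is used in the paper to transfer \emph{equilibrium} estimates (Proposition~\ref{th:equilibrio}), and there is no statement in the paper that upgrades it to a comparison of mixing times of two chains with different (and random) boundary data. Second, the claim that uniform ellipticity of $\mathbf a(\nabla\bp)$ yields an $O(1)$ spectral gap for the block chain is an assertion with no support here: block-dynamics gap bounds for monotone surfaces would require decay-of-correlations input or a recursive scheme that the paper does not develop. You correctly flag the third step as the main obstacle; what you have written is a plausible research plan rather than a proof, and the obstacle you identify is exactly why the authors stop at Theorem~\ref{th:pratica2} and can push to $\tmix$ only where Petrov's sharp asymptotics are available.
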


Thanks to the classical inequality
\begin{eqnarray}
  \label{eq:18}
\max_\eta  \|\mu_t^\eta-\pi_{U_L}^{\varphi_L}\|\le e^{-\lfloor t/\tmix\rfloor},
\end{eqnarray}
this would say that, at time of order $L^{2+o(1)}\log (1/\delta)$,
$\mu_t^\eta$ is within variation distance $\delta$ from equilibrium, for
any arbitrary $\delta$.

\medskip

Our main results is a proof of Conjecture \ref{th:idealmente}  under
the assumption that the macroscopic shape has no frozen regions
(Theorem \ref{th:pratica2}).
As mentioned at the end of Section \ref{sec:hexa}, the methods we
develop in this work allow also to prove the stronger Conjecture
 \ref{th:idealmente2} for a rather special class of boundary conditions
 (Theorem \ref{th:praticamente}; details will be given in a
 forthcoming publication).
\begin{theorem} 
\label{th:pratica2} Let the domain $U$ and the boundary condition
$\varphi$ satisfy the assumptions of Definitions \ref{def:domains} and \ref{def:cbh}. Assume in addition that the
associated macroscopic shape $\bar\phi$ is non-extremal in $U$ and let $(\varphi_L, U_L)_{L\ge1}$ be a discretization of 
$(\varphi,U)$. Consider the Glauber dynamics in $U_L$ with boundary height
$\varphi_L$ and initial condition $\eta$.  There exists a sequence $\epsilon_L$ tending to zero and, for each $\delta>0$,  a constant $c(\delta)<\infty$ such that, with $T_L=c(\delta)L^{2+\delta}$,
\begin{eqnarray}
  \label{eq:provaw}
  \max_\eta\sup_{t>T_L} \mathbb P(\exists v\in U_L: |h^\eta_t(v)-\bar \phi(v) |\ge \epsilon_L)\to 0
\quad\text{ as} \quad L\to\infty.
\end{eqnarray}
\end{theorem}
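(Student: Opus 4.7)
The Glauber dynamics on lozenge tilings admits a monotone coupling: if $\eta\le \eta'$ pointwise, then $h^\eta_t\le h^{\eta'}_t$ for all $t\ge0$ under an appropriate coupling. Hence it suffices to prove the two one-sided bounds $h^\eta_t(v)\le \bp(v)+o(1)$ and $h^\eta_t(v)\ge \bp(v)-o(1)$ (with high probability, uniformly in $v\in U_L$) when $\eta$ is, respectively, the maximal and the minimal element of $\Omega_{U_L,\varphi_L}$. The plan is to sandwich the random height function between two deterministic smooth auxiliary surfaces $\phi^\pm_\tau$ that evolve on the slightly sub-diffusive time scale $\tau=t/L^{2+\epsilon}$ for some $\epsilon\in(0,\delta)$, and that both converge to $\bp$ as $\tau\to\infty$.

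\textbf{Construction of the auxiliary flow.} I would let $\phi^\pm_\tau$ solve an auxiliary deterministic PDE of the form
\[
\partial_\tau \phi=\mu(\nabla\phi)\,\mathcal L\phi
\]
in $\inte U$, with prescribed boundary height $\varphi$ and with smooth initial data sitting strictly above (respectively strictly below) every monotone surface in $\Omega_{U_L,\varphi_L}$. Since $\bp$ is non-extremal, the parabolic maximum principle together with standard regularity theory for quasilinear uniformly elliptic operators guarantees that $\phi^\pm_\tau$ remain non-extremal on compact subsets of $\inte U$, are smooth there, and converge to $\bp$ as $\tau\to\infty$. The problem then reduces to proving, with high probability,
\[
h^{\eta^{\max}}_{L^{2+\epsilon}\tau}(v)\le \phi^+_\tau(v)+o(1),\qquad h^{\eta^{\min}}_{L^{2+\epsilon}\tau}(v)\ge \phi^-_\tau(v)-o(1)
\]
for every $v\in U_L$ and every $\tau$ in a bounded interval of length $\asymp L^{\delta-\epsilon}$; since the auxiliary flow converges to $\bp$ as $\tau\to\infty$, this yields the target bound at time $T_L=c(\delta)L^{2+\delta}$, and a further Markov-style iteration extends it to all later times.

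\textbf{Mesoscopic drift and hexagon comparison.} To propagate the sandwich bound forward in time, I would discretize macroscopic time into steps of length $\Delta\tau$ chosen so that the physical time increment $\Delta t=L^{2+\epsilon}\Delta\tau$ comfortably exceeds the expected relaxation time on a mesoscopic ball $B_\ell(u)$ of radius $\ell=L^{-1/2+\delta'}$ (with $\epsilon\ll\delta'\ll\delta$), yet is much smaller than the macroscopic scale. At each time step, on each such patch, I would freeze the height on $\partial B_\ell(u)$ via the DLR equations, invoke Theorem \ref{th:inf2} to match $\phi^+_\tau$ near $u$ to second order with the limit shape of a uniform random tiling of a hexagon $\varhexagon_{a(u),b(u),c(u)}$, and then combine (i) mesoscopic relaxation estimates for the Glauber dynamics in non-frozen regions (along the lines of \cite{CMT,Wilson}) with (ii) the sharp asymptotics for mean height and height fluctuations in hexagons derived from the determinantal representation of \cite{Petrov1,Petrov2}. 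These ingredients together imply that the local average height in $B_\ell(u)$ moves during $\Delta\tau$ by
\[
\Delta\tau\,\mu(\nabla \phi^+_\tau(u))\,\mathcal L\phi^+_\tau(u)+o(\Delta\tau),
\]
which matches the deterministic auxiliary evolution. A martingale concentration argument around this drift, combined with monotonicity to prevent the sandwich from collapsing, propagates the inequality from one time step to the next; a union bound over the $O(L^{1-2\delta'})$ patches and the $O(L^{\delta-\epsilon}/\Delta\tau)$ steps then closes the argument.

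\textbf{Principal obstacle.} The crux of the proof is the mesoscopic drift estimate: one must show that, after a time $\Delta t$, the height in $B_\ell(u)$ is close in law to the uniform measure with the current (random) boundary data, and that under this equilibrium the mean height and the imbalance of up- versus down-flip rates are controlled to precision much better than $L^{-\epsilon}$, which is the size of the deterministic velocity of the auxiliary flow. This is exactly the role of Theorem \ref{th:inf2} and of the hexagon asymptotics of \cite{Petrov1,Petrov2}: hexagons are essentially the only family of bounded domains in which such sharp equilibrium estimates are available, and the second-order matching is precisely accurate enough because third-order discrepancies between $\bp$ and the hexagon shape contribute only $O(L^{-3/2+3\delta'})\ll L^{-1}$ on scale $\ell$. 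The non-frozen hypothesis enters crucially here: in a frozen region the surface tension $\sigma$ is singular, $\mathcal L$ loses uniform ellipticity, and no hexagon with the prescribed local structure exists, so both the auxiliary PDE and the local equilibrium comparison break down.
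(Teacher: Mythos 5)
Your proposal shares the paper's key structural ideas: monotonicity to reduce to one-sided bounds from extremal initial conditions, propagation on a slightly sub-diffusive time scale, mesoscopic patches of radius $L^{-1/2+o(1)}$, second-order matching to hexagonal limit shapes via Theorem~\ref{th:inf2}, and Petrov's equilibrium asymptotics. You also correctly identify why the non-frozen hypothesis is essential. There is, however, a genuine gap in your construction of the auxiliary flow, and it is serious enough that the route as written does not close. You propose to take $\phi^\pm_\tau$ as solutions of the nonlinear parabolic equation $\partial_\tau\phi=\mu(\nabla\phi)\mathcal L\phi$ with boundary height $\varphi$ and smooth initial data sitting strictly above (resp.\ below) every element of $\Omega_{U_L,\varphi_L}$. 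These two requirements are incompatible: a surface strictly above the maximal configuration must exceed $\varphi$ by a fixed amount on $\partial U$, so the parabolic problem has a discontinuity at $\tau=0$ on the lateral boundary. Worse, the maximal configuration has slope on $\partial\tri$, so any ordered smooth initial datum has slopes on or arbitrarily near $\partial\tri$; there the coefficients $a_{ij}$ of $\mathcal L$ blow up, the equation is not uniformly parabolic, and the ``standard regularity theory'' you invoke does not apply. Proving existence, long-time convergence to $\bp$, and persistence of non-extremality for this degenerate quasilinear flow would be a substantial project in its own right. The paper deliberately avoids all of it: instead of solving the PDE, it uses the \emph{explicit} one-parameter family of comparison surfaces $\gamma_{i,j}=G-(i+2)\epsilon_L+\bp+\epsilon_L(1-j/L)\psi$ with $\psi(x,y)=\psi(0,0)-e^{x/\xi}-e^{y/\xi}$. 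The only analytic input needed is that the linearized operator $\hat{\mathcal L}$ of \eqref{eq:linearizzato} satisfies $\hat{\mathcal L}\psi<0$ pointwise in $U$, which is immediate because the Hessian $H^\psi$ is diagonal with strictly negative entries and $\xi$ is small. The boundary-layer problem disappears because the bound $h_t\le\gamma_{i,j}$ is deterministically true within $O(\epsilon_L^2)$ of $\partial U$ by the Lipschitz property (Remark~\ref{rem:UV}), so no boundary condition is ever imposed on the comparison surface.

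A second, smaller, deviation concerns how the inequality is propagated across a time step. Your ``martingale concentration argument around the drift'' is vague, and it is not clear how you would control the fluctuation of the empirical flip-rate imbalance to precision better than the deterministic velocity of the flow. The paper does something cleaner and avoids any pathwise concentration: conditioning on the good event at time $T_{i,j-1}$, one may by monotonicity couple the true dynamics to a dynamics with \emph{ceiling} $\gamma^{(L)}_{i,j-1}$ and an auxiliary floor, both at distance $L^{-1+o(1)}$; Lemma~\ref{lemma:Wilson} then shows this constrained dynamics on the mesoscopic disk $D_u$ mixes in time $L^{1+\delta/8}\ll T_{i,j}-T_{i,j-1}=L^{1+\delta/4}$, so the bound at time $T_{i,j}$ becomes a purely static equilibrium estimate (Proposition~\ref{th:equilibrio}), closed by a union bound over sites and steps. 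This is where the ceiling/floor machinery of Section~\ref{sec:monoton} is indispensable, and it replaces the stochastic drift computation your plan leaves unspecified.
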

It is important to emphasize that, except for the $\delta$ in the
exponent of $T_L$, this result is optimal. Indeed,
it is known that there exist initial conditions $\eta$
such that,  for times smaller than $a L^2$
with $a>0$ small (how small, depending on the domain $U$ and on the
boundary height $\varphi$), $\max_v |h^\eta_t(v)-\bar \phi(v)|$ is still bounded
away from zero. This is proven in \cite[Section 10]{dominos} in the
special case where the macroscopic shape $\bp$ is flat, but the proof extends with minor
modifications to the case of non-extremal $\bp$ considered here.

\smallskip

A slight generalization of Theorem \ref{th:pratica2} is the following:
\begin{corollary}
\label{cor:quasifrozen}
The same statement as in Theorem \ref{th:pratica2} holds without the
assumption that $\bp$ is non-extremal, if the following holds:
there exists a sequence
 $(\varphi^{(n)})_{n\ge1}$ of  continuous boundary heights
  on $P_{111}\setminus U$ such that:
  \begin{itemize}
  \item $\max_{x\in\partial U}|\varphi(x)-\varphi^{(n)}(x)|$ tends to zero as 
$n\to\infty$;
  \item for every $n$, the macroscopic shape $\bp^{(n)}$ corresponding to 
boundary conditions $(U,\varphi^{(n)})$ is non-extremal.
  \end{itemize}

\end{corollary}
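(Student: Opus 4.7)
The plan is to apply Theorem \ref{th:pratica2} to two sandwiching boundary heights whose macroscopic shapes are non-extremal, and then send these heights to $\varphi$ via a diagonal argument.

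First, for each $n\geq 1$, I would set $\delta_n:=\max_{x\in\partial U}|\varphi(x)-\varphi^{(n)}(x)|$ and consider the vertical translates $\varphi^{(n,\pm)}:=\varphi^{(n)}\pm\delta_n$. These are continuous boundary heights with $\varphi^{(n,-)}\leq \varphi\leq \varphi^{(n,+)}$ on $\partial U$, and their macroscopic shapes are $\bp^{(n,\pm)}=\bp^{(n)}\pm\delta_n$; since $\nabla\bp^{(n,\pm)}=\nabla\bp^{(n)}$ is bounded away from $\partial\tri$, they are non-extremal. The maximum principle for the strictly convex variational problem \eqref{eq:functional}, obtained by the standard $\min/\max$ rearrangement (noting that the pointwise minimum and maximum of two monotone surfaces are again monotone), forces $\bp^{(n,-)}\leq \bp\leq \bp^{(n,+)}$, hence $\norm{\bp-\bp^{(n)}}_{\infty}\leq \delta_n\to 0$ as $n\to\infty$.

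Second, I would fix a single discrete domain $U_L$ and discretizations $\varphi_L^{(n,\pm)}$ of $\varphi^{(n,\pm)}$ on it such that $\varphi_L^{(n,-)}\leq \varphi_L\leq \varphi_L^{(n,+)}$ pointwise on $U_L^{ext}$ (enlarging $\delta_n$ by an $O(1/L)$ amount to absorb discretization roundoff does not affect the limits). Starting the $\pm$ dynamics from the maximal, resp.\ minimal, element of $\Omega_{U_L,\varphi_L^{(n,\pm)}}$, the standard monotone coupling of Glauber dynamics on lozenge tilings yields the almost-sure sandwich
\[
h_t^{(n,-)}(v)\leq h_t^\eta(v)\leq h_t^{(n,+)}(v),\qquad t\geq 0,\ v\in U_L,
\]
for \emph{every} initial condition $\eta$ of the original dynamics. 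Applying Theorem \ref{th:pratica2} to each side yields, for every fixed $n$ and every $\delta>0$, a constant $c(\delta)<\infty$ and a sequence $\epsilon_L^{(n)}\to 0$ such that, with $T_L=c(\delta)L^{2+\delta}$,
\[
\max_\eta \sup_{t>T_L}\bbP\bigl(\exists v\in U_L:\ |h_t^{(n,\pm)}(v)-\bp^{(n,\pm)}(v)|\geq \epsilon_L^{(n)}\bigr)\to 0.
\]
Combining this with the sandwich and the bound $|\bp^{(n,\pm)}(v)-\bp(v)|\leq 2\delta_n$ gives $|h_t^\eta(v)-\bp(v)|\leq 2\delta_n+\epsilon_L^{(n)}$ with probability $1-o_L(1)$, uniformly in $\eta$ and in $t>T_L$.

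To conclude, I would run a standard diagonal extraction: choose $n(L)\to\infty$ slowly enough that $\epsilon_L^{(n(L))}\to 0$ and the probability bound above remains $o(1)$, and set $\epsilon_L:=2\delta_{n(L)}+\epsilon_L^{(n(L))}$; then \eqref{eq:provaw} follows. I do not anticipate serious obstacles: the two mildly delicate points are the maximum principle via $\min/\max$ rearrangement (routine, using strict convexity of $\Psi$ and stability of the monotone class under pointwise min/max) and the compatibility of the monotone coupling with the discretization of the boundary data (also routine, and already used elsewhere in the paper).
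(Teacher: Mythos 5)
Your proof is correct and takes essentially the same route as the paper: add a global vertical shift to $\varphi^{(n)}$ so as to sandwich $\varphi$, use monotonicity of the Glauber dynamics and of macroscopic shapes in the boundary data to reduce to the non-extremal boundary heights $\varphi^{(n,\pm)}$, apply Theorem \ref{th:pratica2}, and close with a diagonal choice $n=n(L)\to\infty$. The only cosmetic differences are that the paper treats the upper and lower bounds separately with one-sided shifts (and leaves the comparison $\bp\leq\tilde\phi^{(n)}\leq\bp+2\epsilon^{(n)}$ implicit), whereas you run both sides at once and make explicit the $\min/\max$ rearrangement argument behind the maximum principle for $\Psi$.
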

In other words, the claim of Theorem \ref{th:pratica2} holds if $\bp$
can be approximated by a sequence of non-extremal macroscopic shapes.
A typical application is given in Section \ref{rem:ellisse} below.

\subsection{Previous results} 
\label{sec:previous}
The first mathematical estimate we are aware of on the relaxation time
of the Glauber dynamics for lozenge tilings is in the work of Luby,
Randall and Sinclair \cite{LRS}, who proved \emph{rapid mixing}: the
mixing time grows \emph{at most} like some polynomial of the
graph-distance diameter of $U_L$ (i.e.  $\tmix\le L^C$, with our
notations). The estimate they obtained on $C$ was far from the
expected optimal value $2$, but we emphasize that their result
required essentially no conditions on the boundary height (in
particular, the possible presence of frozen regions in the macroscopic
shape played no role at all).  A few years later, D. Wilson
\cite{Wilson} proved the $O(L^2\log L)$ scaling for $\tmix$, but for
an \emph{ad-hoc} modified, highly non-local Markov dynamics introduced
in \cite{LRS}, whose updates can modify the position of  an unbounded number of
lozenges at the same time. Via known comparison arguments for Markov chains
\cite{DSC}, Wilson's result implies again \cite{RT} a non-optimal
polynomial upper bound $\tmix=O(L^6\log L)$ for the mixing time of the
local Glauber dynamics (actually the log factor can be removed by going through the spectral gap of the
non-local chain, see
\cite[Section 5]{Wilson}).

Both \cite{LRS} and \cite{Wilson} are based on clever path-coupling
arguments: the reason why they cannot catch  the right scaling
$\approx L^2$ for the time to approach the equilibrium shape is,
in our opinion, that they do not use any input from the knowledge of the macroscopic shape and of height
fluctuations properties of the equilibrium measure.

A first mathematical confirmation of the $L^{2+o(1)}$ scaling for the
time of convergence to equilibrium came in \cite{CMT}, where it was
proven that $\tmix\approx L^{2+o(1)}$, under the strongly limiting
assumption that the boundary height is such that the macroscopic shape
$\bp$ is flat (i.e. an affine function).
The same result was proven later, with a somewhat different method,
for more general tilings (e.g. domino tilings)
\cite{dominos}. 

Going beyond the flat case, as we do in the present work, requires
many novel
mathematical ideas.

\subsection{Hexagonal regions}
\label{sec:hexa}

A crucial role in our work is played by some special boundary heights, of
``hexagonal type''. This may look at first surprising since the
associated macroscopic shape is not at all non-extremal, in contrast
with the requirements  of
Theorem \ref{th:pratica2}. Such boundary conditions have played an
extremely important role in the understanding of random tilings: in
particular, this is
the first case where the occurrence of frozen region and of the
``arctic circle phenomenon'' was discovered \cite{CLP} (see also the earlier work \cite{JPS} for domino tilings). Also, the
uniform law on lozenge tilings has in this case an explicit
determinantal representation. This allows to extract sharp estimates, as $L\to\infty$,
on height fluctuations, on the finite-size corrections to the average
height with respect to the macroscopic shape $\bp$, and to prove
convergence of height fluctuations to the Gaussian Free Field  \cite{Petrov1,Petrov2}.

\begin{figure}
  \includegraphics[width=10cm]{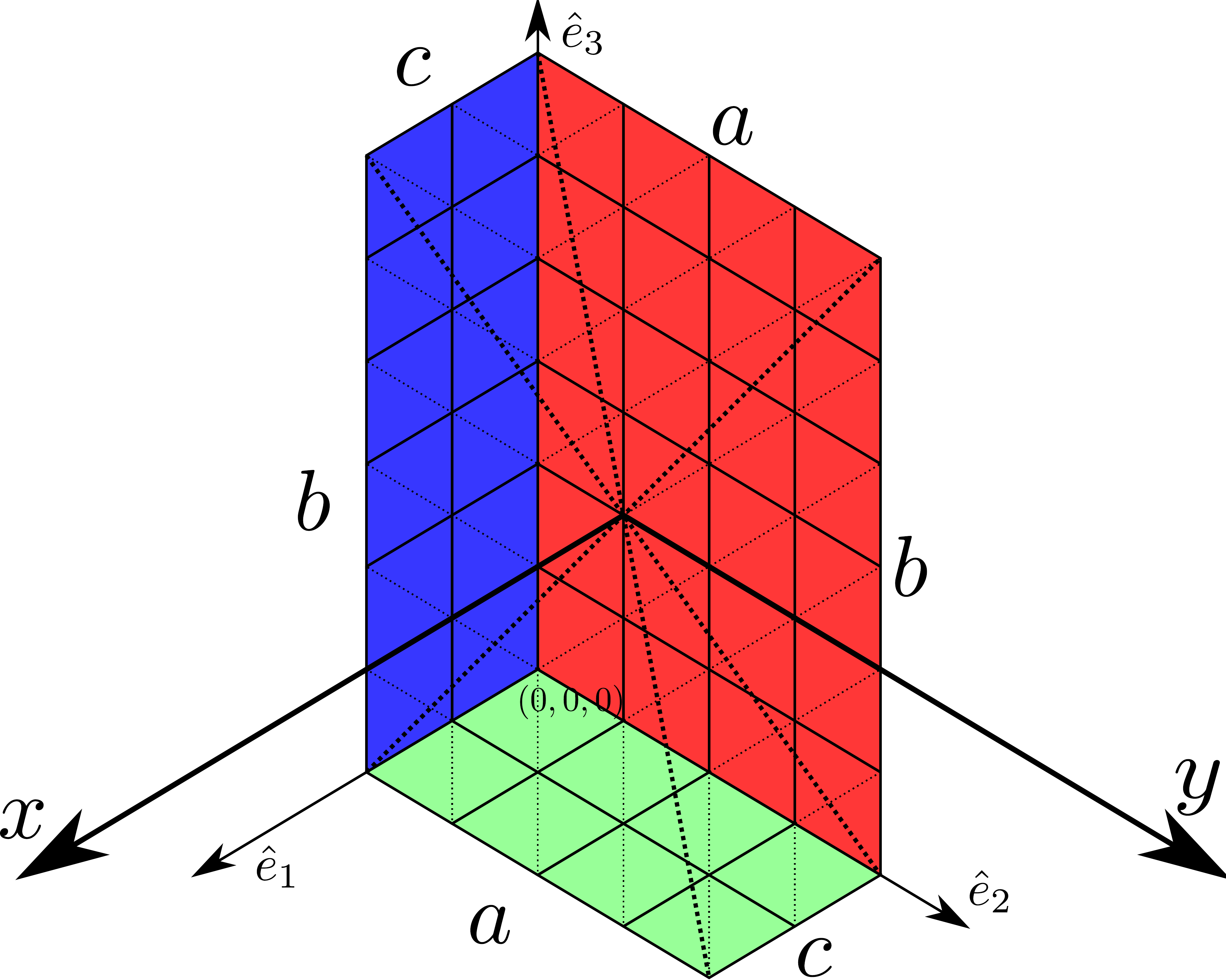}
\caption{The hexagon $\varhexagon_{abc}$. The origin of $(x,y)$ is set
at the center of the hexagon, where the three diagonals (dotted lines)
meet. The lines $\hat e_i$ are the $111$ projections of the positive coordinate
axes of $\bbR^3$.}
\label{fig:hexagone}
\end{figure}

Let the monotone surface $\Sigma$ be the boundary of $(\bbR^+)^3$, the positive
octant of $\bbR^3$ and let the half-infinite lines $\hat e_1,\hat e_2,\hat e_3$
be the $111$  projection of the positive coordinate
axes of $\bbR^3$, see Figure \ref{fig:hexagone}. Given $a,b,c>0$ 
let  $\varhexagon_{abc}$ be the hexagon in $P_{111}$ with angles of $120^\circ$ and with sides $a,b,c,a,b,c$, such that three of the vertices are on the lines $\hat e_i$.  The
sides of length $a$ are parallel to the $y$ axis and  those of length
$c$ to the $x$ axis (recall that the $x$ and $y$ axes are not
orthogonal). See again Figure \ref{fig:hexagone}. 
Without loss of generality, we will assume that $a+b+c=1$.

 We let $\Circle_{abc}$ be the open ellipse
  inscribed in $\varhexagon_{abc}$, and $\varphi_{abc}$ the boundary height of
  $\Sigma$ restricted to $\partial \varhexagon_{abc}$. 

This is the
  prototypical case where the macroscopic shape contains frozen regions:
\begin{theorem}\cite{CLP}\label{th:CLP}
Call $\bar\phi_{abc}$ the macroscopic shape when $U=\varhexagon_{abc}$ and  $\varphi=\varphi_{abc}$. In
$\Circle_{abc}$, $\bar\phi_{abc}$ is
analytic  and its gradient $\nabla \bar\phi_{abc}$ is in
$\stackrel\circ\tri$. On $\varhexagon_{abc}\setminus\Circle_{abc}$, $\nabla
\bar\phi_{abc}\in\partial \tri$. More precisely, remove from $\varhexagon_{abc}\setminus \Circle_{abc}$ the six
points of contact between $\Circle_{abc}$ and the boundary of the
hexagon and consider the six connected components of the set thus obtained. On the components that touch sides $ac$ (resp. $ab$,
resp. $bc$) the gradient is
$(0,0)$ (resp. $(-1,0)$, resp. $(0,-1)$).
\end{theorem}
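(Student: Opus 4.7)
The plan is to invoke the variational principle of Theorem~\ref{th:macro}: $\bar\phi_{abc}$ is the unique minimizer of the surface tension functional $\Psi$ among continuous monotone surfaces with boundary values $\varphi_{abc}$ on $\partial\varhexagon_{abc}$. Accordingly, I would exhibit an explicit candidate $\hat\phi$ with all the claimed geometric properties and check that it is a critical point of $\Psi$; strict convexity of $\Psi$, noted after \eqref{eq:aij}, then forces $\hat\phi=\bar\phi_{abc}$.

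First, I would establish the six frozen corner regions by a soft combinatorial argument. At each of the six vertices of $\varhexagon_{abc}$, reading $\varphi_{abc}$ along the two sides meeting at that vertex shows that, at the discrete level, a triangular ``forced region'' of tiles of one fixed type grows off the corner: the interior angles of the hexagon are $120^\circ$ and the boundary gradient on each edge is one of the three extremal slopes $(0,0)$, $(-1,0)$, $(0,-1)$, so that two of the three lozenge orientations are geometrically excluded in a neighborhood of the vertex. Passing to the macroscopic limit via Theorem~\ref{th:macro} and using the uniform Lipschitz bound on monotone heights, one concludes that $\nabla\bar\phi_{abc}$ is constant, lying in $\partial\tri$, on a non-empty open neighborhood of each corner, with the constant value determined by which pair of sides meets there.

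Next, I would construct $\hat\phi$ in the complement of the frozen corners by solving the Euler-Lagrange PDE~\eqref{eq:PDE}. Following Kenyon-Okounkov, I parametrize the slope in the liquid region by a complex variable $w$ in the upper half-plane via $-\pi\partial_x\hat\phi=\arg(w)$ and $-\pi\partial_y\hat\phi=\arg(1-w)$; this converts \eqref{eq:PDE} into the complex Burgers equation, whose general solution is implicit and prescribed by a single holomorphic function $F$. The boundary requirement that $w$ tend to one of the three degenerate values $0$, $1$, $\infty$ along each of the six sides of $\varhexagon_{abc}$ (matching the forced slope on that side) pins down $F$ as a rational function of low degree. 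A direct algebraic computation then shows that the locus in $\varhexagon_{abc}$ where $w$ hits $\bbR\cup\{\infty\}$, which is exactly where $\nabla\hat\phi$ touches $\partial\tri$, is a conic inscribed in the hexagon and tangent to all six sides; by uniqueness of the inscribed conic, it is the ellipse $\Circle_{abc}$. Integrating the slope back one recovers an analytic $\hat\phi$ in $\Circle_{abc}$.

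Finally, by construction $\hat\phi$ is monotone, equals $\varphi_{abc}$ on $\partial\varhexagon_{abc}$, is analytic in $\Circle_{abc}$ with $\nabla\hat\phi\in\stackrel\circ\tri$, and has the three claimed constant gradients on the six connected components of $\varhexagon_{abc}\setminus\Circle_{abc}$. Hence $\hat\phi$ is a critical point of $\Psi$ in the admissible class, and strict convexity yields $\hat\phi=\bar\phi_{abc}$, proving all the claims. The main obstacle is the algebraic identification of the arctic curve with the inscribed ellipse: one must verify that the rational function $F$ selected by the boundary matching produces exactly the tangent conic $\partial\Circle_{abc}$ and not some other algebraic curve. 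This is the core content of the original Cohn-Larsen-Propp theorem and requires a direct computation for which no obvious shortcut is available.
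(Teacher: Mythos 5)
The paper does not prove this statement; it is cited verbatim from \cite{CLP}, so there is no internal proof to compare against. Your sketch follows the Kenyon--Okounkov complex Burgers-equation strategy, which postdates \cite{CLP} and is a genuinely different route. Cohn, Larsen and Propp derived the hexagonal limit shape by working directly with MacMahon's product formula for boxed plane partitions: they computed the asymptotic density of horizontal lozenges along cross-sections via saddle-point analysis of exact enumeration formulas, obtained $\nabla\bar\phi_{abc}$ as an explicit integral, and verified separately that the resulting shape minimizes the entropy functional. Your plan instead works downstream from the Euler--Lagrange PDE \eqref{eq:PDE}, passing to the complex slope parametrization $w$, reducing to the complex Burgers equation, and reading off the arctic curve as the locus where $w$ becomes real. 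Both are legitimate; the Burgers-equation route is conceptually cleaner and generalizes more easily to other polygonal domains, whereas the CLP computation is self-contained and predates the PDE machinery.

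That said, the sketch has gaps. The ``soft combinatorial argument'' in your first step shows only that the frozen regions near the corners are nonempty; it does not by itself locate their boundaries, so the precise identification with $\varhexagon_{abc}\setminus\Circle_{abc}$ still hinges entirely on the explicit computation you defer. The appeal to ``uniqueness of the inscribed conic'' also needs a word: a generic hexagon has no inscribed conic (tangency to six lines is one condition too many), and for this hexagon one must invoke Brianchon's theorem---the three main diagonals are concurrent at the center, which is exactly the necessary and sufficient condition---after which five tangencies determine the conic uniquely. Most importantly, as you acknowledge, the core step of checking that the rational function $F$ pinned down by the boundary matching yields precisely $\partial\Circle_{abc}$ is left undone; without it the argument is a plan rather than a proof. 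The plan itself is sound and is a recognized modern alternative to the original \cite{CLP} derivation.
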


The ellipse $\Circle_{abc}$ is called the ``smooth region'' or ``liquid
region'', while $\varhexagon_{abc}\setminus \Circle_{abc}$ is the ``frozen region''.

\medskip

A discretization of $(\varhexagon_{abc},\varphi_{abc})$ is simply
obtained as $(\varhexagon_{a_Lb_Lc_L},\varphi_{a_Lb_Lc_L})_{L\ge1}$
where
\[a_L=(1/L)\lfloor a
L\rfloor, b_L=(1/L)\lfloor b L\rfloor, c_L=(1/L)\lfloor c
L\rfloor.\] To simplify formulas we will always pretend that
$aL, bL, cL$ are even integers, in which case $\varhexagon_{a_Lb_Lc_L} $
and $\varphi_{a_Lb_Lc_L}$ exactly coincide with $\varhexagon_{abc},\varphi_{abc}$,
and the center of the hexagon is a vertex of $\mathcal T_L$, that will
be chosen by convention  as the origin of $P_{111}$.
For lightness of notations, we write simply $\pi^{abc}_L$ for the uniform
measure $\pi_{\varhexagon_{a_Lb_Lc_L}}^{\varphi_{a_Lb_Lc_L}}$.

We already know from Theorem \ref{th:macro} that, under the measure $\pi_L^{abc}$,  the typical
height function is macroscopically close to $\bar\phi_{abc}$. The
following theorem makes this claim much sharper, but the statements
hold only in the liquid region:
\begin{theorem}
\label{th:Petrov}
For every $v,u\in  \Circle_{abc}$ 
  \begin{eqnarray}
    \label{eq:ave}
    |(\pi^{abc}_{L}(h(u))-\pi^{abc}_{L}(h(v)))-(\bar\phi_{abc}(u)-\bar\phi_{abc}(v))|\le K |u-v|/L
  \end{eqnarray}
where $K=K(a,b,c,u,v)$ is  bounded as long 
as 
\[\min(a,b,c,dist(u,\partial
\Circle_{abc}),dist(v,\partial \Circle_{abc}))\] is bounded away from zero. 

For every $u\in \Circle_{abc}$, $n>0$ and $\epsilon>0$,
\begin{eqnarray}
  \label{eq:7}
  \pi^{abc}_L\left(|h(u)-\pi^{abc}_L(h(u))|>L^{-1+\epsilon}\right)=O(L^{-n})
\end{eqnarray}
(again, the error term is uniform in $a,b,c,u$ if
$\min(a,b,c,dist(u,\partial
\Circle_{abc}))$ is bounded away from zero). 
\end{theorem}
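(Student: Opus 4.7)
The plan is to combine the determinantal structure of uniform hexagon tilings with Petrov's sharp asymptotic analysis of the correlation kernel \cite{Petrov1,Petrov2}. Via the standard bijection between lozenge tilings of $\varhexagon_{abc}$ and systems of non-intersecting lattice paths (or equivalently, tuples of interlacing integer sequences), $\pi^{abc}_L$ is realized as a determinantal point process on a discrete state space; the particles correspond, say, to horizontal lozenges, and the correlation kernel $K_L$ admits an explicit Hahn-polynomial representation. Petrov establishes uniform local asymptotic expansions of $K_L$ on compact subsets of the liquid region $\Circle_{abc}$: on the diagonal, $K_L(w,w)=\rho_{abc}(w)+O(1/L)$, where $\rho_{abc}$ is the smooth macroscopic density of horizontal lozenges, equal to a component of $-\nabla\bar\phi_{abc}$; off the diagonal, $|K_L(w,w')|\le C/|w-w'|$ in the bulk.

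For \eqref{eq:ave} I would first write $h(u)$, up to a deterministic boundary correction, as $1/L$ times the number of horizontal lozenges on the half-line through $u$ in a fixed direction, so that
\begin{align*}
L\bigl(\pi^{abc}_L(h(u))-\pi^{abc}_L(h(v))\bigr)=\mathrm{(deterministic)}+\sum_{w\in\Delta_L(u,v)}\pm K_L(w,w),
\end{align*}
with $\Delta_L(u,v)$ the symmetric difference of the two half-lines and $|\Delta_L(u,v)|=O(L|u-v|)$. Substituting $K_L(w,w)=\rho_{abc}(w)+O(1/L)$ turns the right-hand side into a Riemann sum for $L\int_v^u\rho_{abc}$, which by Theorem \ref{th:CLP} equals $L(\bar\phi_{abc}(u)-\bar\phi_{abc}(v))$ modulo a deterministic boundary term matching the one above. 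The Riemann-sum discretization error is $O(|u-v|)$ since $\rho_{abc}$ is real-analytic with bounded gradient on compact subsets of $\Circle_{abc}$, while the accumulated pointwise kernel error is $L|u-v|\cdot O(1/L)=O(|u-v|)$; dividing by $L$ yields \eqref{eq:ave} with the claimed uniformity.

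For \eqref{eq:7} the natural tool is the theorem of Hough--Krishnapally--Peres--Vir\'ag: for any determinantal ensemble with Hermitian projection kernel, the number $N(I)$ of particles in a set $I$ is distributed as a sum of independent Bernoullis, with parameters the eigenvalues of the restriction of $K_L$ to $I$. Since $Lh(u)$ coincides, up to a deterministic constant, with such an $N(I_u)$, and the variance of $N(I_u)$ equals
\begin{align*}
\sum_{w\in I_u}K_L(w,w)\bigl(1-K_L(w,w)\bigr)-\sum_{w\neq w'\in I_u}|K_L(w,w')|^2,
\end{align*}
the off-diagonal bound $|K_L(w,w')|\le C/|w-w'|$ yields $\mathrm{Var}_{\pi^{abc}_L}(Lh(u))=O(\log L)$. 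A Bennett concentration inequality applied to the independent Bernoulli decomposition then gives
\begin{align*}
\pi^{abc}_L\bigl(|Lh(u)-\pi^{abc}_L(Lh(u))|>L^{\epsilon}\bigr)\le 2\exp\bigl(-cL^{2\epsilon}/\log L\bigr),
\end{align*}
which decays faster than any polynomial in $L$ and proves \eqref{eq:7}.

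The main technical obstacle is extracting from \cite{Petrov1,Petrov2} the \emph{uniform} diagonal expansion $K_L(w,w)=\rho_{abc}(w)+O(1/L)$ on compact subsets of $\Circle_{abc}$, with a quantitative $O(1/L)$ remainder rather than merely a pointwise $o(1)$ correction. This uniformity rests on an effective steepest-descent analysis of the double-contour integral representation of $K_L$, whose error constants deteriorate as one approaches $\partial\Circle_{abc}$ or as $\min(a,b,c)\to 0$; this deterioration is precisely what is reflected in the uniformity clause in the statement.
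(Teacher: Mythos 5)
For \eqref{eq:ave} your strategy matches the paper's (Appendix A) quite closely: the paper also writes the expected height difference as a Riemann sum of one-point lozenge probabilities along a lattice segment, uses the double-contour representation of the correlation kernel and a quantitative steepest-descent analysis to get $\pi_L^{abc}(\mathbf 1_p)=\Pi_\infty(x,y)+O(1/L)$ uniformly on compacts of $\Circle_{abc}$ (their Proposition \ref{prop:noyau_horizontal}), and then sums. So this part is essentially the same proof.

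For \eqref{eq:7} your route is genuinely different from the paper's. The paper simply invokes Petrov's Lemma 5.6, which gives $\pi_L^{abc}\bigl(L^m|h(v)-\pi_L^{abc}(h(v))|^m\bigr)=O(L^\epsilon)$ for every $m$, and then applies Chebyshev with $m$ large; this is a two-line argument given the cited estimate. Your alternative via the Hough--Krishnapan--Peres--Vir\'ag independent Bernoulli decomposition plus Bennett's inequality is structurally attractive, because it is concentration ``from first principles'' for the determinantal process rather than imported moment bounds. However, as written it contains a genuine gap: the variance identity
\begin{equation*}
\mathrm{Var}\,N(I)=\sum_{w\in I}K_L(w,w)\bigl(1-K_L(w,w)\bigr)-\sum_{w\ne w'\in I}|K_L(w,w')|^2
\end{equation*}
has both sums of order $|I|\asymp L$, and the conclusion $\mathrm{Var}=O(\log L)$ depends on delicate \emph{cancellation} between them. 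The pointwise upper bound $|K_L(w,w')|\le C/|w-w'|$ only bounds the subtracted term from above, so it cannot by itself force the difference to be $O(\log L)$; one would need either a matching lower bound/asymptotic for the off-diagonal kernel, or to use the projection property of the full kernel on the hexagon to rewrite $\mathrm{Var}\,N(I)=\sum_{w\in I}\sum_{w'\notin I}|K_L(w,w')|^2$ with $w'$ ranging over the whole state space (including other lines and the near-frozen region, where the $1/|w-w'|$ bound does not hold). This is the kind of extra work the paper sidesteps by citing Petrov's moment estimates directly. The rest of your plan for \eqref{eq:7}, once the variance/moment control is secured, is sound.
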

The first claim is proven in Appendix \ref{app}, following  methods of \cite{Petrov1,Petrov2}.
The second one follows directly from  \cite[Lemma 5.6]{Petrov2}, 
where it is proven that $\pi_L^{abc}(L^n|h(v)-\pi_L^{abc}(h(v))|^n)=O(L^\epsilon)$,
plus Tchebyshev's inequality. 
Uniformity of the error term is
not stated explicitly in \cite{Petrov2}, but it can be easily extracted from the proof).

\smallskip

Let us state and prove a simple consequence of Theorem
\ref{th:Petrov}, that we need in Section \ref{sec:proofeq}. 
\begin{proposition}
\label{th:Petrov2}
Let as above $a,b,c>0$ with $a+b+c=1$ and $D_L$ be a discrete domain
contained in
$\Circle_{abc}$, whose distance from $\partial \Circle_{abc}$ is
at least $\delta>0$ independent of $L$. Let further $\varphi_L$ be a boundary height on $\partial D_L$ such that, for some  $C>0$,
\[
|\varphi_L(v)-\pi_L^{abc}(h(v))|\le C/L \quad\text{for every}\quad v\in \partial D_L.
\]
 Then, for every
$\epsilon>0$ and $n<\infty$,
\begin{eqnarray}
  \label{eq:25}
 \sup_{v\in D_L}
 \pi_{D_L}^{\varphi_L}(|h(v)-\pi_L^{abc}(h(v))|\ge L^{-1+\epsilon})\le c(\delta,n,\epsilon)L^{-n}.
\end{eqnarray}
\end{proposition}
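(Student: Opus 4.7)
The plan is to import the concentration estimate \eqref{eq:7} of Theorem \ref{th:Petrov}, which holds under the full hexagonal measure $\pi_L^{abc}$, into the conditional measure $\pi_{D_L}^{\varphi_L}$ on the sub-domain. The bridge will be a carefully chosen auxiliary boundary height $\psi_L^*$ on $\partial D_L$ that (i) arises as $h|_{\partial D_L}$ for a $\pi_L^{abc}$-typical tiling, so that the DLR equation identifies $\pi_{D_L}^{\psi_L^*}$ with a conditional of $\pi_L^{abc}$ and inherits sharp concentration, and (ii) differs from $\varphi_L$ by at most $L^{-1+\epsilon'}$ uniformly on $\partial D_L$, so that a monotone coupling can transfer the concentration of $\pi_{D_L}^{\psi_L^*}$ to $\pi_{D_L}^{\varphi_L}$.

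\textbf{Producing a typical boundary.} Fix $\epsilon'=\epsilon/2$ and $n\geq1$. Since $D_L$ is at distance $\geq\delta$ from $\partial\Circle_{abc}$, the uniform form of \eqref{eq:7} applies at every $v\in\partial D_L$; a union bound over the $O(L)$ such vertices shows that the event
\[
A=\{\forall v\in\partial D_L,\ |h(v)-\pi_L^{abc}(h(v))|\leq L^{-1+\epsilon'}\}
\]
satisfies $\pi_L^{abc}(A^c)\leq L^{-n}$. For a fixed interior vertex $v\in D_L$ set $B_v=\{|h(v)-\pi_L^{abc}(h(v))|>L^{-1+\epsilon}/2\}$; by \eqref{eq:7} again, $\pi_L^{abc}(B_v)=O(L^{-2n})$. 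The DLR equation gives
\[
\pi_L^{abc}(B_v)=\esp_{\pi_L^{abc}}\bigl[\pi_{D_L}^{h|_{\partial D_L}}(B_v)\bigr],
\]
so Markov's inequality produces a set of boundary configurations of $\pi_L^{abc}$-mass at least $1-L^{-n}$ on which $\pi_{D_L}^{h|_{\partial D_L}}(B_v)\leq L^{-n}$. Intersecting this set with $A$ is non-empty, and I pick a deterministic $\psi_L^*\in A$ with $\pi_{D_L}^{\psi_L^*}(B_v)\leq L^{-n}$.

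\textbf{Shift-sandwich coupling.} Since adding a multiple of $1/L$ to a valid boundary height preserves validity, set $K_L=\lceil 2L^{\epsilon'}\rceil/L$ and $\varphi_L^\pm=\varphi_L\pm K_L$. The hypothesis $|\varphi_L-\pi_L^{abc}(h(\cdot))|\leq C/L$ together with $\psi_L^*\in A$ gives, for $L$ large, the pointwise sandwich $\varphi_L^-\leq\varphi_L,\psi_L^*\leq\varphi_L^+$ on $\partial D_L$. I then run the standard grand monotone Glauber coupling of the four chains with boundary heights $\varphi_L^-,\varphi_L,\psi_L^*,\varphi_L^+$, driven by shared Poisson clocks and shared fair up/down coins. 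Because $\varphi_L^+=\varphi_L^-+2K_L$ is a pure constant shift and the flip-allowed predicate depends only on the local configuration, which is shift-equivariant, the extreme chains stay exactly $2K_L$ apart: $h^+(t)-h^-(t)\equiv 2K_L$. The intermediate chains are squeezed in between, so in stationarity $|h^{\varphi_L}(v)-h^{\psi_L^*}(v)|\leq 2K_L\leq 3L^{-1+\epsilon'}$ almost surely. The choice $\epsilon'=\epsilon/2$ gives $3L^{-1+\epsilon'}<L^{-1+\epsilon}/2$ for $L$ large, hence the coupling gives the event inclusion $\{|h^{\varphi_L}(v)-\pi_L^{abc}(h(v))|>L^{-1+\epsilon}\}\subseteq B_v$. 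It follows that
\[
\pi_{D_L}^{\varphi_L}(|h(v)-\pi_L^{abc}(h(v))|>L^{-1+\epsilon})\leq\pi_{D_L}^{\psi_L^*}(B_v)\leq L^{-n}.
\]
Choosing $\psi_L^*$ separately for each $v\in D_L$ and then taking the supremum in $v$ gives \eqref{eq:25}.

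\textbf{Main obstacle.} The only genuinely non-trivial point is the existence of a deterministic typical boundary $\psi_L^*$ for which the DLR-conditional measure is itself super-polynomially concentrated; this uses crucially that the error bound in \eqref{eq:7} is $O(L^{-n})$ for every $n$, which provides enough cushion to absorb both the union bound over $O(L)$ boundary vertices and the Markov inequality step. The shift-coupling piece is a standard observation about lozenge tilings (the uniform measure is invariant under constant translations of all heights, and the monotone grand coupling is well known); the only subtlety is the joint coupling of four chains in which the two extreme ones maintain a rigid separation, which is ensured by shift-equivariance of the allowed-flip predicates.
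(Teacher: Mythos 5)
Your proof is correct, but it takes a genuinely different and somewhat more elaborate route than the paper's. The paper's argument is a short one-sided comparison: for the upper tail, it shifts $\varphi_L$ down by $L^{-1+\epsilon}/2$ to a boundary $\varphi'_L$, observes via \eqref{eq:7} that the random $\pi_L^{abc}$-boundary on $\partial D_L$ dominates $\varphi'_L$ except with probability $O(L^{-n})$, and then uses DLR plus monotonicity directly: $\pi_L^{abc}(A)=\esp[\pi_{D_L}^{h|_{\partial D_L}}(A)]\ge(1-O(L^{-n}))\pi_{D_L}^{\varphi'_L}(A)$ for any increasing event $A$, so $\pi_{D_L}^{\varphi'_L}(A)$ is bounded by $\pi_L^{abc}(A)/(1-O(L^{-n}))=O(L^{-n})$; the lower tail is symmetric. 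You instead go through an intermediate object: a deterministic ``typical'' boundary $\psi_L^*$, whose existence you extract from DLR and Markov's inequality (this is where you consume the $O(L^{-2n})$ budget), for which the DLR-conditional measure $\pi_{D_L}^{\psi_L^*}$ is itself concentrated; then you transfer concentration from $\psi_L^*$ to $\varphi_L$ via a two-sided shift-sandwich coupling, using that constant shifts of the boundary shift the equilibrium measure rigidly and that the grand Markovian coupling preserves both monotonicity and the exact $2K_L$-separation of the two shifted extremes. What each approach buys: the paper's version is shorter and avoids having to exhibit an explicit good boundary, while your version handles both tails in one stroke and makes the mechanism (proximity of boundaries implies proximity of measures) more explicit; both ultimately rest on the same trio of DLR, monotonicity, and Theorem \ref{th:Petrov}. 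Two small points: your constant $2K_L\le 3L^{-1+\epsilon'}$ should read $2K_L\le 6L^{-1+\epsilon'}$ (or similar), which does not affect the conclusion since $\epsilon'<\epsilon$; and it is worth stating explicitly that $\psi_L^*$ depends on $v$ and $n$, as you do acknowledge at the end, which is harmless because the final bound is a supremum over $v$ for fixed $n$.
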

\begin{proof}[Proof of Proposition \ref{th:Petrov2}] The arguments are
  rather standard, so let us be sketchy. 
Suppose for instance we want to upper bound 
\begin{eqnarray}
  \label{eq:39}
\sup_v  \pi_{D_L}^{\varphi_L}\left(h(v)-\pi_L^{abc}(h(v))\ge L^{-1+\epsilon}\right).
\end{eqnarray}
Define $\varphi'_L=\varphi_L-L^{-1+\epsilon}/2$. 
From \eqref{eq:7}  we see that, under the measure $\pi_L^{abc}$, except
with probability $O(L^{-n})$ one has $h(v)\ge \varphi'_L$ for every $v\in \partial D_L$.
Therefore, 
\begin{multline}
  \label{eq:48}
\sup_v  \pi_{D_L}^{\varphi_L}(h(v)-\pi_L^{abc}(h(v))\ge
  L^{-1+\epsilon})=\sup_v\pi_{D_L}^{\varphi'_L}(
  h(v)-\pi_L^{abc}(h(v))\ge L^{-1+\epsilon}/2)\\
\le \sup_{v}\pi_L^{abc}(h(v)-\pi_L^{abc}(h(v))\ge
L^{-1+\epsilon}/2) +O(L^{-n}).
\end{multline}
In the last step 
we used monotonicity (the increasing event $h(v)-\pi_L^{abc}(v)\ge L^{-1+\epsilon}/2$ becomes more likely if we replace $\varphi'_L$ with a higher boundary condition, see Section \ref{sec:monoton}) and the DLR equations.
Then, Eq. \eqref{eq:7} implies directly \eqref{eq:39}  (estimates are uniform in $v$ because we
assumed that all $v\in D_L$ are uniformly bounded away, by at least
$\delta$, from $\partial \Circle_{abc}$.)

\end{proof}

\subsubsection{Dynamics with hexagonal boundary height}

 \label{rem:ellisse}
 Let $U=\Circle_{abc}$ and the boundary condition $\varphi$ be the
 restriction of $\bp_{abc}$ to $\partial U$. The macroscopic shape
 $\bp$ is \emph{not} non-extremal in $U$ since, while the gradient
 $\nabla \bp$ is well-defined and belongs to $\stackrel\circ\tri$
 everywhere in $\stackrel \circ U$, it approaches $\partial\tri$ when
 the boundary of $U$ is approached.  However, Corollary
 \ref{cor:quasifrozen} is applicable in this case, implying the
 estimate \eqref{eq:provaw} on the time when the equilibrium shape is
 reached.  Just take some sequence $u^{(n)}>0$ tending to zero,
 define \[a^{(n)}=a(1+u^{(n)}),b^{(n)}=b(1+u^{(n)}),c^{(n)}=c(1+u^{(n)})\]
 and let $\varphi^{(n)}$ be the restriction to $\partial
 \Circle_{abc}$ of the macroscopic shape $\bp_{a^{(n)}b^{(n)}c^{(n)}}$
 corresponding to the expanded hexagon
 $\varhexagon_{a^{(n)}b^{(n)}c^{(n)}}$. Since  $\Circle_{a^{(n)}b^{(n)}c^{(n)}}$
contains $U=\Circle_{abc}$ strictly, the macroscopic shape $\bp^{(n)}=\bp_{a^{(n)}b^{(n)}c^{(n)}}$
 is non-maximal in $U$.

The sharp control of the equilibrium measure provided by Theorem
\ref{th:Petrov}, together with the methods developed in the proof 
of Theorem \ref{th:pratica2}, allow to prove the stronger result
$\tmix\approx L^{2+o(1)}$ (Conjecture \ref{th:idealmente2}) in the case where 
$U$ is a closed, simply connected  subset
of the open ellipse $\Circle_{abc}$ and the boundary height $\varphi$ is the restriction of 
$\bar\phi_{abc}$ to $\partial U$. 
\begin{theorem}
\label{th:praticamente} \cite{LTprep} Fix $a,b,c$ with $a+b+c=1$ and
$a>0,b>0,c>0$. Let $U$ be a closed domain contained in $\Circle_{abc}$. Let $\varphi$ be the restriction of 
$\bar\phi_{abc}$ to $\partial U$ and let $(\varphi_L, U_L)_{L\ge1}$ a discretization of
$(\varphi,U)$. Then, $\tmix\le c(\delta)L^{2+\delta}$ for every
$\delta>0$.
\end{theorem}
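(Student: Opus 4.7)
The plan is to reduce the mixing-time control to two ingredients: (i) the shape-convergence estimate of Theorem \ref{th:pratica2}, which drives the height to within $\epsilon_L$ of $\bp_{abc}$ in time $L^{2+\delta}$; and (ii) the sharp equilibrium estimates of Theorem \ref{th:Petrov} and Proposition \ref{th:Petrov2}, which upgrade this macroscopic control to total-variation closeness via mesoscopic coupling. Since $U$ is a closed subset of the open liquid region $\Circle_{abc}$, the gradient $\nabla\bp_{abc}$ stays at strictly positive distance from $\partial\tri$ uniformly on $U$, so $\bp_{abc}|_U$ is non-extremal and Theorem \ref{th:pratica2} applies directly. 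I would start by coupling monotonically the two extremal chains $h^+_t$ and $h^-_t$ from the maximal and minimal admissible initial heights; by the standard grand-coupling argument their coalescence time dominates $\tmix$, so it suffices to show that $h^+_t=h^-_t$ with probability close to one at time $L^{2+\delta}$. Phase 1 runs both copies up to time $T_1=L^{2+\delta/3}$, after which $|h^\pm_{T_1}(v)-\bp_{abc}(v)|\le \epsilon_L$ everywhere in $U_L$ with high probability.

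Phase 2 is a mesoscopic censoring argument. I cover $U_L$ by two overlapping families of small hexagons $\{H_j\}$ of side $\ell=L^{-1/2+\delta'}$ with $0<\delta'\ll\delta$, chosen so that every interior vertex is deep inside some $H_j$ from each family. On each $H_j$ whose closure is away from $\partial U$, Theorem \ref{th:inf2} and the local-structure analysis identify the second-order Taylor expansion of $\bp_{abc}$ at the center of $H_j$ with that of the macroscopic shape of a reference hexagon $\varhexagon_{a_j b_j c_j}$, so that $\bp_{abc}|_{H_j}$ and the corresponding hexagon equilibrium agree up to an error $O(\ell^3)=o(1/L)$. Consequently, if the boundary heights on $\partial H_j$ are within $O(1/L)$ of $\pi^{abc}_L(h(\cdot))$, Proposition \ref{th:Petrov2} (rescaled to the mesoscopic hexagon) guarantees that the conditional equilibrium inside $H_j$ concentrates at scale $L^{-1+\epsilon}$ around the correct hexagonal heights with overwhelming probability. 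Running local Glauber dynamics inside $H_j$ for time $\ell^2 L^{o(1)}\ll T_1$ suffices (by a mesoscopic application of Theorem \ref{th:pratica2}, with Wilson's a-priori $O(\ell^2\log\ell)$ bound as fallback) to equilibrate that block conditionally on its boundary.

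The main step is then a bootstrap through precision scales. After Phase 1 the precision is $\epsilon_L$, whereas Proposition \ref{th:Petrov2} needs precision $O(1/L)$. I would iterate the mesoscopic equilibration $K=O(\log L)$ times, alternating between the two shifted covers so that each lattice site is interior to a mesoscopic box at every stage. A path-coupling / block-dynamics argument shows that each iteration contracts the $L^\infty$ discrepancy $\norm{h^+-h^-}_\infty$ by a constant factor: heuristically, within one mesoscopic block the two copies are driven toward the \emph{same} near-equilibrium profile (up to $L^{-1+\epsilon}$), so the gap outside the overlap shrinks correspondingly. After $K$ stages the precision reaches $O(L^{-1+\epsilon})$, and at that point Proposition \ref{th:Petrov2} delivers a strictly positive per-block coalescence probability per unit time, giving $\max_\eta\norm{\mu^\eta_t-\pi^{abc}_L}\le 1/(2e)$ and hence $\tmix\le L^{2+\delta}$ by \eqref{eq:18}.

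The hard part is exactly this bootstrap from macroscopic ($\epsilon_L$) to microscopic ($L^{-1+\epsilon}$) accuracy on the correct comparison profile $\pi^{abc}_L(h(\cdot))$. It rests on two uniformity inputs that must be checked carefully: the uniformity of the constants in Theorem \ref{th:Petrov} and Proposition \ref{th:Petrov2} as $(a_j,b_j,c_j)$ and the mesoscopic domains range over a compact family (this is where the hypothesis $U\subset\Circle_{abc}$ really enters, via keeping $\nabla\bp_{abc}$ in a compact subset of $\stackrel\circ\tri$), and the uniformity of the mesoscopic shape convergence used at each iteration. A secondary technical obstacle is the handling of mesoscopic hexagons intersecting $\partial U$; I would address this by enlarging the hexagon triple $(a,b,c)$ slightly as in the argument of Section \ref{rem:ellisse}, and sandwiching the dynamics in $U$ between two auxiliary dynamics on enlarged/shrunk domains via monotonicity.
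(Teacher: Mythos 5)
The paper does not actually prove Theorem \ref{th:praticamente}: it is cited from the forthcoming reference \cite{LTprep}, with only the hint that the argument combines the sharp equilibrium control of Theorem \ref{th:Petrov} with the methods behind Theorem \ref{th:pratica2}. There is therefore no proof here to compare against, and I can only assess your sketch on its own terms.

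Your scaffolding --- grand coupling of extremal chains, macroscopic equilibration via Theorem \ref{th:pratica2}, refined hexagonal comparison via Theorem \ref{th:Petrov} and Proposition \ref{th:Petrov2}, and a final application of Lemma \ref{lemma:Wilson} once floor and ceiling are within $L^{o(1)}/L$ --- is plausible and consistent with what the paper suggests. But the central step, the multiplicative bootstrap, does not hold as stated. You assert that re-equilibrating a mesoscopic block contracts $\norm{h^+-h^-}_\infty$ by a constant factor. It does not: if the boundary heights on $\partial H_j$ differ by a roughly constant $\Delta$, then by monotonicity the two conditional equilibria inside $H_j$ differ by at most, and for a constant shift essentially exactly, $\Delta$ (a constant shift of the boundary height shifts the equilibrium mean by the same constant, up to $O(L^{-1+\epsilon})$ fluctuations). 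A single block re-equilibration therefore brings the interior discrepancy to $O(\max(\Delta,L^{-1+\epsilon}))$, not to $c\Delta$ with $c<1$; this is precisely the lack of contraction for the local chain that forces \cite{LRS,Wilson} to settle for polynomial bounds or pass through a non-local chain. In addition, the iteration of Theorem \ref{th:pratica2} cannot be pushed to the microscopic scale you need: in the proof of Claim \ref{claim:j}, the mesoscopic disk $D_u$ of radius $L^{-1/2+\delta/100}$ must lie strictly inside the domain $V$ obtained by shrinking $U$ by $\epsilon_L^2$, so one needs $\epsilon_L^2\gg L^{-1/2+\delta/100}$, i.e.\ $\epsilon_L\gg L^{-1/4+o(1)}$. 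Reaching $L^{-1+\epsilon}$ precision requires a genuinely new idea, presumably exploiting the global hexagonal compatibility of the boundary height to remove the boundary-layer obstruction, and this is exactly what is missing from your sketch. Your proposed remedy, enlarging $(a,b,c)$ as in Section \ref{rem:ellisse}, does not address this: that was designed to approximate an extremal $\bp$ by non-extremal ones, but here it changes the target macroscopic shape by $\Theta(1)$ while the actual boundary height on $\partial U$ remains $\bar\phi_{abc}|_{\partial U}$, so the comparison would no longer be to the correct equilibrium.
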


\section{Local structures of macroscopic shapes}

\label{sec:localstructures}
In this section we formalize and prove Theorem \ref{th:inf2}.

Let  $\bar\phi$ be the macroscopic shape in some domain $U$ with some
boundary height $\varphi$. Consider a point $(x,y)\in \inte U$ where
$ \bar \phi$ is at least twice differentiable, and such that 
$\nabla\bar\phi\in\inte\tri$;  let $H^{\bar\phi}$ be the $2\times 2$
Hessian matrix of
$\bar\phi$ at $(x,y)$. We call 
$\{\nabla\bar\phi,H^{\bar\phi}\}$, the \emph{local structure
  of $\bar\phi$ at $(x,y)$}.
We are excluding points $(x,y)$ where the gradient of
$\bar\phi$ is in $\partial\tri$, or where $\bar\phi$ is non-smooth: in any case, our Theorem \ref{th:pratica2} involves only domains where the macroscopic shape
is non-extremal and in particular is $C^\infty$.

Recall that, if  $\nabla\bar\phi\in\inte\tri$, the components of the
Hessian $H^{\bar\phi}$ verify Eq. \eqref{eq:PDE}, i.e. for any local
structure $\nabla\bp$ and $H^{\bp}$ are related by 
\begin{eqnarray}
  \label{eq:50}
  \sum_{i,j=1}^2a_{ij}(\nabla\bp)H^{\bp}_{ij}=0.
\end{eqnarray}
Therefore, to identify a local
structure it is sufficient to know the gradient of $\bar\phi$ and two
elements of the Hessian matrix, say the $\partial^2_x$ and
$\partial^2_{xy}$ components (i.e. the $(11) $ and $(12)=(21)$ matrix elements). In view of this, we define
\begin{eqnarray}
  \label{eq:28}
  \mathcal A=\{z=(z_1,z_2,z_{11},z_{12})\in \mathbb R^4: (z_1,z_2)\in
  \stackrel\circ\tri
\}
\end{eqnarray}
which should be seen as \emph{the set of all a priori admissible local structures}. Note that
it is not guaranteed that every $z\in \mathcal A$ can be actually
realized as the local structure for some boundary condition.

Let us also define the open set
\[
W=\{w=(a,b,x,y)\in\bbR^4:a>0,b>0,a+b<1,(x,y)\in \Circle_{abc}\}
\]
where as usual it is understood that  $c=c(a,b)=1-a-b$. 
This is the set parametrizing points in  ellipses of the type $\Circle_{abc}$.
\begin{remark}
The allowed values of 
$(a,b)$
belong to the interior of  triangle $\mathbb V=-\tri$, with $\tri$ as in Definition \ref{def:tri}. 
\end{remark}
We introduce a map $f:W\mapsto \mathcal A$ as follows:
\[
f(a,b,x,y)=z=(z_1,z_2,z_{11},z_{12})\in\bbR^4
\]
with $(z_1,z_2)$ the slope $\nabla \bar\phi_{abc}$ at $(x,y)$ (with
$\bar\phi_{abc}$ the macroscopic shape corresponding to the hexagon
$\varhexagon_{abc}$, as in Theorem \ref{th:CLP}) and 
\[
(z_{11},z_{12})=(\partial^2_x \bar\phi_{abc} , 
\partial^2_{xy}\bar\phi_{abc})\in\bbR^2\] with the derivatives
computed at $(x,y)$.  Note that $f(W)$ is the set of all local
structures arising from 
macroscopic shapes with  boundary heights of ``hexagonal type''. A priori it could be  that $f(W)$ is a proper
subset of $\mathcal A$, and even that  $f(W)$ has
topological dimension smaller than $4$. Indeed, hexagonal boundary
conditions look very special in the class of all admissible boundary heights.
However, Theorems \ref{jacob2} and
\ref{th:crazy1} below exclude
these possibilities.

Let the $4\times 4$
matrix
\begin{eqnarray}
Df(w)=\left(  \begin{array}{ccccc}
   \partial_a z_1 &  \partial_b z_1 &  \partial_x z_1 &  \partial_y z_1 \\
 \partial_a z_2 & \partial_b z_2 &  \partial_x z_2 & \partial_y z_2 \\
\partial_a z_{11}&\partial_b z_{11}& \partial_x z_{11} & \partial_y z_{11} \\
\partial_a z_{12}&\partial_b z_{12} &\partial_x z_{12} & \partial_y z_{12} 
  \end{array}
\right)
\label{eq:Df}
\end{eqnarray}
denote the derivative of $f$ at $w\in W$. If, for some
$w\in W$, $Df(w)$ has (maximal) rank $4$,
then $f$ is locally bijective on $\mathcal A$: every point $z$ in a suitable  neighborhood
$B(f(w),\epsilon)\cap \mathcal A$  (with $B(z_0,r)$ the ball of radius
$r$ centered at $z_0$) has a unique pre-image
through $f$ in $W$ at distance $O(\epsilon)$
from $w$. 
 We have
\begin{theorem}
  \label{jacob2} The rank of $Df(w)$ is $4$ for every $w\in W$
. More
  precisely, in compact subsets of
  $W$ the 
  determinant of $Df(w)$ is strictly negative.
\end{theorem}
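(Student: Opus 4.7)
The plan is to organize the $4\times 4$ determinant $\det Df(w)$ via a change of variables that reduces it to a $2\times 2$ Jacobian, and then to evaluate the latter using the Cohn--Larsen--Propp parametrization of $\bar\phi_{abc}$ on the liquid region $\Circle_{abc}$.

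\textbf{Sign of the Hessian and reduction to a $2\times 2$ determinant.} Within $Df(w)$, the $2\times 2$ block $\partial(z_1,z_2)/\partial(x,y)$ equals the Hessian $H^{\bar\phi_{abc}}(x,y)$, since by definition $z_i = \partial_i \bar\phi_{abc}$. Because $\bar\phi_{abc}$ satisfies the elliptic PDE~\eqref{eq:50}, substituting $z_{22}=-(a_{11}z_{11}+2a_{12}z_{12})/a_{22}$ yields
\begin{equation*}
\det H^{\bar\phi_{abc}}=z_{11}z_{22}-z_{12}^2=-\frac{1}{a_{22}}\bigl(a_{11}z_{11}^2+2a_{12}z_{11}z_{12}+a_{22}z_{12}^2\bigr)\leq 0,
\end{equation*}
with equality only when $(z_{11},z_{12})=(0,0)$, by strict positive definiteness of $\mathbf{a}(z_1,z_2)$ in $\stackrel\circ\tri$. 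From the explicit Cohn--Larsen--Propp description of $\bar\phi_{abc}$ (see~\cite{CLP} and Theorem~\ref{th:CLP}), the gradient map $(x,y)\mapsto\nabla\bar\phi_{abc}(x,y)$ is an analytic diffeomorphism from $\Circle_{abc}$ onto $\stackrel\circ\tri$, so $\det H^{\bar\phi_{abc}}$ never vanishes and is therefore strictly negative throughout $\Circle_{abc}$. Consequently, I would change variables on $W$ from $(a,b,x,y)$ to $(a,b,z_1,z_2)$; in these coordinates $f$ takes the form $(a,b,z_1,z_2)\mapsto(z_1,z_2,\widetilde z_{11}(a,b,z_1,z_2),\widetilde z_{12}(a,b,z_1,z_2))$, and the chain rule gives
\begin{equation*}
\det Df(w)\;=\;\det H^{\bar\phi_{abc}}(x,y)\cdot\det M,\qquad M:=\frac{\partial(\widetilde z_{11},\widetilde z_{12})}{\partial(a,b)}\bigg|_{(z_1,z_2)\text{ fixed}}.
\end{equation*}
Since $\det H^{\bar\phi_{abc}}<0$, proving the theorem reduces to showing $\det M>0$ throughout $W$.

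\textbf{Explicit computation of $M$.} To evaluate $M$, I would use the classical description of $\bar\phi_{abc}$ on $\Circle_{abc}$ from~\cite{CLP}: the three lozenge densities at $(x,y)\in\Circle_{abc}$ equal $\alpha(x,y)/\pi,\beta(x,y)/\pi,\gamma(x,y)/\pi$, where $\alpha,\beta,\gamma$ are the angles of an explicit ``characteristic triangle'' whose vertices depend on $(a,b,c)$ and $(x,y)$. Fixing the slope $(z_1,z_2)$ is equivalent to fixing these angles; the implicit equations relating the hexagon parameters, the position, and the angles determine $(x,y)$ smoothly in terms of $(a,b)$ along each level set of the slope. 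Differentiating the density formulas with respect to $(x,y)$ produces closed expressions for $\widetilde z_{11},\widetilde z_{12}$, and a further differentiation yields each entry of $M$ as a rational function of $(a,b,c)$ and the characteristic-triangle data.

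\textbf{Main obstacle.} The hard part is to verify the strict inequality $\det M>0$ on the whole of $W$, not merely non-vanishing at generic points. I expect the closed-form expression for $\det M$ to factor as a product of manifestly positive geometric quantities (pairwise side-lengths, sines of angles of the characteristic triangle) times a simpler algebraic factor whose sign can be pinned down by evaluating at a convenient symmetric reference configuration, such as the center of the regular hexagon $a=b=c=1/3$. The sign can then be propagated to all of $W$ by connectedness and continuity, provided zero crossings are ruled out by the factorization. Uniform strict negativity of $\det Df$ on compact subsets of $W$ finally follows from continuity and compactness.
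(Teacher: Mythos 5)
Your block-reduction idea is genuinely different from what the paper does: the paper brute-forces the full $4\times4$ determinant with computer algebra (it comes out as $\tfrac{1}{32\pi^4}N(x,y)/D(x,y)$ with $N>0$ and $D<0$ inside the hexagon), whereas you change coordinates $(a,b,x,y)\mapsto(a,b,z_1,z_2)$ so that $\det Df = \det H^{\bar\phi_{abc}}\cdot\det M$, and the first factor is controlled by the PDE and the ellipticity of $\mathbf{a}$. That reduction is a nice structural observation, and the step $\det H^{\bar\phi_{abc}}<0$ is essentially sound, modulo the fact that you lean on the (plausible, but not verified in your write-up) statement that the gradient map $(x,y)\mapsto\nabla\bar\phi_{abc}$ is a global diffeomorphism of $\Circle_{abc}$ onto $\stackrel\circ\tri$; that claim both grounds the sign of $\det H$ and makes the coordinate change $(x,y)\to(z_1,z_2)$ legitimate.

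However, there is a genuine gap in the second half. You never actually compute $\det M$, and the argument you sketch for its positivity is circular: you propose to evaluate $\det M$ at the symmetric configuration $a=b=c=1/3$ and then ``propagate by connectedness and continuity,'' but that propagation only works once you already know $\det M$ has no zeros on $W$ --- which is precisely the content of the hoped-for factorization you have not produced. So the plan reduces the problem to showing a $2\times2$ determinant is sign-definite, but then assumes the very non-vanishing that needs to be proved. Unless you carry out the differentiation of the Cohn--Larsen--Propp angle parametrization, exhibit the factorization of $\det M$ into manifestly signed pieces, and check that the remaining algebraic factor has no zeros on $W$, the argument is incomplete; at that point the work involved is of the same nature as the paper's explicit computation of $N(x,y)$ and its verification that $N$ has no real zeros, just organized differently.
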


\begin{proof}[Proof of Theorem \ref{jacob2}]
  Call simply $\bar\phi$ the macroscopic shape in the $abc$ hexagon
  and recall that, here and in the following, $c=1-a-b$.  The explicit
  expression for $\nabla \bar \phi$ is given in
  \cite{CLP}\footnote{The authors of \cite{CLP} consider on $P_{111}$
    an orthogonal coordinate frame $(u,v)$ that does not coincide with the
    non-orthogonal coordinate frame $(x,y)$ we adopt here. The change of coordinates is given in
\eqref{eq:29}.}.  Introduce the coordinates
  $(u,v)$ and $(u',v')$ as
\begin{gather}
  \label{eq:29}
  u=u(x,y)=-x+\frac y2,\quad
v=v(x,y)=-\frac{\sqrt 3}2y\\
u'=u'(x,y)=y - \frac{x}{2}=u/2-\frac{\sqrt3}2v,\quad
v'=v'(x,y)=-\frac{\sqrt{3}}{2}x=\frac{\sqrt 3}2u+\frac v2.
\end{gather}
What one finds is then
\begin{gather}
  \label{eq:s,t}
z_1=  \partial_x \bp=-\frac1\pi\cot^{-1}\left[
\frac{Q_{bca}(u(x,y),v(x,y))}{\sqrt{E_{bca}(u(x,y),v(x,y))}}
\right]
\\
z_2=\partial_y \bp=-\frac1\pi\cot^{-1}\left[
\frac{Q_{cab}(u'(x,y),v'(x,y))}{\sqrt{E_{cab}(u'(x,y),v'(x,y))}}
\right]\\
Q_{abc}(u,v)=\frac{\sqrt3}2 \left(\frac43 v^2 - 4 u^2 + b^2 + a b + b c - 
   a c\right),
\\
E_{abc}(u,v)=3a b c -(3(a+c)^2u^2-2\sqrt3(a+2b+c)(a-c)u
v\\\nonumber +((a+2b+c)^2-4a c)v^2).
\end{gather}
For later convenience, let us point out also that
\begin{eqnarray}
  \label{eq:32}
 z_3:= -1-z_1-z_2=-\frac1\pi\cot^{-1}\left[
\frac{Q_{abc}(u''(x,y),v''(x,y))}{\sqrt{E_{abc}(u''(x,y),v''(x,y))}}
\right]
\end{eqnarray}
with
\begin{eqnarray}
  \label{eq:33}
  u''(x,y)=\frac{1}{2}x+\frac{1}{2}y=-\frac{u}2-\frac{\sqrt3}2v,\quad
  v''(x,y)=-\frac{\sqrt{3}}{2}x + \frac{\sqrt{3}}{2}y=\frac{\sqrt3}2 u-\frac{v}2.
\end{eqnarray}

Let also $z_{11}=\partial_x z_1, z_{12}=\partial_y z_1=\partial_x z_2$.
\begin{remark}
\label{rem:tuttoliscio}
The boundary of the ellipse $\Circle_{abc}$ corresponds to the set of zeros of
$E_{bca}(u(x,y),v(x,y))$. One can also check that
\[E_{bca}(u(x,y),v(x,y))=E_{cab}(u'(x,y),v'(x,y))=
E_{abc}(u''(x,y),u''(x,y))\] and formulas above are understood to hold inside $\Circle_{abc}$. From the explicit formulas \eqref{eq:Df} and \eqref{eq:s,t} one sees
that both $f(w)$ and $Df(w)$ are $C^\infty$ in $W$, with uniform bounds when $w$ is in  compact subsets of $W$.  
\end{remark}

We have to prove that the determinant of the matrix 
$Df$
is negative. Observe that, when we take derivatives with respect to
$a$ or $b$, we have to remember that $c$ is a function of $a,b$. One
can painfully check\footnote{It is immediate from Eqs. \eqref{eq:s,t}
  that the matrix elements of $Df$, and therefore also  the determinant, are
  rational functions of $a,b,x,y$. For the actual computation of the
  coefficients of the two polynomials we used Mathematica, in order to
symbolically  simplify otherwise intractable expressions.} that the determinant of $Df$
equals
\begin{gather}
  \det(Df)=\frac1{32\pi^4}\frac{N_{}(x,y)}{D_{}(x,y)}\\
N_{}(x,y)=
(1 - a) (1 - b) (1-c) + 2 (1 - a^2) y^2 + 2(1-c^2) x^2 - 
  4 (1 - a) (a+b) x y
\\
D_{}(x,y)=
\left(y^2 - \frac{(a + b)^2}4\right) \left(x^2 - \frac{(1 - a)^2}4\right)^2\\
\nonumber \times \left(x - (y - \frac{1 - b}2)\right)^2 \left(x - (y + \frac{1 - b}2)\right)^2.
\end{gather}
One easily sees that $D_{}(x,y)$ vanishes exactly along the 
sides of the hexagon $\varhexagon_{abc}$, and is negative inside the hexagon (since in the hexagon the $y$ coordinate ranges between $-(a+b)/2$ and $+(a+b)/2$).
As for the numerator, it vanishes for  
\begin{eqnarray}
  y=-\frac{2(1-a)(a+b)x\pm \sqrt2\sqrt{(1-a)(1-b)(a+b)(-1+a^2-4x^2)}}{2(a^2-1)}
\end{eqnarray}
Since $a<1$, the square root is imaginary and
therefore the numerator has no zeros. The numerator is clearly
positive for $(x,y)=(0,0)$, so it is positive everywhere.
\end{proof}

A key point for the following is that $f:W\mapsto \mathcal A$ is actually
a bijection:
\begin{theorem}
\label{th:crazy1}
  The application $f$ is a diffeomorphism from $W$ to $\mathcal A$. In particular, $f(W)=\mathcal A$.
\end{theorem}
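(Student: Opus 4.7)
The plan is to apply Hadamard's global inverse function theorem. By Theorem~\ref{jacob2}, $f$ is a $C^\infty$ local diffeomorphism; so it remains to prove that $f$ is \emph{proper} (then $f$ is a covering map between Hausdorff, locally path-connected spaces), and to observe that both source $W$ and target $\mathcal{A}$ are simply connected, which makes the covering trivial and hence $f$ a global diffeomorphism. Simple connectedness is immediate: $\mathcal{A}=\inte{\tri}\times\bbR^2$ is convex, and $W$ deformation-retracts onto any of its convex ellipse fibres $\{(a,b)\}\times\Circle_{abc}$.

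The core of the proof is therefore properness. Since $W$ is bounded in $\bbR^4$, it suffices to rule out that a sequence $w_n=(a_n,b_n,x_n,y_n)\in W$ simultaneously satisfies $w_n\to w^*\in\overline W\setminus W$ and $f(w_n)\to z^*\in \mathcal{A}$. Were the latter to hold, $(z_1(w_n),z_2(w_n))$ would remain at distance at least some $\eta>0$ from $\partial\tri$, which via the explicit formulas \eqref{eq:s,t} and \eqref{eq:32} translates into a uniform bound $Q_i(w_n)^2\le M(\eta)\,E(w_n)$ for each of the three numerators $Q_{bca},Q_{cab},Q_{abc}$ (here $E$ is the common value of $E_{bca}(u_n,v_n)=E_{cab}(u_n',v_n')=E_{abc}(u_n'',v_n'')$, cf.\ Remark~\ref{rem:tuttoliscio}). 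Two cases for $w^*\in\partial W$ arise. If $\min(a^*,b^*,c^*)>0$ and $(x^*,y^*)\in\partial\Circle_{a^*b^*c^*}$, then $E(w_n)\to 0$ by Remark~\ref{rem:tuttoliscio}, so the bound would force all three $Q_i\to 0$; but this contradicts Theorem~\ref{th:CLP}, according to which at every point of $\partial\Circle_{a^*b^*c^*}$ the slope $\nabla\bar\phi_{a^*b^*c^*}$ lies on a \emph{specific} side of $\partial\tri$, meaning the corresponding $Q_i/\sqrt E$ diverges and that $Q_i$ is bounded away from zero. If instead $\min(a_n,b_n,c_n)\to 0$, then $E(w_n)\le 3a_nb_nc_n\to 0$, and one must again exclude the simultaneous vanishing of all three $Q_i$'s at the limit point on the (possibly degenerate) ellipse. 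A direct algebraic computation from the closed-form expressions of the $Q_i$'s disposes of this: when for instance $a^*=0$ and $b^*,c^*>0$, the constraint $E>0$ forces $(u_n,v_n)$ to approach the line $\{\sqrt 3 c u+(1+b)v=0\}$, along which one finds
\[
Q_{cab}(u_n',v_n')\ \longrightarrow\ -\tfrac{\sqrt 3 c^*}{2}\Bigl((1-c^*)+\tfrac{16\,u^{*2}}{(2-c^*)^2}\Bigr)\ <\ 0,
\]
so $z_2\to -1$. Analogous computations handle the other single-vanishing cases by symmetry, and the algebraic identity
\[
\tfrac{\sqrt 3}{2}\bigl[(c-ab)+(a-bc)+(b-ac)\bigr]=\tfrac{\sqrt 3}{2}\bigl(1-(ab+bc+ca)\bigr)\ge \tfrac{\sqrt 3}{3}
\]
(valid for $a+b+c=1$, $a,b,c\ge 0$) controls the values of the three $Q_i$'s at the centre $(u,v)=(0,0)$ of the hexagon, which is the common limit point when two of $a,b,c$ vanish simultaneously.

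The main obstacle is this final case-by-case analysis at the boundary of $W$, especially at the corners of $\mathbb V$ where two of $a,b,c$ vanish and the ellipse $\Circle_{abc}$ collapses onto a line segment rather than to a point: there one must keep careful track of which of the three numerators $Q_i$ stays bounded away from zero along the degenerating locus. In every sub-case at least one does, thanks to the explicit expressions above together with the elementary inequality on $1-(ab+bc+ca)$. Once properness is secured, the covering-map argument is purely topological: $f$ is a proper local diffeomorphism onto the simply connected $\mathcal{A}$, hence a homeomorphism, hence (being also a local diffeomorphism) a global $C^\infty$ diffeomorphism, which yields both surjectivity and injectivity in the statement of Theorem~\ref{th:crazy1}.
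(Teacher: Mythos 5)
Your overall strategy (establish properness, then invoke a Hadamard-type theorem using that $W$ and $\mathcal A$ are simply connected) is sound, and is essentially the route the paper follows, though the paper proves the global invertibility by a direct flow argument rather than appealing to covering-space theory. Properness is where the two arguments diverge, and where yours has a genuine gap.

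You claim that whenever $\min(a_n,b_n,c_n)\to 0$ and $(x_n,y_n)$ stays inside the shrinking ellipse, at least one of the three numerators $Q_i$ stays bounded away from zero, so that $Q_i/\sqrt{E}\to\pm\infty$ and the slope escapes to $\partial\tri$. This is false near the tangency points where the inscribed ellipse meets the shrinking sides. Concretely, take $a\to 0$, $b+c=1$, and consider the point $(x,y)=(1/2,b/2)$, the vertex of the degenerating segment (it is the limit of the tangency point of $\Circle_{abc}$ with an $a$-side). Direct substitution into \eqref{eq:s,t}--\eqref{eq:32} with $a=0$ gives
\[
Q_{bca}(u,v)\Big|_{(1/2,\,b/2)}=Q_{cab}(u',v')\Big|_{(1/2,\,b/2)}=Q_{abc}(u'',v'')\Big|_{(1/2,\,b/2)}=0,
\]
so \emph{all three} numerators vanish there; more generally, along the degenerating segment $\{y=bx\}$ one finds $Q_{cab}(u',v')=\tfrac{\sqrt3\,bc}{2}(4x^2-1)$, which is nonpositive but \emph{vanishes} at $x=\pm\tfrac12$. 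Your formula with the factor $(1-c^*)+16u^{*2}/(2-c^*)^2$ misses these endpoints and leads to the incorrect conclusion that $Q_{cab}$ is uniformly bounded below in absolute value. Consequently, for sequences $(w_n)$ with $a_n\to 0$ and $(x_n,y_n)$ approaching such a corner, the bound $Q_i^2\le M(\eta)E$ for all $i$ can be consistent, the slope $(z_1,z_2)$ need \emph{not} escape to $\partial\tri$, and your argument produces no contradiction.

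This is exactly the case the paper singles out (Proposition~\ref{th:bordobordo}, Case (B), the sub-case $\max_i|Q_i/\sqrt{E}|\le\delta_n^{-1/8}$): the point $(x_n,y_n)$ is then forced away from the center, but the slope may still lie in $\inte\tri$, and properness is recovered not from the slope but from the Hessian: using \eqref{eq:37}, one of the directional derivatives $\partial_v z_i$ along the level line of $E$ is of order $\delta_n^{-1/4}$, so $z_{11}^2+z_{12}^2\to\infty$ and $f(w_n)$ still leaves every compact subset of $\mathcal A$. Without this second mechanism (Option 2 in the paper), your properness argument is incomplete, and the diffeomorphism claim does not follow.
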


The non-trivial step is to prove that points on the boundary of $W$ are
mapped through $f$ to points on the boundary of $\mathcal A$
(Proposition \ref{th:bordobordo}). Given this, the proof of Theorem
\ref{th:crazy1} follows rather closely that of a theorem of Hadamard \cite{Hadamard},
 that gives a necessary and sufficient condition for a smooth
map from $\bbR^n$ to $\bbR^n$ to be a diffeomorphism, cf. for instance
 \cite{Gordon}.

\begin{proposition}[Compact sets have compact pre-images]
\label{th:bordobordo}
  Let $\{w_n\}_{ n\ge1}$ be a sequence of points in $W$, that tends  as $n\to\infty$ to a
  point $\bar w$ on  the
  boundary of $W$. Then, none of the sub-sequential limits of the
  sequence $\{f(w_n)\}_{ n\ge1}$
 is in  $\mathcal A$. 
\end{proposition}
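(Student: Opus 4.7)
The plan is to identify the topological boundary of $W$ and examine the behaviour of $f$ on each of its pieces. Since $W = \{(a,b,x,y) : a>0, b>0, 1-a-b>0, (x,y)\in \Circle_{abc}\}$, a point $\bar w\in\partial W$ must belong to (i) the \emph{internal boundary} $\{(x,y)\in\partial \Circle_{abc}\}$ with $a,b,c$ bounded away from $0$; or (ii) the \emph{parameter boundary}, where at least one of $a,b,c$ tends to $0$. To place $\lim f(w_n)$ outside $\mathcal A$ it suffices to show that the gradient coordinates $(z_1,z_2)=\nabla\bar\phi_{abc}(x,y)$ approach $\partial\tri$; control of $(z_{11},z_{12})$ is unnecessary (they may blow up).

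Case (i) is essentially a direct consequence of Theorem \ref{th:CLP}, which asserts that $\nabla\bar\phi_{abc}$ lies on $\partial\tri$ at every point of $\partial\Circle_{abc}$. To argue from the explicit formulas in \eqref{eq:s,t}, as $(x_n,y_n)\to\partial\Circle_{abc}$ one has $E_{bca}(u_n,v_n)\to 0^+$, while $Q_{bca}$ tends to the boundary value of its restriction to $\bar{\Circle}_{abc}$. Since the zero locus of $Q_{bca}$ inside the liquid region is the (analytic) level curve $\{z_1=-1/2\}$ which is disjoint from $\partial\Circle_{abc}$, $Q_{bca}$ is bounded away from $0$ on that boundary, so the argument of $\cot^{-1}$ diverges to $\pm\infty$ and $z_1\to 0$ or $-1$. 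Repeating for $z_2$ (and if necessary $z_3$) via the permuted formulas gives $(z_1,z_2)\in\partial\tri$ in the limit.

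For Case (ii) assume first that $a_n\to 0$ with $b_n,c_n$ bounded away from $0$; the other single-degeneration cases are analogous after permuting the roles of $z_1,z_2,z_3$. Inside the ellipse one has the a priori bound $0\le E_{bca_n}(u_n,v_n)\le 3 a_n b_n c_n$, which forces $E_{bca_n}(u_n,v_n)\to 0$, and moreover, using the positive definiteness of the quadratic part in $(u,v)$, forces $(x_n,y_n)\to (0,0)$ (the center of the hexagon). Evaluating at the origin yields
\begin{equation*}
Q_{bca}(0,0)=\tfrac{\sqrt 3}{2}\bigl(c^{2}+bc+ca-ab\bigr)\longrightarrow \tfrac{\sqrt 3}{2}c(b+c)>0,
\end{equation*}
so once again $Q_{bca_n}/\sqrt{E_{bca_n}}\to\pm\infty$ and $z_1\to 0$ or $-1$. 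If two (or all three) of $a_n,b_n,c_n$ tend to $0$ simultaneously, one picks the appropriate slope variable (one of $z_1,z_2,z_3$) for which the analogous $Q$ stays bounded away from $0$ while the corresponding $E$ still vanishes; this is possible because the three numerators $Q_{bca},Q_{cab},Q_{abc}$ at the origin evaluate to three quantities that cannot vanish simultaneously for sides of a degenerating hexagon.

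The only genuine obstacle is to rule out \emph{accidental cancellations} in which $Q/\sqrt{E}$ has a finite limit, which would allow $z_1$ to converge to an interior value of $(-1,0)$. As indicated above, in Case (i) this is excluded by the explicit geometric identification of the zero set of $Q_{bca}$ inside $\Circle_{abc}$ with a level curve of $z_1$ not touching the arctic ellipse; in Case (ii) the forcing of $(x_n,y_n)$ to the center of the hexagon, combined with the explicit nonvanishing of the relevant $Q$ at the origin, does the job. Once these non-degeneracy checks are performed, one concludes $(z_1,z_2)\to\partial\tri$ in every subsequential limit, hence no such limit lies in $\mathcal A$, proving the proposition.
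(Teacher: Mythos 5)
Your Case (i) matches the paper (both just invoke Theorem~\ref{th:CLP}, giving Option~1: $(z_1,z_2)\to\partial\tri$). But Case~(ii) — the degenerating hexagon — contains a genuine error, and it is exactly the part of Proposition~\ref{th:bordobordo} that requires the real work.

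You claim that when, say, $a_n\to 0$, the bound $0\le E\le 3abc\to 0$ together with ``positive definiteness of the quadratic part in $(u,v)$'' forces $(x_n,y_n)\to(0,0)$. That inference would require the smallest eigenvalue of the quadratic form in $E_{abc}$ to stay bounded below, but it does not: a direct computation gives
\[
\det\!\begin{pmatrix}3(a+c)^2 & -\sqrt3(a+2b+c)(a-c)\\ -\sqrt3(a+2b+c)(a-c)& (a+2b+c)^2-4ac\end{pmatrix}=48\,abc,
\]
so the quadratic form degenerates exactly in the limit you are studying. Concretely, for $a\to0$ with $b,c$ bounded away from zero, the ellipse $\Circle_{abc}$ collapses along one axis only (its short semi-axis is $\asymp\sqrt{ab/c}$ while the long one tends to $\sqrt{3}\,c/4$), so points $(x_n,y_n)$ can remain bounded away from the centre while $E(x_n,y_n)\to0$. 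Your evaluation of $Q$ at $(0,0)$ is therefore at the wrong point and does not establish that $Q/\sqrt E\to\pm\infty$; it can stay bounded.

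This is not just a technical slip: the intended conclusion of your argument (that the slope always tends to $\partial\tri$) is in fact \emph{false} in Case~(ii). At points bounded away from the centre where all three ratios $|Q^{(i)}_{a_nb_nc_n}/\sqrt{E_{a_nb_nc_n}}|$ stay bounded, $(z_1,z_2)$ converges to an interior slope. What fails instead — and this is the missing half of the proof — is that the Hessian entries $(z_{11},z_{12})$ blow up. The paper handles this by a dichotomy on the size of $|Q^{(i)}/\sqrt E|$: if it is large for some $i$, the slope degenerates; if it is small for all $i$, then $(x_n,y_n)$ is automatically bounded away from $(0,0)$ (since $Q^{(1)}(0,0)$ is bounded away from zero while $\sqrt E$ is small there), and one then computes the derivative of $z_i$ along the tangent to the level line of $E$ to get $|\partial_v z_i|\gtrsim \delta_n^{-1/4}|\partial_v Q^{(i)}|\to\infty$ for some $i$, using the explicit gradients of the $Q^{(i)}$ to check that at least one $|\partial_v Q^{(i)}|$ is bounded below. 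Your proof does not address this regime at all, and ruling it out is not possible — one must prove the Hessian blow-up directly.
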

\begin{proof}[Proof of Proposition \ref{th:bordobordo}]

Recall that, for $w\in W$, we write $w=(a,b,x,y)$ and $f(w)=(z_1,z_2,z_{11},z_{12})$. Note that
\begin{eqnarray}
  \label{eq:34}
  \partial W=\{w: (a,b)\in \partial \mathbb V\}\cup \{w:(a,b)\in \stackrel
  \circ {\mathbb V}, (x,y)\in \partial \Circle_{abc}\}.
\end{eqnarray}
Therefore, if $\bar w=(\bar a, \bar b,\bar x, \bar y)=\lim_n
w_n\in\partial W$, exactly one of these two
conditions holds:
\begin{enumerate}
\item [(A)]$(\bar a,\bar b)\in \stackrel \circ {\mathbb V}$
  and $(\bar x,\bar y)$ is on the boundary of $\Circle_{\bar a\bar b\bar c}$;
\item [(B)] $(\bar a,\bar b)$ is in $\partial \mathbb V$.
\end{enumerate}
We have to prove that, in both cases, at least one of the following two
options occurs: 
\begin{enumerate}
\item [(Option 1)] $(z_1,z_2)$ approaches the boundary of $\tri$  as $n\to\infty$;
\item [(Option 2)] the directional derivative $|\partial_v z_i|$ diverges  as
  $n\to\infty$, for some $i=1,2,3$ and for some direction $v$ in the
  plane. This implies that either $(z_1,z_2)$ approaches $\partial \tri$,
  or $z_{11}^2+z_{12}^2$ 
  diverges, in both cases implying the statement of the Proposition. 

Indeed, recall that $z_3=-1-z_1-z_2$ and observe that $\partial_v z_i$
is a linear combination of $\partial_x z_i $ and $\partial_y z_i$. If
$|\partial_v z_i|$ diverges, then  one among $z_{11}=\partial_x
z_1$, $z_{12}=\partial_x z_2=\partial_y z_1$ or $\partial_y z_2$ diverge. 
If either $z_{11}$ or $z_{12}$ diverges, we are done. So suppose instead that
$\partial_y z_2$ diverges. Remember that
\begin{multline}
a_{11}(z_1,z_2)\partial_x z_1+2a_{12}(z_1,z_2)\partial_x
z_2+a_{22}(z_1,z_2)\partial_y z_2\\
=a_{11}(z_1,z_2)z_{11}+2a_{12}(z_1,z_2)z_{12}+a_{22}(z_1,z_2)\partial_y z_2
=0
\end{multline}
and that, when $(z_1,z_2)$ is bounded away from
$\partial \tri$, 
$a_{ij}$ are finite and $a_{ii}$ are strictly positive. As a
consequence, 
if $\partial_y z_2$ diverges then either $(z_1,z_2)$ approaches
$\partial \tri$ or at least one among $z_{11}$
and $z_{12}$ also diverges.
\end{enumerate}

In Case (A), it follows directly from Theorem \ref{th:CLP} that the
slope $(z_1,z_2)$ approaches the boundary of $\tri$, so Option 1
occurs.

In Case (B) 
we have to go back to formulas \eqref{eq:s,t}-\eqref{eq:32} for
$z_1,z_2$ and $z_3=-1-z_1-z_2$, that we rewrite compactly as
\begin{eqnarray}
  \label{eq:35}
  z_i(x,y)=-\frac1\pi\cot^{-1}\left[\frac{Q_{abc}^{(i)}(x,y)}{\sqrt{E_{abc}^{(i)}(x,y)}}\right],\quad i=1,2,3
\end{eqnarray}
(recall that actually $E_{abc}^{(i)}$ does not depend on $i$, so we will
just write $E_{abc}$).
The numerators $Q_{abc}^{(i)}$ are
second-order polynomials in $x,y$, symmetric under
$(x,y)\leftrightarrow (-x,-y)$. Let $Z_{abc}^{(i)}, i=1,2,3$ be the
respective level-zero sets on the $(x,y)$ plane: they are
hyperbolas, that can be degenerate (two
straight lines intersecting at $(0,0)$)
for particular values of $a,b,c=1-a-b$. More precisely, $Z_{abc}^{(i)}$ is
degenerate if and only if $Q_{abc}^{(i)}(0,0)=0$.  It is however easy to check that
there are no values $a,b,c=1-a-b$ for which the three hyperbolas are
simultaneously degenerate: an explicit calculation
shows  that $\sum_{i=1}^3 Q_{abc}^{(i)}(0,0)$ is never zero. Therefore, 
for $n\to\infty$, at least one of the curves $Z_{a_n b_n c_n}^{(i)}$
tends to a non-degenerate
hyperbola. To fix ideas, let us assume that this is the case for
$i=1$, i.e. that 
\begin{eqnarray}
  \label{eq:38}
  \liminf_n |Q_{a_nb_nc_n}^{(1)}(0,0)|>0.
\end{eqnarray}

Then we proceed as follows. We first note that  
  the sup-norm of $E_{a_nb_nc_n}$ on
  $\Circle_{a_n b_n c_n}$ is just $\delta_n:=E_{a_nb_nc_n}(0,0)=3a_n b_n
  c_n=o(1)$ (the graph of $E_{abc}$ is a concave paraboloid with
  gradient zero at $(0,0)$ and vanishes at the boundary of $\Circle_{abc}$).
The fact that $a_nb_nc_n=o(1)$ is because when $(a_n,b_n)$ approaches
$\partial\mathbb V$, at least one of the three values
$a_n,b_n,1-a_n-b_n$ approaches zero.
Given $(x_n,y_n)\in \Circle_{a_n b_n c_n}$, if  for some $i=1,2,3$
\[\left|\frac{Q_{a_nb_nc_n}^{(i)}(x_n,y_n)}{\sqrt{E_{a_nb_nc_n}(x_n,y_n)}}\right|\ge\frac1{\delta_n^{1/8}}
\]
then, for $n\to\infty$, $z_i$
approaches either $0$ or $1$ (according to the sign of $Q_{a_nb_nc_n}^{(i)}/\sqrt{E_{a_nb_nc_n}} $), and
therefore Option 1 occurs.

Assume instead that at $(x_n,y_n)$ one has
  \begin{eqnarray}
    \label{eq:30}
\max_i |Q_{a_nb_nc_n}^{(i)}/\sqrt{E_{a_nb_nc_n}}|\le\frac1{\delta_n^{1/8}}.
  \end{eqnarray}
Note that automatically $(x_n,y_n)$ is bounded away from $(0,0)$, otherwise condition
\eqref{eq:30} would be violated, since  one would
have \[
\left|\frac{
Q_{a_nb_nc_n}^{(1)}(x_n,y_n)}{\sqrt{E_{a_nb_nc_n}(x_n,y_n)}}\right|\ge
\frac{\left|
Q_{a_nb_nc_n}^{(1)}(x_n,y_n)\right|}{\sqrt{\delta_n}}
\simeq
\frac{\left|Q_{a_nb_nc_n}^{(i)}(0,0)\right|}{\sqrt{\delta_n}}\]
and $Q_{a_nb_nc_n}^{(1)}(0,0)$ is bounded away from zero for $n\to\infty$, cf. \eqref{eq:38}.
We look at the derivative of $z_i$ at $(x_n,y_n)$ in the direction $v$ tangent to  the local level
line of $E_{a_nb_nc_n}$:
we get \[\partial_v
  z_i=-\frac1\pi\frac1{1+(Q_{a_nb_nc_n}^{(i)}/\sqrt{E_{a_nb_nc_n}})^2}\frac{\partial_v
    Q_{a_nb_nc_n}^{(i)}}{\sqrt{E_{a_nb_nc_n}}}.\]
Therefore, 
\begin{eqnarray}
  \label{eq:36}
  |\partial_v z_i|\ge const.\times \delta_n^{2/8-1/2}|\partial_v
    Q_{a_nb_nc_n}^{(i)}|=const.\times \delta_n^{-1/4}|\partial_v
    Q_{a_nb_nc_n}^{(i)}|.
\end{eqnarray}
If we can prove that $|\partial_v
    Q_{a_nb_nc_n}^{(i)}|$ stays bounded away from zero as $n\to\infty$ for at
    least one value of $i$, we get that $|\partial_v z_i|$ diverges
    and we can conclude that Option 2 occurs.
    To control $|\partial_v
    Q_{a_nb_nc_n}^{(i)}|$, observe that 
    \begin{equation}
      \label{eq:37}
      \begin{split}
      \nabla Q_{a_nb_nc_n}^{(1)}&=2\sqrt3(-2x+y,x)\\
      \nabla Q_{a_nb_nc_n}^{(2)}&=2\sqrt3(y,-2y+x)\\
      \nabla Q_{a_nb_nc_n}^{(3)}&=2\sqrt3(-y,-x).
   \end{split}  \end{equation}
From these explicit formulas it is immediate to check that, whenever $(x,y)\ne (0,0)$, all three
gradients have non-zero norm and  that there are at
least two of them  that are not colinear\footnote{Just compute
  $\nabla Q_{a_nb_nc_n}^{(i)}\cdot [\nabla Q_{a_nb_nc_n}^{(j)}]^\bot$ for all $i\ne j$, with $v^\bot$ the vector $v$ rotated by $\pi/2$, and
check that only for $(x,y)=0$ the three products vanish simultaneously.
}. As a consequence (recalling that $(x_n,y_n)$ is bounded away from
$(0,0)$), for any
given direction $v$ one has that $|\partial_v
    Q_{a_nb_nc_n}^{(i)}|$ is bounded away from $0$ for at least one
    value of $i$, as we wished to show.

\end{proof}

\begin{proof}[Proof of Theorem \ref{th:crazy1}] 

{\bf Point (I): $f$ is surjective ($f(W)=\mathcal A$).}
Fix some $\bar w\in W$ and let $\bar z=f(\bar w)$. We define on $\mathcal A$
the radial vector field $v(z)=\bar z-z$ that points everywhere towards $\bar z$. Given $z\in \mathcal A$, we let for
$t\ge 0$
\begin{eqnarray}
  \label{eq:yz}
y^z(t)=\bar z+e^{-t}(z-\bar z),  
\end{eqnarray}
that solves
\[
\frac d{dt}y^z(t)=v(y^z(t)),\quad y^z(0)=z
\]
and note that $\lim_{t\to\infty} y^z(t)=\bar z$.
Thanks to Theorem \ref{jacob2},  $\bar z$ is in the interior of  $f(W)$, so there
exists $0\le\tau^z<\infty$ such that  $y^z(\tau^z) \in f(W)$. Let
$w_{\tau^z}\in W$ be such that  $f(w_{\tau^z}) = y^z(\tau^z)$. We let $w^z(t)$ be the
solution of the differential equation on $W$
\begin{eqnarray}
  \label{eq:flusso}
\left\{
  \begin{array}{l}
    \frac d{dt }w^z(t) =V(w^z(t))\cdot
v(f(w^z(t)))\\
w^z(\tau^z)=w_{\tau^z},
  \end{array}
\right.
\end{eqnarray}
where
\begin{eqnarray}
  V(w)= [D f(w)]^{-1}.
\end{eqnarray}
The existence of the inverse of the matrix $D f(w)$ is guaranteed by
Theorem \ref{jacob2}.
The solution $w^z(t)$ exists at least locally around
$t=\tau^z$. 

Let $I$ be the interval of definition of the solution, and $\underline t=\inf\{s:s\in I\}$.
For every $t \in \bbR_+ \cap I $ we have $f(w^z(t)) = y^z(t)$ since, as
one easily checks,  both quantities verify the same differential equation
and take the same value for $t=\tau^z$. We wish to show that
$\underline t<0$,
so that $f(w^z(0)) = z$, i.e. $z\in f(W)$ and in turn (by the arbitrariness of
$z$) this implies $f(W)=\mathcal A$.

Let us assume by contradiction that $\underline t\ge0$. Recalling Remark
\ref{rem:tuttoliscio} and  Theorem
\ref{jacob2}, we have that  $Df(w)$ is
$C^\infty$ (actually analytic) in $W$ and $\det(D f(w))$ is bounded away from zero in
compact subsets of $W$. Therefore, the vector field $V(w)\cdot v(f(w))=[D
f(w)]^{-1}\cdot v(f(w))$ is $C^\infty$ and bounded, away from the boundary of $W$.
As a consequence, we have that there exists a sequence $s_n \searrow
\underline t$ such that $w^z(s_n)$ approaches $\partial W$ as
$n\to\infty$ (otherwise the solution could be extended to short times before $\underline t$).  By Proposition \ref{th:bordobordo}, one deduces that
the sequence $f(w^z(s_n))=y^z(s_n)$ cannot have a limit in $\mathcal
A$. However, from \eqref{eq:yz} such limit exists and is simply $\bar z+e^{-\underline
  t}(z-\bar z)$, which belongs to $\mathcal A$ (recall that $z,\bar z$
are in $\mathcal A $, remark that $\mathcal A$ is convex and that $\bar z+e^{-\underline
  t}(z-\bar z)$ is a
convex combination of $z$ and $\bar z$).

{\bf Point (II): $f$ is bijective and  a diffeomorphism.}
We know from point (I) that $f$ is surjective, and from Theorem \ref{jacob2} that
it is a local diffeomorphism.
It remains only to prove that $f^{-1}(\bar z)$ is uniquely defined for
every $\bar z\in \mathcal A$ (injectivity).  This is essentially
identical to the proof of injectivity in Hadamard's theorem
(cf. \cite[Theorem A]{Gordon}), so we will just sketch the main steps.

First, the set $f^{-1}(\bar z)=\{w\in W:f(w)=\bar z\}$ is finite:
otherwise, by Proposition \ref{th:bordobordo} (compact sets have
compact pre-images) it would contain an accumulation point $w_\infty$
in $W$. This would contradict Theorem \ref{jacob2}, since the determinant of
$Df(w_\infty)$ is non-zero, so that $f$ is locally one-to-one in a
neighborhood of $w_\infty$.  

Second, to each $w_i\in f^{-1}(\bar z)$ is associated the
set \[W_i=\{w_0\in W: \lim_{t\to\infty}w(t)=w_i\},\] with $w(t)$ the
solution of the Cauchy problem 
\begin{eqnarray}
  \label{eq:flusso2}
\left\{
  \begin{array}{l}
    \frac d{dt }w(t) =V(w(t))\cdot
v(f(w(t)))\\
w(0)=w_0.
  \end{array}
\right.
\end{eqnarray}  Recall that $f(w(t))= y^{z_0}(t)$, with $y^{z_0}(t)$ defined in
\eqref{eq:yz} and $z_0=f(w_0)$. Since $y^{z_0}(t)$ stays in a compact set
uniformly for $t\ge0$, 
using again Proposition
\ref{th:bordobordo} we see that $w(t)$ exists for all positive times
(it never approaches the boundary of $W$). But 
$y^{z_0}(t)$ converges to $\bar
z$ as $t\to\infty$, so that $w(t)$ tends to an inverse of $\bar z$: thanks to
the arbitrariness of $w_0$, this implies
that $W=\cup_i W_i$. Moreover, each
$W_i$ is open,  by continuity of solutions of \eqref{eq:flusso2}
with respect to initial conditions. Given that the $W_i$ are disjoint
and that $W$ is open and connected, one deduces that $f^{-1}(\bar z)$
contains a single element.
\end{proof}

\section{Monotonicity and constrained dynamics}
\label{sec:monoton}
As well as in previous works on lozenge dynamics
\cite{Wilson,CMT,CMST,dominos}, monotonicity will play an important
role. Let us briefly recall the basic idea.

 In the set of stepped monotone interfaces we introduce a partial
order where $h\le h'$ if $h(v)\le h'(v)$ for every $v$. It is
well known that dynamics conserves the partial order: give a discrete domain $U_L$, it is possible
to couple in the same probability space all the evolutions
$h^{\eta;\varphi_L}_t$ with boundary height $\varphi_L$ and   initial condition $\eta\in \Omega_{U_L,\varphi_L}$  in such a way that, $\bbP$-almost surely,
\begin{eqnarray}
  \label{eq:19}
  h^{\eta,\varphi_L}_t\le h^{\eta',\varphi'_L}_t \quad\text{for every
  }\quad t\ge 0, \quad \text{if}\; \eta\le \eta'  \; \text{and}\; \varphi_L\le \varphi'_L.
\end{eqnarray}

An immediate consequence on the equilibrium measures is that $\pi^{\varphi_L}_{U_L}$ is
stochastically dominated by $\pi^{\varphi'_L}_{U_L}$.

\medskip

Consider two stepped monotone surfaces with height functions $h^-,h^+$
such that $h^-\le h^+$. Let moreover $U_L$ be a discrete domain and
$\varphi_L$ be a boundary height such that $h^-\le \varphi_L\le h^+$
on $U^{ext}$.
The dynamics in $U_L$ with boundary height $\varphi_L$, ``ceiling''
$h^+$ and ``floor'' $h^-$ is defined as the usual dynamics $h_t$,
except
that any updates that would lead to a violation of the inequalities
\[
h^-\le h_t\le h^+
\]
are discarded (censored).
Of course, we will assume that the initial condition $\eta$ does satisfy
$h^-\le \eta \le h^+$. The invariant measure of the constrained
dynamics is simply the uniform measure $\pi_{U_L}^{\varphi_L}$
conditioned on the interface being between floor and ceiling, i.e. 
\[
\pi_{U_L}^{\varphi_L}(\cdot|h^-\le \cdot\le h^+).
\]

Define the distance between floor and ceiling as \[\max_{v\in
  U_L}(h^+(v)-h^-(v)).\] Then:
\begin{lemma}
\label{lemma:Wilson}\cite[Theorem 4.3]{CMT}
The Glauber dynamics in a discrete domain $U_L$ of diameter $D$ in the graph-distance, with floor and ceiling at distance $H/L$, has 
 $\tmix=O(D^2 H^2(\log D)^2)$.  
\end{lemma}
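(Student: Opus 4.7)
My plan is to bound the mixing time by the coalescence time of two coupled evolutions $h^+_t, h^-_t$ started respectively from the maximal and minimal configurations in $\Omega_{U_L,\varphi_L}$ that are compatible with floor $h^-$ and ceiling $h^+$, and then to estimate this coalescence time via a Wilson-type eigenfunction argument applied to the discrepancy $h^+_t - h^-_t$.

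First, I construct the global monotone coupling of the constrained dynamics in which all initial conditions $\eta$ with $h^- \leq \eta \leq h^+$ are coupled so that the partial order is preserved throughout the evolution; censoring updates that would violate $h^- \leq h_t \leq h^+$ still preserves this monotonicity (this is where being in the monotone setting is essential). The standard coupling argument gives
\[
\max_\eta \norm{\mu^\eta_t - \pi_{U_L}^{\varphi_L}(\cdot \mid h^- \leq \cdot \leq h^+)} \leq \bbP\bigl(h^+_t \neq h^-_t\bigr),
\]
so it suffices to show $\bbP(h^+_t \neq h^-_t) \leq 1/(2e)$ at time $t = C D^2 H^2(\log D)^2$.

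Second, let $\psi \colon U_L \to \bbR_+$ be a positive test function vanishing on $\partial U_L$, obtained as a discretization of the principal Dirichlet eigenfunction of a smoothed version of $U_L$, with eigenvalue $\lambda \asymp D^{-2}$ and $\max\psi \leq 1$. Set
\[
\Phi_t := \sum_{v \in U_L} \psi(v)\,\bigl(h^+_t(v) - h^-_t(v)\bigr).
\]
Computing the generator of the coupled dynamics on $\Phi_t$, one finds that each elementary update acts on $\Phi_t$ essentially as a discrete Laplacian of $\psi$ applied to $h^+ - h^-$; since updates are censored only in a direction which can only decrease the discrepancy (or leave it unchanged), the sign of the Laplacian estimate is preserved, and the standard computation yields
\[
\frac{d}{dt}\esp[\Phi_t] \leq -\frac{c}{L^2 D^2}\,\esp[\Phi_t],
\qquad\text{hence}\qquad
\esp[\Phi_t] \leq \Phi_0\, e^{-c t/(L^2 D^2)}.
\]

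Third, to convert this weighted $L^1$-decay into $L^\infty$ coalescence $h^+_t \equiv h^-_t$, I use the a priori bound $h^+_t - h^-_t \leq H/L$ and observe that in any mesoscopic region where the discrepancy is still nonzero, the relative dynamics realizes a rate-one local random walk, so the discrepancy on such a region vanishes on time scale $L^2 H^2$. Iterating the previous decay in a sub-multiplicative fashion and halving the failure probability at each iteration yields a coalescence time of order $D^2 H^2 \log D$; the remaining factor of $\log D$ in the lemma comes from the usual boosting from expected coalescence to total variation distance below $1/(2e)$.

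The main obstacle is the third step: the floor/ceiling constraints break the symmetry of elementary updates, so that the martingale/Dirichlet form computation for $h^+_t - h^-_t$ has to be handled carefully near the obstacles $h^\pm$. A convenient device is the Peres--Winkler censoring inequality, which allows one to dominate the constrained dynamics by an auxiliary (simpler) chain in which the problematic boundary updates have been systematically censored, and then transfer the spectral estimates back to the original constrained chain without losing more than constant factors.
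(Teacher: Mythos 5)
Your global plan (reduce to coalescence of the extremal trajectories via the monotone grand coupling, then control the coalescence time by a Wilson-type eigenfunction argument) is in the right spirit, and the first step — the inequality $\max_\eta\norm{\mu^\eta_t-\pi(\cdot\mid h^-\le\cdot\le h^+)}\le\bbP(h^+_t\neq h^-_t)$ — is correct and standard. But the core of the argument, your second step, is not a ``standard computation'': the claim that an elementary \emph{single-site} Glauber update acts on $\Phi_t=\sum_v\psi(v)\bigl(h^+_t(v)-h^-_t(v)\bigr)$ as a discrete Laplacian of $\psi$ is precisely the assertion that is known \emph{not} to hold for the local lozenge chain. Wilson's Laplacian-domination lemma is proved for the nonlocal column-toggle chain, and the reason pre-\cite{CMT} bounds on the local chain were as poor as $O(L^6\log L)$ (by comparison) is that nobody knows how to run the eigenfunction argument directly on the local chain. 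You assert the key lemma where the difficulty lives, without proof.

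There is also an internal inconsistency that should have been a red flag. If your drift inequality $\frac{d}{dt}\esp[\Phi_t]\le-\lambda\esp[\Phi_t]$ (with $\lambda\asymp D^{-2}$; the extra factor $L^{-2}$ you insert is a scaling error — the discrete Dirichlet eigenvalue of a lattice domain of graph-diameter $D$ is of order $D^{-2}$ with no $L$-dependence) were correct, then together with $\Phi_0\le (H/L)\,\lvert U_L\rvert\,\max\psi$ and a Markov bound it would yield a coalescence time of order $D^2\log(HD)$, which is \emph{smaller} than the claimed $D^2H^2(\log D)^2$. A purported proof that quietly proves a strictly stronger bound than the cited theorem should prompt you to look for the mistake. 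Your third step then tries to reinject the missing $H^2$ via a hand-wavy ``the relative dynamics realizes a rate-one local random walk, so the discrepancy vanishes on time scale $L^2H^2$''; this is not a derivation, and the discrepancy process is in no sense a simple random walk. The appeal to the Peres--Winkler censoring inequality at the end is also too vague to fill the hole.

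For orientation: the proof in \cite{CMT} establishes the Wilson-type eigenfunction decay for an \emph{auxiliary} chain (column / tower resampling moves, for which the required Laplacian monotonicity lemma can be proved) and then transfers the spectral-gap estimate to the local Glauber chain via a comparison (Diaconis--Saloff-Coste) argument with canonical paths. When the floor and ceiling are at distance $H/L$, each nonlocal column move is implemented by $O(H)$ local moves with $O(H)$ congestion, which is exactly where the factor $H^2$ enters. The logarithmic factors then come from the usual conversion between the spectral gap and the total-variation mixing time. So the key lemma you need and omit is not the eigenfunction decay on the local chain — that is false as you state it — but the Laplacian domination on the nonlocal chain together with a path comparison that produces the $H^2$.
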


Take $U_L$ to be the discretization of a domain $U$, so that its graph-distance diameter $D$ is of order $L$.
Note that, if we let $h^\pm=\varphi_L\pm A $ with $A=O(1)$ sufficiently
large then the constrained
dynamics exactly coincides with the unconstrained one. This is simply
because the height functions $h$ and $\varphi_L$ change by $\pm 1/L$
or $0$ along edges of $\mathcal T_L$: if
$u\in \partial U_L$ and $v\in U^{int}$ and $u,v$ are at graph-distance
$d(u,v)$, one has
\[h(v)\le h(u)+d(u,v)/L\] and \[h^+(v)\ge h^+(u)-d(u,v)/L=\varphi_L(u)+A
-d(u,v)/L.\] Given that $h(u)=\varphi_L(u)$ on $\partial U_L$, we see
that
\[
h^+(v)\ge h(u)+A-d(u,v)/L\ge h(v)+A-2d(u,v)/L.\]
If $A$ is chosen larger than
$2D/L=O(1)$ we have then $h(v)\le
h^+(v)$ (and analogously $h(v)\ge h^-(v)$)  deterministically. Hence 
the floor/ceiling constraints are automatically satisfied by the unconstrained dynamics.
On the other hand, if $h^\pm=\varphi_L\pm A$ then  the distance between floor and ceiling is $2A$.
Therefore, an immediate consequence of Lemma
\ref{lemma:Wilson} is:
\begin{corollary}
\label{cor:tmix}
Let the discrete domain $U_L$  be a discretization of a domain $U$.
For any boundary height $\varphi_L$,
  the mixing time of the Glauber dynamics with neither floor nor ceiling
  is smaller than $C\,L^4 (\log L)^2$ for some constant $C$ depending only on $U$.
\end{corollary}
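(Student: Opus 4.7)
The plan is to deduce the corollary directly from Lemma \ref{lemma:Wilson} by trapping the unconstrained dynamics between a floor and a ceiling that lie at \emph{bounded} vertical distance from $\varphi_L$: the Lipschitz property of stepped height functions ensures that such a floor/ceiling is never actually felt by the dynamics, so the constrained and unconstrained mixing times coincide.

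First, by Definition \ref{def:dbh}, the boundary height $\varphi_L$ is (by construction) the restriction to $U_L^{ext}$ of the height function of some stepped monotone surface defined on all of $\mathcal T_L$; I will abuse notation and still call this global extension $\varphi_L$. Because $U_L$ is a discretization of a bounded domain $U$, its graph-distance diameter $D$ is of order $L$, with an implicit constant depending only on $U$. Along any edge of $\mathcal T_L$ the height function of a stepped monotone surface changes by $0$ or $\pm 1/L$, so for every admissible $h\in\Omega_{U_L,\varphi_L}$ and every $v\in U_L$, comparing $h$ and $\varphi_L$ to their common value at a nearest boundary vertex gives
\begin{equation*}
|h(v)-\varphi_L(v)|\le 2D/L.
\end{equation*}

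Therefore, fixing any constant $A>2D/L$ (which is $O(1)$ and depends only on $U$) and setting $h^\pm := \varphi_L\pm A$, we have deterministically $h^-\le h\le h^+$ for every $h\in\Omega_{U_L,\varphi_L}$. The surfaces $h^\pm$ themselves are valid stepped monotone surfaces, and no update of the unconstrained chain can ever violate either inequality; hence the dynamics with floor $h^-$ and ceiling $h^+$ is indistinguishable from the unconstrained one, and in particular they have the same mixing time.

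Finally I invoke Lemma \ref{lemma:Wilson} with $D=O(L)$ and with floor-ceiling distance $2A$, i.e.\ $H/L=2A$, so $H=O(L)$. This yields
\begin{equation*}
\tmix = O\bigl(D^2 H^2(\log D)^2\bigr) = O\bigl(L^4(\log L)^2\bigr),
\end{equation*}
with the constant depending only on $U$. There is essentially no obstacle in this argument beyond carefully recording the Lipschitz estimate and verifying that the dependence on $U$ is absorbed in the constants $D/L$ and $A$; the crux, Lemma \ref{lemma:Wilson}, is quoted as a black box.
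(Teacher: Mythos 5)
Your proof is correct and follows essentially the same route as the paper's: you introduce the floor/ceiling $h^\pm=\varphi_L\pm A$ with $A=O(1)$ chosen so that the Lipschitz property of stepped height functions makes the constraints vacuous, and then apply Lemma \ref{lemma:Wilson} with $D=O(L)$ and $H=O(L)$ to get $\tmix=O(L^4(\log L)^2)$. The only cosmetic difference is that you state the Lipschitz comparison $|h(v)-\varphi_L(v)|\le 2D/L$ directly, whereas the paper spells it out step by step via a boundary vertex; the content is identical.
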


\section{Proof of Theorem \ref{th:pratica2}
}
\label{sec:provadin}


Here we make a few comments about the idea of the proof and its structure.
Recall from the introduction that we want to show that the interface stays with very high probability
``trapped'' between two deterministic surfaces that evolve on  a time
scale just slower than diffusive
and both tend to the macroscopic shape.
We will only consider the upper bound in the following because the proof of the lower bound is identical.

The first step, that does not require much work, is to realize that it
is enough to prove that when the initial condition is at distance
$2\epsilon_L$ from equilibrium (for some suitably small $\epsilon_L$) then within time $L^{2+o(1)}$
the interface reaches distance $\epsilon_L$ (see Claim \ref{claim:i}). 
To prove this, the key point is Claim \ref{claim:j}, that says that the height function stays with high
probability below the
deterministically evolving interface  \[\tilde\phi_t:=\bp + \epsilon_L
(1-t/L^{2+o(1)}) \psi,\] with a well chosen function $\psi\ge2$, until the
time when $(1-t/L^{2+o(1)})  $ becomes sufficiently small. As mentioned
in the introduction, we prove the bound by looking at ``mesoscopic''
regions of size $L^{-1/2+o(1)}$ and at time increments $L^{1+o(1)}$,
that are
small with respect to the
diffusive scaling. The choice of $\psi$
will be justified in  Remark \ref{rq:choix_psi}. In practice, one must
guarantee that $\hat{\mathcal L} \psi$ (with $\hat{\mathcal L}$ the
linearization of the elliptic operator
in \eqref{eq:PDE}, that should determine the interface drift, see \eqref{eq:5}) is comparable with $
\Delta\psi$ (with $\Delta$ the usual Laplacian).

\medskip

For simplicity we will write $\pi_L$ for the equilibrium measure
$\pi_{U_L}^{\varphi_L}$ and as usual $\bar\phi$ denotes the
macroscopic shape in $U$ with boundary height $\varphi$.

Let $\epsilon_L=1/\log L$.
To prove Theorem \ref{th:pratica2} it is sufficient
to prove that
\begin{eqnarray}
  \label{eq:massima}
  \sup_{t>c(\delta)L^{2+\delta}}\bbP(\exists v\in U_L: h_t(v)-\bp(v)>2\epsilon_L)=o(1)
\end{eqnarray}
and 
\begin{eqnarray}
  \label{eq:minima}
  \sup_{t>c(\delta)L^{2+\delta}}\bbP(\exists v\in U_L: h_t(v)-\bp(v)<-2\epsilon_L)=o(1),
\end{eqnarray}
with bounds uniform in the initial condition $\eta$ (we  omit for lightness
the argument $\eta$ in $h^\eta_t$).
We will prove only \eqref{eq:massima}, the proof of \eqref{eq:minima} being essentially identical.

Let $G$ be a positive constant, independent of $\eta$ and $L$,  that will be fixed in a moment (it will depend only on the diameter of $U$).
We have:
 \begin{claim}
\label{claim:i}
   For 
 $i=0,1,\dots,\lfloor G/\epsilon_L\rfloor-1$,
 \begin{eqnarray}
   \label{eq:9}
\bbP( h_t\le G-i\epsilon_L +\bar\phi \text{ for every } t\in  [T_i, L^5]
)\ge 1-i/L
 \end{eqnarray}
where \[
T_i=    L^{2+\delta/2} i.\]
 \end{claim}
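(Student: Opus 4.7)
The plan is to prove Claim \ref{claim:i} by induction on $i$: the base case $i=0$ is a deterministic Lipschitz bound, and the inductive step is a single application of Claim \ref{claim:j} combined with a union bound.

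For $i = 0$, I need to produce a constant $G$ such that $h_t(v) \le \bp(v) + G$ holds deterministically for every $t \in [0, L^5]$ and every $v \in U_L$. The height function of any stepped monotone surface varies by at most $1/L$ across an edge of $\mathcal T_L$, and $\bp$ is uniformly Lipschitz on $U$ since it is non-extremal (its gradient stays at a positive distance from $\partial \tri$, and it is $C^\infty$ in $\inte U$). On $\partial U_L$ one has $h_t = \varphi_L$, an $O(1/L)$ discretization of $\bp|_{\partial U}$. Travelling from any interior $v$ to its nearest boundary vertex $u$ along a path in $\mathcal T_L$ of graph length $d(u,v) \le \mathrm{diam}(U_L) = O(L)$,
\[
h_t(v) - \bp(v) \le [h_t(u) - \bp(u)] + O(d(u,v)/L) = O(1),
\]
so $G$ may be chosen as any constant dominating this uniform bound. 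The claim at $i = 0$ then holds with probability $1$.

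For the inductive step, assume \eqref{eq:9} at level $i$ and denote by $\mathcal E_i$ the event appearing there. Condition on $\mathcal E_i$; then $h_{T_i}(v) \le \bp(v) + (G - i\epsilon_L)$ for every $v$. I would now invoke Claim \ref{claim:j} in its shifted form, starting from time $T_i$: this produces a deterministic evolving envelope
\[
\tilde\phi_t := \bp + (G - (i+1)\epsilon_L) + \epsilon_L \bigl(1 - (t-T_i)/L^{2+\delta/2}\bigr)_+ \psi,
\]
with $\psi \ge 2$ as in the discussion preceding the claim, such that $\tilde\phi_{T_i} \ge h_{T_i}$ on $\mathcal E_i$, $\tilde\phi_t \le \bp + (G - (i+1)\epsilon_L)$ for all $t \ge T_{i+1}$, and
\[
\bbP\bigl( h_t \le \tilde\phi_t \text{ for every } t \in [T_i, L^5] \bigr) \ge 1 - 1/L.
\]
Monotonicity enters only to launch Claim \ref{claim:j} from any upper bound on $h_{T_i}$, since $\tilde\phi_t$ is deterministic. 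Combining with the inductive hypothesis via a union bound gives $\bbP(\mathcal E_{i+1}) \ge 1 - (i+1)/L$.

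The main obstacle is not the induction itself, which is bookkeeping, but Claim \ref{claim:j}. Its proof is the technical heart of the paper: on the super-diffusive scale $L^{2+\delta/2}$ one must show that $h_t$ tracks the drift $\mu(\nabla\bp)\mathcal L \bp = 0$ closely enough for the shifted envelope $\tilde\phi_t$ to dominate it. As described in the introduction, this is to be done by analysing the dynamics on mesoscopic cells of size $L^{-1/2+\delta}$, using the bijection of Theorem \ref{th:crazy1} to match the local structure of $\bp$ with that of an appropriate hexagonal macroscopic shape, and then transferring the sharp equilibrium estimates of Theorem \ref{th:Petrov} (via Proposition \ref{th:Petrov2}) to the non-equilibrium setting. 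The condition $\psi \ge 2$ will be needed so that $\hat{\mathcal L}\psi$ dominates the diffusive term $\Delta \psi$, making $\tilde\phi_t$ a supersolution of the drift-diffusion equation approximating $h_t - \bp$; a secondary difficulty will be to choose $\psi$ with the right boundary behaviour so that the envelope is consistent with $\varphi_L$ along $\partial U_L$.
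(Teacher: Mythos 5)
Your proposal is correct and follows the paper's proof essentially line by line: base case by the Lipschitz argument, inductive step by reduction to Claim \ref{claim:j}, and a union bound to combine. The only detail where your envelope differs from the paper's is that you let the prefactor $(1 - (t-T_i)/L^{2+\delta/2})_+$ multiplying $\psi$ decay all the way to $0$, whereas the paper's $\gamma_{i,j} = G - (i+2)\epsilon_L + \bp + \epsilon_L(1 - j/L)\psi$ stops at $j = N \sim L(1 - 1/\psi_{\max})$, keeping the factor bounded below by roughly $1/\psi_{\max}$; this is not cosmetic, since the proof of Proposition \ref{th:equilibrio} needs $\kappa = 1 - (j-1)/L$ bounded away from zero (see \eqref{eq:kappa} and \eqref{eq:diffe}), and the extra $-2\epsilon_L$ in the constant is precisely what absorbs the leftover $\epsilon_L(1-N/L)\psi \le \epsilon_L$ so that $\gamma_{i,N} \le G - (i+1)\epsilon_L + \bp$.
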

   When we write $h_t\le g$ like in
   \eqref{eq:9},  what we mean exactly is that for every $u\in U_L\cap \mathcal T_L$ one has
   $h_t(u)\le g(u)$.

   \begin{proof}[Proof of \eqref{eq:massima}  given Claim 
\ref{claim:i}]
   For $i=\lfloor G/\epsilon_L\rfloor-1$ we
   have \[T_i=O(L^{2+\delta/2}/\epsilon_L)=O(L^{2+\delta/2}\log L)\ll c(\delta)L^{2+\delta} \] and we obtain 
   that
   \begin{eqnarray}
     \label{eq:ded}
\bbP(h_t\le \bp+2\epsilon_L\quad \text{for every } t\in
   [c(\delta)L^{2+\delta},L^5])=1+o(1).     
   \end{eqnarray}
   On the other hand, we know from Corollary \ref{cor:tmix} that the
   mixing time of the dynamics is $\tmix=O(L^4(\log L)^2)$. Therefore,
   from \eqref{eq:18} we see that for times larger than $L^5$ the
   system is at equilibrium (modulo a negligible error term
   $O(\exp(-L/(\log L)^2))$, uniform in time and in $\eta$) and we deduce that
   \begin{eqnarray}
     \pi_L(h\le \bp+2\epsilon_L)=1+o(1)
   \end{eqnarray}
so that 
\begin{eqnarray}
\label{eq:ded2}
  \sup_{t>L^5}\bbP(h_t\le \bp+2\epsilon_L)=\pi_L(h\le \bp+2\epsilon_L)+O(\exp(-L/(\log L)^2))=1+o(1).
\end{eqnarray}
Equations \eqref{eq:ded} and \eqref{eq:ded2} imply \eqref{eq:massima}.
It will be clear from the proof of Claim \ref{claim:i} that in \eqref{eq:9} we could have replaced $L^5$ with any other
larger power of $L$.
   \end{proof}

   \begin{proof}[Proof of Claim \ref{claim:i}]

We prove \eqref{eq:9} by induction on $i$. 
The functions  $h_t $ and $\bp$ are uniformly $1$-Lipschitz in space and they 
coincide on the boundary of $U_L$: therefore,  Eq. \eqref{eq:9}
for $i=0$ is trivially true (for every $t\ge 0$) if $G$ is chosen large enough depending
 on the diameter of $U$.

 \begin{definition}
\label{def:psi}
Set for $(x,y)\in\bbR^2$
\begin{eqnarray}
\label{eq:psi}
  \psi(x,y)=\psi(0,0)-e^{x/\xi}-e^{y/\xi}
\end{eqnarray}
with $\xi\ll 1$ but independent of $L$ and the constant $\psi(0,0)$ chosen so that $\inf\{\psi(x,y),(x,y)\in
U\}=2$. 
Let $\psi_{max}=\max\{\psi(x,y):(x,y)\in U\}$ and $N\sim L(1-1/\psi_{max})$ the smallest
integer such that 
\[
\left(1-\frac NL\right)\psi_{max}\le 1.
\]   
 \end{definition}

To prove \eqref{eq:9} for $i+1$ given the same statement for  $i$, we  proceed as follows. 
Since $\psi\ge2 $ in $U$, the inductive hypothesis (i.e. \eqref{eq:9} for $i$) implies
\begin{eqnarray}
  \label{eq:11}
\bbP(  h_t\le {G-(i+2)\epsilon_L}+\bar\phi+\epsilon_L \,\psi, \; \text{
  for every } T_i\le t\le L^5)\ge 1-i/L.
\end{eqnarray}

\smallskip

 Define 
\begin{eqnarray}
  \label{eq:13}
  \gamma_{i,j}={G-(i+2)\epsilon_L}+\bar\phi+ \epsilon_L\,
  (1-j/L)\psi,
\end{eqnarray}
let $T_{i,j}=T_i+
j L^{1+\delta/4}$
and $E_{i,j}$ be the event
\begin{eqnarray}
  \label{eq:41}
  E_{i,j}=\{h_t\le \gamma_{i,j}\;\text{ for every }  T_{i,j} \le
  t\le L^5\}.
\end{eqnarray}
We will prove:
\begin{claim}
  \label{claim:j}
Fix $0\le i <\lfloor G/\epsilon_L\rfloor-1$ and assume that \eqref{eq:9} holds. 
For $j\le N$,
\begin{eqnarray}
  \label{eq:12}
\bbP(  E_{i,j})\ge 1-i/L-j/L^3.
\end{eqnarray}
\end{claim}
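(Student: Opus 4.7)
The plan is to induct on $j$. The base case $j=0$ is immediate: since $\psi\ge 2$, one has $\gamma_{i,0}\ge G-i\epsilon_L+\bp$, and the hypothesis \eqref{eq:9} of the outer induction on $i$ yields $\bbP(E_{i,0})\ge 1-i/L$. For the inductive step it is enough to show $\bbP(E_{i,j}\setminus E_{i,j+1})\le 1/L^3$. Using the monotonicity \eqref{eq:19}, on $E_{i,j}$ I replace the interface at time $T_{i,j}$ by the largest admissible discretization of $\gamma_{i,j}$ and impose an artificial ceiling $\gamma_{i,j}$ (automatically respected on $E_{i,j}$). I then partition the window $[T_{i,j+1},L^5]$ into $M=O(L^{4-\delta/4})$ sub-intervals of length $L^{1+\delta/4}$ and reduce the task to a \emph{one-step lemma}: starting from any configuration $\le\gamma_{i,j}$ at a time $T\ge T_{i,j}$, the interface lies below $\gamma_{i,j+1}$ at time $T+L^{1+\delta/4}$ except on an event of probability $\le L^{-n}$, for any $n$. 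Choosing $n$ large and summing over the $M$ intervals yields the target bound $1/L^3$.

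The one-step lemma is proved via a mesoscopic analysis. I tile $U$ with hexagonal cells $R_u$ of physical diameter $\ell=L^{-1/2+o(1)}$ centered at lattice points $u$. At each $u$, $\nabla\gamma_{i,j}(u)$ lies in $\inte\tri$ uniformly away from $\partial\tri$ (since $\bp$ is non-extremal and $\epsilon_L\psi$ is a small perturbation), so Theorem \ref{th:crazy1} produces unique parameters $(a_u,b_u,c_u)$ and a hexagon center such that the associated macroscopic shape $\bar\phi_{a_ub_uc_u}$ matches $\gamma_{i,j}$ at $u$ in value, gradient and the two Hessian components $(z_{11},z_{12})$. The third Hessian component of $\bar\phi_{a_ub_uc_u}$ is pinned by $\mathcal L\bar\phi_{a_ub_uc_u}\equiv 0$, whereas a direct calculation gives
\begin{equation*}
\mathcal L\gamma_{i,j}(u)=\epsilon_L(1-j/L)\,\hat{\mathcal L}\psi(u)+O(\epsilon_L^2),
\end{equation*}
where $\hat{\mathcal L}$ is the linearisation of $\mathcal L$ at $\bp$. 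Definition \ref{def:psi} of $\psi$ and the uniform positivity of the diagonal coefficients $a_{ii}(\nabla\bp)$ on compact subsets of $\inte\tri$ give $\hat{\mathcal L}\psi\sim -1/\xi^2$, so the elliptic Dirichlet solution $\phi^*$ of $\mathcal L\phi^*=0$ on $R_u$ with data $\gamma_{i,j}|_{\partial R_u}$ satisfies, by the maximum principle applied to $\phi^*-\gamma_{i,j}$,
\[
\phi^*(v)\le \gamma_{i,j}(v)-c\,\epsilon_L\,\ell^2/\xi^2
\]
uniformly for $v\in R_u$, with $c>0$ depending only on $\bp$ and $U$. This gap dominates both the required one-step drop $(\epsilon_L/L)\psi\lesssim \epsilon_L/L$ and the cubic Taylor remainder $O(\ell^3)=o(\ell^2\epsilon_L)$ as soon as $\ell$ is chosen slightly above $L^{-1/2}$.

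Finally I transfer the deterministic gap to the Glauber dynamics in $R_u$. Adding an artificial floor at $\phi^*-L^{-1+\epsilon'}$ (justified by a symmetric argument bounding the interface from below), the graph-distance diameter of $R_u$ is $L^{1/2+o(1)}$ and the floor--ceiling gap is $L^{-1+o(1)}$, so Lemma \ref{lemma:Wilson} yields a mixing time $L^{1+o(1)}\ll L^{1+\delta/4}$. After time $L^{1+\delta/4}$ the cell dynamics is within total variation $L^{-n}$ of the uniform equilibrium, and Proposition \ref{th:Petrov2}, applied with reference hexagon $\varhexagon_{a_ub_uc_u}$, concentrates this equilibrium at scale $L^{-1+\epsilon'}$ around the hexagonal mean $\pi^{a_ub_uc_u}_L(h)=\bar\phi_{a_ub_uc_u}+O(1/L)$, hence around $\phi^*$. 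A union bound over the $O(\ell^{-2})$ cells and the $M$ time slots completes the argument. The main obstacle is precisely this last step: the boundary data $\gamma_{i,j}|_{\partial R_u}$ differ from $\pi^{a_ub_uc_u}_L(h)$ by $O(\epsilon_L\ell^2)$, larger than the $O(1/L)$ discrepancy required by Proposition \ref{th:Petrov2}; the remedy is to deform the hexagon parameters locally along $\partial R_u$ using the bijectivity of $f$ in Theorem \ref{th:crazy1}, or to combine monotone coupling with a family of slightly shifted reference measures absorbing the boundary mismatch into a vertical translation. These deformations are what make the choice of cell size and the logarithmic tuning of $\ell$ delicate.
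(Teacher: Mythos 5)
Your overall architecture tracks the paper's: monotone coupling to a frozen-boundary local dynamics, mesoscopic cells of size $L^{-1/2+o(1)}$, hexagonal comparison via Theorem~\ref{th:crazy1}, Petrov-type concentration, and Lemma~\ref{lemma:Wilson} for the mesoscopic mixing time. Two points, however, are genuine gaps.

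First, the decisive point. You match the hexagon $\varhexagon_{a_ub_uc_u}$ to $\gamma_{i,j}$ at $u$ in value, gradient and the two Hessian entries $(H^\gamma_{11},H^\gamma_{12})$, and observe that the mismatch is carried entirely by $H_{22}$. But then the difference $H^\gamma-H^{\bp_{a_ub_uc_u}}$ is a matrix with only a $(22)$ entry (equal to $\mathcal L\gamma/a_{22}<0$), which is only negative \emph{semi}-definite: the quadratic form $(v-u)\cdot(H^\gamma-H^{\bp_{a_ub_uc_u}})\cdot(v-u)$ vanishes on the whole $x$-axis direction. Consequently, along a nontrivial arc of $\partial D_u$ the boundary datum $\gamma|_{\partial D_u}$ is \emph{not} strictly below the hexagonal mean $\pi^{a_ub_uc_u}_L(h)$ by $\Theta(\epsilon_L\ell^2)$ — the gap degenerates to zero. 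Monotonicity then does not deliver the strict downward drift at the center, and Proposition~\ref{th:Petrov2} cannot close the argument. This is not the ``$O(\epsilon_L\ell^2)$ vs.\ $O(1/L)$ boundary-data discrepancy'' you flag at the end; the real issue is the \emph{sign} and \emph{uniformity} of the gap. The paper's fix is to match the hexagon not to $H^\gamma$ but to a modified matrix $\bar H$ (eq.~\eqref{eq:zeta}), engineered so that $(\nabla\gamma,\bar H)$ is a valid local structure and, crucially, so that $H^\gamma-\bar H=\kappa\epsilon_L(1+O(\xi))H^\psi$ is uniformly negative definite (eq.~\eqref{eq:diffe}), because $H^\psi$ is diagonal with strictly negative entries. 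That yields $\gamma^{(L)}_{i,j-1}(v)\le\pi_L^{ABC}(h(v))-C_1\epsilon_L\|u-v\|^2+O(1/L)$ on all of $\partial D_u$ (eq.~\eqref{eq:tosumma}), and the large-$n$ bound then follows by a vertical shift plus monotonicity inside Proposition~\ref{th:Petrov2}. Your auxiliary elliptic solution $\phi^*$ is in the right spirit but as stated cannot satisfy $\phi^*\le\gamma-c\epsilon_L\ell^2/\xi^2$ \emph{uniformly} on $R_u$ (they agree on $\partial R_u$), and more importantly, $\phi^*$ is never tied back to a hexagonal reference whose equilibrium you can actually estimate.

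Second, a more technical but still real gap in the time bookkeeping. Your ``one-step lemma'' controls the interface only at the endpoints $T+kL^{1+\delta/4}$ of the subdivision, while the event $E_{i,j+1}$ requires $h_t\le\gamma_{i,j+1}$ for \emph{all} $t$ in $[T_{i,j+1},L^5]$. Summing over the $M$ subintervals does not protect against excursions above $\gamma_{i,j+1}$ at intermediate times. The paper avoids this by running the constrained mesoscopic dynamics to its equilibrium $\hat\pi_{i,j-1}$ by time $T_{i,j}$ and then applying the continuous-time bound $\bbP^{\pi}(\exists t\in[a,b]:X_t\in A)\le|b-a|M\pi(X\in A)$ to handle the entire window $[T_{i,j},L^5]$ at once (eq.~\eqref{eq:40}). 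You would need a similar uniform-in-$t$ estimate in each subinterval.
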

Taking $j=N$, we obtain the claim \eqref{eq:9} for $i+1$, since
$T_{i,N}\le T_{i+1}$, \[\gamma_{i,N}\le G-(i+1)\epsilon_L+\bp\]  and \[1-i/L-N/L^3\ge 1-(i+1)/L.\] This concludes the proof of Claim
\ref{claim:i}, assuming Claim \ref{claim:j}.

   \end{proof}

   \begin{remark}\label{rq:choix_psi}
     The choice of $\psi$, which might look at first sight rather
     arbitrary, is dictated by the following reasoning. At time $T_i$
     we have $h_t\le \bp+\epsilon_L\psi+c$, with $c$ the constant
     $G-(i+2)\epsilon_L$.  From the discussion in the Introduction, we
     expect the macroscopic evolution of the interface under diffusive
     time scaling to be given by \eqref{eq:5}.  Linearizing the
     differential operator $\mathcal L$ around $\bp+c$ and observing
     that $\mathcal L(\bp+c)=0$, we  find that
\[
\mathcal L (\bp +c+\epsilon_L \psi)=\epsilon_L \hat{\mathcal L}\psi+O(\epsilon_L^2)
\]
with $\hat {\mathcal L}$ the linear elliptic operator
\begin{multline}
  \label{eq:linearizzato}
\hat{\mathcal L}\psi=\sum_{i,j=1}^2 a_{ij}(\nabla\bp)\partial^2_{ij}\psi\\+
\sum_{i,j=1}^2 \left[(\partial_x \psi)\,\partial_sa_{ij}(s,t)|_{(s,t)=\nabla\bp}+(\partial_y \psi)\,\partial_ta_{ij}(s,t)|_{(s,t)=\nabla\bp}
\right].
  \end{multline}
Now observe that the Hessian matrix of our $\psi$ 
is diagonal, with negative
diagonal entries:
\begin{eqnarray}
  \label{eq:8}
  H^\psi=-\frac1{\xi^2}\left(
    \begin{array}{cc}
   e^{x/\xi} & 0\\
0 &  e^{y/\xi}
    \end{array}
\right).
\end{eqnarray}
Therefore, the first sum in \eqref{eq:linearizzato} is strictly and pointwise  negative, uniformly in $U$ (the diagonal elements $a_{ii}$ are positive) and the 
second sum can be neglected, if $\xi$ is small (because $|\partial_x \psi|\ll
|\partial^2_x \psi|$ and similarly for the $y$ derivatives).
In conclusion, with our choice of $\psi$, $\mathcal L(\bp+c+\epsilon_L \psi)$ is everywhere negative, so the interface feels a negative drift that pushes it towards
the equilibrium shape. The drift is of order $-L^{-2}$ 
 (recall that, in \eqref{eq:5}, $\tau$ is  the rescaled time $\tau=t/L^2$).
This heuristic reasoning is what is behind  Claim
\ref{claim:j}. Indeed, going from $j$ to $j+1$ corresponds to lowering the interface by $\approx 1/L$, in a time $T_{i,j+1}-T_i\approx L$, i.e. corresponds to a negative drift of order $-1/L^2$.  \end{remark}

\begin{proof}
  [Proof of Claim \ref{claim:j}] We proceed by induction on $j$ and we
  observe that for $j=0$ the claim is trivial (it just reduces to
  \eqref{eq:11}, that is a consequence of \eqref{eq:9} which we
  assumed to hold for the value $i$).  We want to prove \eqref{eq:12},
  given the same claim for $j-1$.

Let $V$ be  a shrinking of $U$ by $\epsilon_L^2$, i.e. let
\begin{eqnarray}
  \label{eq:31}
V=U\setminus \cup_{x\in\partial U} B(x,\epsilon_L^2),
  \end{eqnarray}
with $B(x,r)$ the ball of radius $r$ centered at $x$.

\begin{remark}
\label{rem:UV}
 We claim first
of all that it is sufficient to prove \eqref{eq:9} at lattice sites $v\in V$.
Indeed, recall that the height on $\partial U_L$ is always fixed (for
all times) to the boundary height $\varphi_L$.  Since both the height
function $h_t$ and $\bp$ are uniformly $1$-Lipschitz in space and
$|h_t-\bp|\simeq|\varphi_L-\varphi|=O(1/L)$ at the boundary $\partial U_L$ one deduces that,
\emph{deterministically}, $|h_t-\bp|=O(\epsilon_L^2)$
in $U_L\setminus V$. 
On the other hand, 
for $i<\lfloor G/\epsilon_L\rfloor-1$ and $j\le N$
\begin{multline}
\gamma_{i,j}-h_t=\bp-h_t+G-(i+2)\epsilon_L+(1-j/L)\epsilon_L\psi  \\
\ge \bp-h_t +\epsilon_L (1-N/L)\psi_{max}\frac{\psi}{\psi_{max}}\ge  \bp-h_t +
\epsilon_L/\psi_{max}
\end{multline}
since $\psi\ge 2$ and $(1-N/L)\psi_{max}=1+o(1)$.  We have seen that
for the sites within distance $\epsilon_L^2$ from $\partial U$ one has
$|\bp-h_t|=O(\epsilon_L^2)\ll \epsilon_L/\psi_{max}$
deterministically, so the inequality $h_t\le \gamma_{i,j}$ holds
automatically.

In conclusion,  we do not have to worry about 
lattice sites too close to the boundary $\partial U$, see also Remark \ref{rem:ufr}.
\end{remark}

One has
\begin{eqnarray}
  \label{eq:42}
  \bbP(  E_{i,j})\ge \bbP(E_{i,j-1})-\bbP(E_{i,j-1};E_{i,j}^c)\ge
  1-\frac iL-\frac{j-1}{L^3} -\bbP(E_{i,j-1};E_{i,j}^c).
\end{eqnarray}
Next, via a union bound,
\begin{eqnarray}
  \label{eq:43}
  \bbP(E_{i,j-1};E_{i,j}^c)\le \sum_{u\in V}
 \bbP(E_{i,j-1};h_t(u)>\gamma_{i,j}(u)\text{ for some } t\in [T_{i,j},L^5]).
\end{eqnarray}
Since there are $O(L^2)$ sites $u\in V$, it is sufficient to
prove
\begin{eqnarray}
  \label{eq:44}
  \bbP(E_{i,j-1};h_t(u)>\gamma_{i,j}(u)\text{ for some } t\in
  [T_{i,j},L^5])\le 1/L^6
\end{eqnarray}
to deduce \eqref{eq:12}.

\smallskip

 Let the stepped monotone interface $\gamma^{(L)}_{i,j-1}$ be a
discretization of $\gamma_{i,j-1}$ such that 
\begin{eqnarray}
  \label{eq:46}
  \gamma_{i,j-1}^{(L)}> \gamma_{i,j-1}
\end{eqnarray}
(strict inequality) and $\{\hat h_t\}_{t\ge T_{i,j-1}}$ be the
Markov dynamics with (random) initial condition $h_{T_{i,j-1}}$ at time $T_{i,j-1}$, and such
that:
\begin{itemize}
\item if $h_{T_{i,j-1}}\le \gamma^{(L)}_{i,j-1}$, then $\hat h_t$ is the
  dynamics with ceiling $\gamma^{(L)}_{i,j-1}$;

\item if instead $h_{T_{i,j-1}}\not\le \gamma^{(L)}_{i,j-1}$, then
  $\hat h_t=h_{T_{i,j-1}}$ for every $t\ge T_{i,j-1}$.
\end{itemize}
Note that, on the event $E_{i,j-1}$, one has $h_{T_{i,j-1}}\le
\gamma^{(L)}_{i,j-1}$ and moreover the two dynamics $h_t$  and $\hat
h_t$ can be coupled so that they exactly coincide in the time interval
$[T_{i,j-1},L^5]$. In fact, from the definition of dynamics with ceiling (Section \ref{sec:monoton}) the two dynamics coincide until the first
time $\tau$ when $h_\tau(v)=\gamma^{(L)}_{i,j-1}(v)$ for some $v$;
on the event
$E_{i,j-1}$ one has
$h_t\le \gamma_{i,j-1}<\gamma^{(L)}_{i,j-1}$ up to time $L^5$ time and therefore $\tau\ge L^5$. 
Therefore, the probability in \eqref{eq:44} can be upper bounded by 
\begin{eqnarray}
  \label{eq:45}
\bbP(\hat h_t(u)>\gamma_{i,j}(u) \text{ for some } t\in
  [T_{i,j},L^5] | \hat h_{T_{i,j-1}}=h_{T_{i,j-1}}\le \gamma^{(L)}_{i,j-1}
).
\end{eqnarray}

Next, we want to reduce from the dynamics $\hat h_t$ in the whole $U_L$ to a dynamics where only the height function in a much smaller domain $D_u$ evolves.
Given a lattice site $u\in V$ let 
$D_u$ be a disk\footnote{
To be precise, $D_u$ should be a discrete domain; take $D_u$  as the union of triangles in $\mathcal
T_L$ contained in such a disk. For lightness of exposition,
we will overlook this minor detail and just call $D_u$ a ``disk''.} of radius $ L^{-1/2+\delta/100}$ centered
at $u$: from the definition \eqref{eq:31} of $V$, we see that the disk $D_u$ is entirely contained
  in $U$, since $L^{-1/2+\delta/100}\ll\epsilon_L^2$.

  We start by observing that, by monotonicity, since we want to upper
  bound \eqref{eq:45}, we are allowed to change the random
  configuration $\hat h_{T_{i,j-1}}$ at time $T_{i,j-1}$ to the
  deterministic configuration $\gamma_{i,j-1}^{(L)}\ge\hat h_{T_{i,j-1}} $, and to freeze
  $\hat h_t(v)$ to $\gamma_{i,j-1}^{(L)}(v)$ for times $t\ge
  T_{i,j-1}$ and sites $v$ outside $D_u$. In words, we are pinning the
  height function to the ceiling outside $D_u$. Again by monotonicity,
  we impose that the evolution $\hat h_t$ has a ``floor'' constraint
  $\hat h_t(v)\ge \gamma^{(L)}_{i,j-1}-L^{-1+\delta/40}$. We still
  call $\{\hat h_t\}_{ t\ge T_{i,j-1}}$ the dynamics after these two
  modifications, and we denote $\hat \pi_{i,j-1}$ its equilibrium
  measure (it is the uniform measure on stepped monotone interfaces in
  $D_u$, with boundary height $\gamma_{i,j-1}^{(L)}|_{\partial D_u}$ ,
  ceiling $\gamma_{i,j-1}^{(L)}$ and floor
  $\gamma^{(L)}_{i,j-1}-L^{-1+\delta/40} $).
\begin{remark}
  \label{rem:ufr} We have used crucially that $D_u\subset U$, more precisely that boundary sites on $\partial D_u$ are in $U_L$ to say that, on the event
$E_{i,j-1}$,  $h_t(v)< \gamma^{(L)}_{i,j-1}(v)$ for $v\in \partial D_u$ and $t\in [T_{i,j-1},L^5]$. If we had to consider points much closer to the boundary,
we would have to take a disk $D_u$ of smaller diameter (so that it fits in
$U_L$) and then the proof of Proposition \ref{th:equilibrio} below would fail.
\end{remark}

Let us assume for the moment the  following equilibrium estimate:
  \begin{proposition}
\label{th:equilibrio}
    Let $j\le N$ and  $\hat\pi_{i,j-1}$ be   as above.
If $\xi$ in \eqref{eq:psi} is smaller than some $\xi_0>0$ (that is independent of $i,j,L$) then
\begin{eqnarray}
  \label{eq:15}
 \hat\pi_{i,j-1}\left[ h(u)> \gamma_{i,j-1}(u)-L^{-1+\delta/60}\right]\le L^{-20}.
\end{eqnarray}
  \end{proposition}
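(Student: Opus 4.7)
The strategy is to reduce the estimate to a hexagonal equilibrium via Theorem \ref{th:crazy1}, and then invoke the concentration result of Proposition \ref{th:Petrov2}. Since $\gamma_{i,j-1}=\bp+O(\epsilon_L)$ and $\bp$ is non-extremal, the local structure $(\nabla\gamma_{i,j-1}(u),\partial_x^2\gamma_{i,j-1}(u),\partial_{xy}^2\gamma_{i,j-1}(u))$ lies in a fixed compact subset of $\mathcal A$ uniformly in $i,j,L$. Theorem \ref{th:crazy1} then produces $a,b,c>0$ with $a+b+c=1$ and $\tilde u\in\Circle_{abc}$ in a fixed compact subset of $W$, such that $\bp_{abc}$ has gradient and $\partial_x^2,\partial_{xy}^2$ components at $\tilde u$ matching those of $\gamma_{i,j-1}$ at $u$. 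The crucial point is that $\mathcal L\bp_{abc}(\tilde u)=0$ while, by Remark \ref{rq:choix_psi}, $\mathcal L\gamma_{i,j-1}(u)\le -c_0\epsilon_L<0$ for $\xi\le\xi_0$ small enough; since the coefficients $a_{ij}$ depend only on the gradient and therefore agree, the remaining Hessian entries are forced to differ by
\[
\partial_y^2\bp_{abc}(\tilde u)-\partial_y^2\gamma_{i,j-1}(u)=-\mathcal L\gamma_{i,j-1}(u)/a_{22}\ge c_1\epsilon_L.
\]
A second-order Taylor expansion, combined with the shift $C=\gamma_{i,j-1}(u)-\bp_{abc}(\tilde u)$, then yields, for every $w\in D_u$,
\[
\bp_{abc}(\tilde u+(w-u))+C=\gamma_{i,j-1}(w)+A(w_y-u_y)^2+O(L^{-3/2+\delta/30}),
\]
with $A\ge c_1\epsilon_L/2$ and a cubic error negligible on the scale $|w-u|\le L^{-1/2+\delta/100}$. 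In particular, on $\partial D_u$ the ceiling $\gamma_{i,j-1}^{(L)}$ lies \emph{below} the shifted hexagonal macroscopic shape by a non-negative quantity that exceeds $c_1\epsilon_L(w_y-u_y)^2/2-O(1/L)$.

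Next, by FKG, conditioning on the decreasing event $\{h\le \gamma_{i,j-1}^{(L)}\}$ can only decrease the probability of the increasing event $\{h(u)>\gamma_{i,j-1}(u)-L^{-1+\delta/60}\}$; hence it suffices to bound this probability under the unconstrained uniform measure $\mu:=\pi_{D_u}^{\gamma_{i,j-1}^{(L)}|_{\partial D_u}}$. The mean height under $\mu$ approximately solves \eqref{eq:PDE} linearized around $\nabla\gamma_{i,j-1}(u)$, and the Poisson-kernel computation for this elliptic operator on the disk $D_u$ of radius $R=L^{-1/2+\delta/100}$ shows that a boundary deficit of the form $A(w_y-u_y)^2$ depresses the mean at the centre $u$ by at least
\[
c_2\,A\,R^2\;=\;c_2\,\epsilon_L\,L^{-1+\delta/50}/\log L,
\]
a quantity which comfortably dominates $L^{-1+\delta/60}$.

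To make this rigorous, I would compare $\mu$ with the hexagonal measure $\pi_L^{abc}$ restricted, via DLR, to the translated disk $\tilde u+(D_u-u)$, whose boundary heights are concentrated around $\pi_L^{abc}(h(\cdot))$ within $L^{-1+\epsilon'}$ by the estimate \eqref{eq:7} of Theorem \ref{th:Petrov}. Using monotonicity to absorb the fluctuations, one sandwiches $\mu$ between two hexagonal-type equilibria with boundary heights shifted by $\pm L^{-1+\epsilon'}$ (taking $\epsilon'\ll\delta/60$), and Proposition \ref{th:Petrov2} applied to these auxiliary problems transfers the quadratic boundary deficit into a lowering of the mean at $u$ of the order $\epsilon_L L^{-1+\delta/50}/\log L$. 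Combining this mean depression with the concentration $|h(u)-\mathbb E_\mu h(u)|\le L^{-1+\epsilon'}$ yields the stated bound $L^{-20}$ (indeed $L^{-n}$ for any $n$). The main obstacle I anticipate is precisely the control of this discrete maximum principle: one must ensure, uniformly in the hexagon parameters provided by Theorem \ref{th:crazy1}, that the quadratic forcing from the $\partial_y^2$ mismatch strictly dominates both the $O(1/L)$ discretization error in $\gamma_{i,j-1}^{(L)}$ and the $O(L^{-1+\epsilon'})$ fluctuations of the hexagonal boundary, which together dictate the choice of the mesoscopic scale $L^{-1/2+\delta/100}$ of $D_u$.
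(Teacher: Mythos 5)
Your overall framework—reduce to the hexagonal equilibrium via Theorem~\ref{th:crazy1}, then use the concentration of Theorem~\ref{th:Petrov} and Proposition~\ref{th:Petrov2}—is indeed the paper's strategy, and your observation that $\mathcal L\gamma_{i,j-1}\le -c_0\epsilon_L$ (from Remark~\ref{rq:choix_psi}) is the correct engine. However, there are two genuine gaps.

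The central one concerns the choice of hexagonal local structure. You feed $(\nabla\gamma,\ \partial_x^2\gamma,\ \partial^2_{xy}\gamma)$ into $f^{-1}$, so that the mismatch with $\bp_{ABC}$ is concentrated entirely in the $\partial_y^2$ entry. The resulting boundary deficit on $\partial D_u$ is then $\tfrac12\,A\,(w_y-u_y)^2$, which is only \emph{positive semi-definite}: it vanishes where $\partial D_u$ meets the horizontal line $w_y=u_y$, and the $O(1/L)$ discretization error makes it actually negative near those points. This is fatal for the argument you sketch: Proposition~\ref{th:Petrov2} only applies when the boundary height matches $\pi_L^{abc}(h(\cdot))$ up to $O(1/L)$, so after using monotonicity to drop the non-negative part of the deficit you can at best conclude $h(u)\le\gamma(u)+L^{-1+\epsilon}$, which is the wrong sign of what is needed. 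Converting a \emph{non-constant, degenerate} boundary deficit into a depression of the mean at the centre would require a quantitative discrete maximum principle (your ``Poisson-kernel computation'') that is simply not available from Theorem~\ref{th:Petrov} or Proposition~\ref{th:Petrov2}; these only control the model when the boundary is $O(1/L)$ from the hexagonal mean. The paper avoids this entirely by \emph{not} keeping $\partial_x^2\gamma,\partial_{xy}^2\gamma$: it defines $\bar H=H^{\bp}-\frac{H^{\bp}\cdot{\bf a}(\nabla\gamma)}{H^{\psi}\cdot{\bf a}(\nabla\gamma)}H^\psi$ (Eq.~\eqref{eq:zeta}), which still satisfies the constraint ${\bf a}(\nabla\gamma)\cdot\bar H=0$, and feeds $(\nabla\gamma,\bar H_{11},\bar H_{12})$ into $f^{-1}$. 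The crucial payoff is Eq.~\eqref{eq:diffe}: $H^\gamma-\bar H=\kappa\epsilon_L(1+O(\xi))H^\psi$, so the mismatch is \emph{proportional to $H^\psi$}, which by \eqref{eq:8} is a negative-definite diagonal matrix with comparable eigenvalues. Hence the Taylor-expansion deficit is $\le -C_1\epsilon_L\|u-v\|^2$, which on the circle $\partial D_u$ (radius $R$) is the \emph{constant} $-C_1\epsilon_L R^2$. A constant shift of the hexagonal boundary data is exactly what Proposition~\ref{th:Petrov2} plus monotonicity can absorb, yielding $h(u)\le\gamma(u)-C_2\epsilon_L L^{-1+\delta/50}+L^{-1+\epsilon}$ with $O(L^{-n})$ error. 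This is the idea you are missing; $\psi$ is designed precisely so that $H^\psi$ is negative definite and this construction is possible.

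The second gap: you remove the ceiling by FKG (fine—it is a decreasing conditioning), but $\hat\pi_{i,j-1}$ also has a \emph{floor} $\gamma^{(L)}_{i,j-1}-L^{-1+\delta/40}$, and conditioning on the increasing event $\{h\ge\text{floor}\}$ pushes the probability in the \emph{wrong} direction for you; FKG/monotonicity alone cannot discard it. The paper handles this with Lemma~\ref{lemma:floor}, which shows via a separate comparison that the floor constraint is satisfied automatically with $\tilde\pi$-probability $1-O(L^{-n})$, so that $\|\hat\pi-\tilde\pi\|=O(L^{-n})$. You would need an analogous estimate, which in turn relies on a two-sided version of the Taylor comparison \eqref{eq:tosumma}--\eqref{eq:tosumma2}.
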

  Let us conclude the proof of the step $j-1\to j$, given Proposition
\ref{th:equilibrio}.
By Lemma \ref{lemma:Wilson}, since the graph-distance diameter of $D_u$ is $O(L^{1/2+\delta/100})$ and the distance between floor $\gamma^{(L)}_{i,j-1}-L^{-1+\delta/40}$
and ceiling $\gamma^{(L)}_{i,j-1}$ is $L^{-1+\delta/40}$, the  mixing time of the  dynamics $\hat h_t$
  is \[O(L^{2(1/2+\delta/100)}L^{2\delta/40}(\log L)^2)\le L^{1+\delta/8}\ll
  T_{i,j}-T_{i,j-1}=L^{1+\delta/4}.\] Therefore, at time $T_{i,j}$ equilibrium $\hat\pi_{i,j-1}$ has been reached, up to a negligible
  variation distance error $O(\exp(-L^{\delta/8}))$.
  As a consequence,
  \begin{multline}
    \label{eq:40}
    \bbP(\hat h_t(u)> \gamma_{i,j}(u)\text{ for some } t\in [T_{i,j}, L^5])\\=
\bbP^{\hat \pi_{i,j-1}}(\hat h_t(u)> \gamma_{i,j}(u)\text{ for some } t\in [T_{i,j},
L^5])+O(\exp(-L^{\delta/8}))\\
\le L^7\hat\pi_{i,j-1}(h(u)> \gamma_{i,j}(u)) +O(\exp(-L^{\delta/8})).
  \end{multline}
In the second line,
$\bbP^{\hat \pi_{i,j-1}}$ denotes the law of the modified dynamics, with
initial condition sampled from the equilibrium distribution $\hat
\pi_{i,j-1}$; in the third line  we used the standard fact that, for a
continuous-time homogeneous Markov
chain $X_t$ with invariant measure $\pi$ and any event $A$,
\[\bbP^\pi(\exists t\in [a,b]: X_t\in A)\le |b-a|M\pi(X\in A)\]
with $M$ the average number of updates per unit time. In our case
$M$ can be bounded by the number of lattice sites in $D_u$ (since each site has a mean-one Poisson clock), which is much smaller than  $L^2$.

Note that 
\begin{eqnarray}
  \label{eq:16}
\gamma_{i,j-1}(u)-L^{-1+\delta/60}<\gamma_{i,j}(u):
\end{eqnarray}
just recall \eqref{eq:13} and observe that
$L^{-1+\delta/60}>\epsilon_L\psi(u)/L$. As a consequence, from Proposition
\ref{th:equilibrio}, 
\begin{eqnarray}
  \label{eq:47}
  \hat\pi_{i,j-1}(h(u)>\gamma_{i,j}(u))\le  \hat\pi_{i,j-1}(h(u)>\gamma_{i,j-1}(u)-L^{-1+\delta/60})\le L^{-20}
\end{eqnarray}
and \eqref{eq:44} follows from \eqref{eq:45} and \eqref{eq:40}. The proof of Claim \ref{claim:j} is concluded.
   \end{proof}

\begin{remark} 

The common points with the proof of \cite[Theorem 2]{CMT}
(that is the analog of Theorems \ref{th:pratica2} and \ref{th:praticamente} in the particular case of flat macroscopic shape) are the pervasive use of monotonicity and the 
idea of employing Lemma \ref{lemma:Wilson} on mesoscopic domains of size 
slightly larger than $L^{-1/2}$ (specifically, $L^{-1/2+\delta/100}$ here). 
\end{remark}

\section{Proof of Proposition \ref{th:equilibrio}}

\label{sec:proofeq}
Since $i$ and $j$ are fixed in this section, we
write for simplicity of notation \[\gamma=\gamma_{i,j-1}=G-(i+2)\epsilon_L+\bar\phi+\kappa\epsilon_L\,
\psi\]
with $\kappa=(1-(j-1)/L)$.
Recall that $(j-1)<N$, with $N$ as in Definition \ref{def:psi}, so that
\begin{eqnarray}
  \label{eq:kappa}
  \kappa\in [(1+O(1/L))/\psi_{max}, 1].
\end{eqnarray}
In fact, we will need that $\kappa$ is bounded away from zero uniformly in $L,i,j$.

Recall that $u=(x,y)\in V$, $D_u\subset U$ is a disk centered at $u$, of
radius $ L^{-1/2+\delta/100}$ and that $\hat \pi:=\hat \pi_{i,j-1}$ is the uniform measure over
stepped interfaces in $D_u$, with boundary condition
$\gamma^{(L)}_{i,j-1}|_{\partial D_u}$, ceiling $\gamma^{(L)}_{i,j-1}$ and floor $\gamma^{(L)}_{i,j-1}-L^{-1+\delta/40}$.
Since we want to prove the \emph{upper bound} \eqref{eq:15}, we can
by monotonicity remove the ceiling.

The floor cannot be removed by monotonicity. However:
\begin{lemma}
\label{lemma:floor}
  Let $\tilde \pi$ be obtained from $\hat \pi$ by eliminating the
  floor constraint $h\ge \gamma^{(L)}_{i,j-1}-L^{-1+\delta/40}$. We have $\|\hat \pi-\tilde\pi\|=O(L^{-n})$ for any given $n$.
\end{lemma}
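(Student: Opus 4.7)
Because $\hat\pi$ is the law of $\tilde\pi$ conditioned on the floor event $A=\{h\ge \gamma_{i,j-1}^{(L)}-L^{-1+\delta/40}\}$, the elementary inequality $\|\hat\pi-\tilde\pi\|\le 2\tilde\pi(A^c)/(1-\tilde\pi(A^c))$ reduces Lemma~\ref{lemma:floor} to showing $\tilde\pi(A^c)=O(L^{-n-1})$ for every $n$. Since $D_u$ contains $O(L^{1+\delta/50})$ lattice sites, a union bound further reduces the task to proving, uniformly in $v\in D_u$,
\[
\tilde\pi\bigl(h(v)<\gamma_{i,j-1}^{(L)}(v)-L^{-1+\delta/40}\bigr)=O(L^{-m})
\]
for every fixed $m$.

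Write $\gamma=\gamma_{i,j-1}$ for brevity. The key idea is to compare $\tilde\pi$ with the equilibrium $\pi_L^{abc}$ of a well-chosen hexagonal region. Because $\bar\phi$ is non-extremal on $U$ and $\psi\in C^\infty$, the local structure $(\nabla\gamma(u),H^\gamma_{11}(u),H^\gamma_{12}(u))$ belongs to a compact subset of $\mathcal A$, uniformly in $u\in V$. The inverse of the diffeomorphism $f$ produced by Theorem~\ref{th:crazy1} then yields $(a,b,x_0,y_0)\in W$ in a compact subset of $W$; in particular $(x_0,y_0)$ is bounded away from $\partial\Circle_{abc}$. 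Translate the hexagon so that $(x_0,y_0)\leftrightarrow u$ (so $D_u\subset \Circle_{abc}$ for $L$ large) and shift vertically so that $\pi_L^{abc}(h(u))=\gamma(u)$; the required shift is $O(1/L)$, since in the bulk of $\Circle_{abc}$ one has $\pi_L^{abc}(h(u))=\bar\phi_{abc}(u)+O(1/L)$. Then the gradients of $\gamma$ and $\bar\phi_{abc}$, together with their $(1,1)$ and $(1,2)$ Hessian entries, agree at $u$, while the $(2,2)$-entries differ by $O(\epsilon_L)$; this mismatch comes from the fact that $\bar\phi_{abc}$ satisfies the quasi-linear equation \eqref{eq:PDE} exactly, whereas $\mathcal L\gamma=\kappa\epsilon_L\hat{\mathcal L}\psi+O(\epsilon_L^2)=O(\epsilon_L)$.

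Setting $r_u=L^{-1/2+\delta/100}$, a second-order Taylor expansion around $u$ then gives
\[
\sup_{v\in\partial D_u}|\gamma(v)-\bar\phi_{abc}(v)|=O(\epsilon_L r_u^2)+O(r_u^3)=O\!\left(\frac{L^{-1+\delta/50}}{\log L}\right),
\]
and combining this with $|\gamma^{(L)}-\gamma|=O(1/L)$ and the estimate $|\pi_L^{abc}(h(v))-\bar\phi_{abc}(v)-(\pi_L^{abc}(h(u))-\bar\phi_{abc}(u))|=O(|v-u|/L)$ provided by Theorem~\ref{th:Petrov} yields
\[
\sup_{v\in\partial D_u}\bigl|\gamma^{(L)}(v)-\pi_L^{abc}(h(v))\bigr|=O(L^{-1+\delta/50}/\log L)=o(L^{-1+\delta/40}).
\]
This is precisely the boundary compatibility needed to bring Proposition~\ref{th:Petrov2} into play.

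The final step is a minor quantitative variant of Proposition~\ref{th:Petrov2}: its proof uses the hypothesis $|\varphi_L-\pi_L^{abc}(h)|\le C/L$ only to absorb this discrepancy into the fluctuation scale $L^{-1+\epsilon}$, so the conclusion still holds whenever the boundary discrepancy is $o(L^{-1+\epsilon})$. Choosing $\epsilon=\delta/40$ one obtains
\[
\tilde\pi\bigl(|h(v)-\pi_L^{abc}(h(v))|>L^{-1+\delta/40}/4\bigr)=O(L^{-m}),
\]
and combined with the boundary alignment above this gives $h(v)\ge \gamma^{(L)}(v)-L^{-1+\delta/40}$ with probability $1-O(L^{-m})$, as required. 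The principal obstacle is the uniform-in-$u\in V$ boundary comparison in the third paragraph, which simultaneously uses compactness of the $f^{-1}$-preimages, the second-order matching afforded by Theorem~\ref{th:crazy1}, and the $O(1/L)$ finite-volume control from Theorem~\ref{th:Petrov}; the quantitative extension of Proposition~\ref{th:Petrov2} is only a routine inspection of its proof.
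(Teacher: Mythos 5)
Your proposal is correct and follows essentially the same route as the paper: reduce to bounding $\tilde\pi(A^c)$, compare the boundary height $\gamma^{(L)}_{i,j-1}|_{\partial D_u}$ with $\pi_L^{ABC}(h(\cdot))$ via the second-order local-structure match and Theorem~\ref{th:Petrov}, then invoke Proposition~\ref{th:Petrov2}. The paper's write-up is shorter only because it reuses the already-established inequalities \eqref{eq:tosumma} and \eqref{eq:tosumma2} from the proof of Proposition~\ref{th:equilibrio} rather than re-deriving the boundary comparison, and it applies Proposition~\ref{th:Petrov2} with the same (tacitly assumed) quantitative extension that you correctly flag as a routine adaptation.
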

This will be proven at the end of this section. It is clear that it is
sufficient to prove Proposition \ref{th:equilibrio} with $\hat\pi$
replaced by $\tilde\pi$.
\medskip

Call $\nabla
\bp(u)=(\partial_x{\bar\phi}(u), \partial_y{\bar\phi}(u))$ the slope
of $\bar\phi$ at $u$ and $ H^{\bar\phi}(u)$ its $2\times 2$ Hessian
matrix. These are well-defined objects, since $\bp$ is non-extremal
and therefore infinitely differentiable, see Section \ref{sec:macro}.
Similarly, call \[ \nabla
\gamma(u)=(\partial_x\gamma(u),\partial_y\gamma(u))=\nabla{\bar\phi}(u)+\kappa\epsilon_L\,\nabla{\psi}(u)\]
and
\[H^\gamma(u)=H^{\bar\phi}(u)+\kappa\epsilon_L\, H^{\psi}(u)\] the slope and Hessian of
$\gamma$. The argument $u$ will be omitted unless needed for clarity. Note that $\nabla\gamma$ and $H^\gamma$ do \emph{not} in general
satisfy \eqref{eq:PDE}, i.e.  in general
\begin{eqnarray}
  \label{eq:nottru}
{\bf a}(\nabla \gamma)\cdot H^\gamma:=\sum_{i,j=1,2}a_{ij}(\nabla\gamma)H^\gamma_{ij}\ne0, 
\end{eqnarray}
because $\gamma$ is \emph{not} the equilibrium
shape with some boundary height. In other words,
$(\nabla\gamma,H^\gamma)$ is \emph{not} the local structure of any
macroscopic shape.

\begin{remark}
  One has
\begin{eqnarray}
  \label{eq:21}
\nabla\psi=-\frac1\xi(\,e^{x/\xi},e^{y/\xi})
\end{eqnarray}
and
\begin{eqnarray}
\label{eq:norma}
\|\nabla\gamma-\nabla\bp\|= \kappa \epsilon_L \|
\nabla\psi\|\le \mathcal K(\xi,\kappa,\epsilon_L,u):=\sqrt2\frac{\kappa\epsilon_L}\xi\,
e^{\max(x,y)/\xi},
\end{eqnarray}
which is $o(1)$ when $L\to\infty$. In particular, since $\nabla \bp$ is uniformly bounded away from $\partial \tri$, so is $\nabla \gamma$.
\end{remark}

Define then the $2\times 2$ matrix 
\begin{eqnarray}
  \label{eq:zeta}
  {\bar H}
=H^{\bar\phi} - \frac{H^{\bar\phi}\cdot{\bf a}(\nabla\gamma) }{H^{\psi}\cdot{\bf a}(\nabla\gamma)}H^\psi
\end{eqnarray}
with $H^\psi$ defined in \eqref{eq:8}.
The denominator is non-zero always: recall from Section
\ref{sec:macro} that the diagonal elements $a_{ii}$ given in
\eqref{eq:aij} are positive away from $\partial \tri$ and 
that $H^\psi$ is diagonal, with negative
diagonal entries, see \eqref{eq:8}.
Remark also that,
in contrast with
\eqref{eq:nottru},
\begin{eqnarray}
  \label{eq:questasi}
 {\bf a}(\nabla \gamma)\cdot \bar H=0.
\end{eqnarray}
In other words, we have added the right correction to $H^\gamma$ so that
$(\nabla\gamma, \bar H)$ is a local structure of a macroscopic shape.
We further remark that
\begin{eqnarray}
  \label{eq:diffe}
  H^\gamma-\bar H=\left(\kappa\epsilon_L+ \frac{H^{\bar\phi}\cdot{\bf a}(\nabla\gamma) }{H^{\psi}\cdot{\bf a}(\nabla\gamma)}\right)H^\psi=
\kappa\epsilon_L(1+O(\xi))H^\psi.
\end{eqnarray}
To see this, observe that for the denominator
\begin{eqnarray}
  \label{eq:1}
  |H^{\psi}\cdot {\bf a}(\nabla\gamma)|\ge \frac c{\xi^2}e^{\max(x,y)/\xi}
\end{eqnarray}
with $c=\min_{(x,y)\in U}\min(a_{11}(\nabla\gamma),a_{22}(\nabla\gamma))>0$; for 
the numerator, 
\begin{eqnarray}
  \label{eq:forthe}
  H^{\bp}\cdot {\bf a}(\nabla\gamma)=
  H^{\bp}\cdot {\bf a}(\nabla\bp)+
  H^{\bp}\cdot ({\bf a}(\nabla\gamma)-{\bf a}(\nabla\bp))\\=0+ H^{\bp}\cdot ({\bf a}(\nabla\gamma)-{\bf a}(\nabla\bp)).
\end{eqnarray}
From \eqref{eq:norma} and \eqref{eq:1}
one deduces
\begin{eqnarray}
  \label{eq:22}
  \frac{ H^{\bp}\cdot {\bf a}(\nabla\gamma)}{H^{\psi}\cdot {\bf a}(\nabla\gamma)}=O\left(
\kappa \epsilon_L\,\xi
\right)
\end{eqnarray}
and \eqref{eq:diffe} follows.

\medskip

Let \[
z(u)=(\partial_x \gamma(u),\partial_y\gamma(u),\bar H_{11}(u),\bar
H_{12}(u))\in\mathcal A
\]
and define
\begin{eqnarray}
  \label{eq:ws}
w(u)=(A(u),B(u),X(u),Y(u))=f^{-1}(z(u))\in W,
\end{eqnarray}
with $f:W\mapsto \mathcal A$ the function of Theorem \ref{th:crazy1}.
That is, recalling also \eqref{eq:questasi}, $(\nabla \gamma(u),\bar H(u))$ is the 
local structure of the macroscopic shape $\bp_{A(u)B(u)C(u)}$ in the hexagon $\varhexagon_{A(u)B(u)C(u)}$, with $C(u)=1-A(u)-B(u)$, at the point $(X(u),Y(u))\in \Circle_{A(u)B(u)C(u)}$. 
\begin{remark}
  Observe that, since $\bp$ is non-extremal, the closure $K$ of the set 
\[
K'=\{(\partial_x \bp(u),\partial_y\bp(u),\partial^2_x\bp(u),
\partial^2_{xy}\bp(u)), \; u\in U\}
\]
is a compact subset of the open set $\mathcal A$. By Proposition  \ref{th:bordobordo}, 
$f^{-1}(K)$ is a compact subset of the open set $W$.
Next, note that
\begin{eqnarray}
\label{eq:zuzu}
\sup_{u\in U}\|z(u)-(\partial_x\bp(u),\partial_y\bp(u), \partial^2_x\bp(u),\partial^2_{xy}\bp(u))\|=o(1)
\end{eqnarray}
when $L\to\infty$.
Indeed, $\|\nabla\gamma-\nabla\bp\|$ was bounded in \eqref{eq:norma}, while
\begin{eqnarray}
\label{eq:normaz}
  \|\bar H-H^{\bar\phi}\|=\frac{|H^{\bar\phi}\cdot {\bf a}(\nabla\gamma)|}{
|
(H^\psi/\|H^\psi\|)\cdot {\bf a}(\nabla\gamma)
|
}=O(\sup_{u\in U}\mathcal K(\xi,\kappa,\epsilon_L,u))=o(1):
\end{eqnarray}
for the numerator see \eqref{eq:forthe} and \eqref{eq:norma}, while
the denominator is lower bounded by 
\[
(1/\sqrt 2)\times\min_{(x,y)\in U}\min(a_{11}(\nabla\gamma),a_{22}(\nabla\gamma))
\]
which is positive, as discussed before.

As a consequence of \eqref{eq:zuzu} and of Proposition
\ref{th:bordobordo}, we see that $\{w(u) , u\in U\}$ is contained in a
compact subset of $W$. In other words,  uniformly in $u\in U$, $A(u), B(u), C(u)$ are bounded
away from zero and $(X(u), Y(u))$ is bounded away from $\partial
\Circle_{A(u)B(u)C(u)}$. This remark is
important to guarantee that, when one applies Theorem \ref{th:Petrov},
estimates one obtains are uniform with respect to $u$. Uniformity will
be not be recalled explicitly later.
\end{remark}

From now on, for lightness of notation we remove the argument $u$ from
$A(u), B(u), C(u)$.
 Call, as in Section \ref{sec:hexa}, $ \pi_L^{ABC}$ the uniform measure in the 
hexagon $\varhexagon_{A_LB_LC_L}$ (the discretization of
$\varhexagon_{ABC}$) and $\bar \phi_{ABC}$ the macroscopic shape. 
Translate $\varhexagon_{A_LB_LC_L}$ in the $P_{111} $ plane so that $u=(x,y)$ and
$(X(u),Y(u))$ coincide, and add a suitable global constant to the
boundary height $\varphi_{a_Lb_Lc_L}$ on $\partial \varhexagon_{A_LB_LC_L}$, so that
\begin{eqnarray}
  \label{eq:20}
\pi_L^{ABC}(h(u))=\gamma(u).
\end{eqnarray}

We are at last in a position to prove the claim of Proposition \ref{th:equilibrio}, i.e. that
\begin{eqnarray}
\label{eq:atlast}
  \tilde \pi(h(u)>\gamma(u)-L^{-1+\delta/60})\le L^{-20}
\end{eqnarray}
(recall that $\tilde \pi$ differs from $\hat \pi$ in that the floor
constraint $h\ge \gamma^{(L)}_{i,j-1}-L^{-1+\delta/40}$ has been removed).

Recall that the boundary condition on  $\partial  D_u$ (that is a disk of radius $L^{-1/2+\delta/100}$ centered at $u$), is
$\gamma^{(L)}_{i,j-1}$, that is a discretization of
$\gamma=\gamma_{i,j-1}$.  We have for $v\in D_u$
\begin{multline}
\label{gga}
  \gamma^{(L)}_{i,j-1}(v)=\gamma(v)+O(1/L)\\=\gamma(u)+(v-u)\cdot\nabla\gamma(u)+
\frac12(v-u)\cdot H^\gamma(u)\cdot(v-u)+O(1/L)
\end{multline}
where we used $\|u-v\|\sim L^{-1/2+\delta/100}$ and smoothness
of $\gamma$ to ignore higher-order terms in the Taylor expansion of $\gamma$ around $u$.
Next, write $H^\gamma(u)=\bar H(u)+(H^\gamma(u)-\bar H(u))$, with $\bar H$ defined in \eqref{eq:zeta}. From \eqref{eq:diffe} and \eqref{eq:8} we have for $\xi$ small enough
\begin{equation}
  (v-u)\cdot (H^\gamma(u)-\bar H(u))\cdot (v-u)\le -C_1\epsilon_L \|u-v\|^2,
\end{equation}
with $C_1(\xi,\kappa)$ a strictly positive constant, independent of $L$
(recall from \eqref{eq:kappa} that $\kappa $ is bounded away from zero).
Therefore, 
\begin{equation}
\label{eq:there}
    \gamma^{(L)}_{i,j-1}(v)\le \gamma(u)+(v-u)\cdot\nabla\gamma(u)+\frac12(v-u)\cdot \bar H(u)\cdot(v-u)-C_1\epsilon_L \|u-v\|^2+O(1/L).
\end{equation}
Now recall that $\nabla \gamma(u)$ and $\bar H(u)$ are also the
gradient and Hessian at $u$ of $\bar \phi_{ABC}$, the macroscopic
shape in the hexagon $\varhexagon_{ABC}$. A second-order Taylor
expansion and \eqref{eq:20} give then
\begin{multline}
   \gamma^{(L)}_{i,j-1}(v)\le (\gamma(u)-\bp_{ABC}(u))+\bp_{ABC}(v)-C_1\epsilon_L\|u-v\|^2+O(1/L)\\
=(\pi_L^{ABC}(h(u))-\bp_{ABC}(u))+\bp_{ABC}(v)-C_1\epsilon_L \|u-v\|^2+O(1/L).
\end{multline}
Thanks to Theorem \ref{th:Petrov} we have $(\pi_L^{ABC}(h(u))-\bp_{ABC}(u))=(\pi_L^{ABC}(h(v))-\bp_{ABC}(v))+O(1/L)$
and finally 
\begin{eqnarray}
\label{eq:tosumma}
 \gamma^{(L)}_{i,j-1}(v)\le   \pi_L^{ABC}(h(v))-C_1\epsilon_L \|u-v\|^2+O(1/L).
\end{eqnarray}

Taking $v\in\partial D_u$, so that $\|u-v\|\sim L^{-1/2+\delta/100}$,
we deduce that the boundary height $\gamma^{(L)}_{i,j-1}|_{\partial
  D_u}$ in the measure $\tilde \pi$ is lower than the
function \[\partial D_u\ni v\mapsto \pi_L^{ABC}(h(v))-C_2 \epsilon_L
L^{-1+\delta/50}.\] An immediate application of Proposition
\ref{th:Petrov2} gives that the $\tilde \pi$-probability
that \[h(u)>\gamma(u)-L^{-1+\delta/60}=
\pi_L^{ABC}(h(u))-L^{-1+\delta/60}\] is $O(L^{-n})$ for any given
$n$. Choosing $n>20$ we get the desired result \eqref{eq:atlast}.
\qed

\begin{proof}[Proof of Lemma \ref{lemma:floor}]
  Observe that, with our usual notations, $\tilde
  \pi=\pi_{D_u}^{\gamma^{(L)}_{i,j-1}}$ and
\[
\hat\pi=\tilde\pi(\cdot\;|\;\cdot\ge \gamma^{(L)}_{i,j-1}-L^{-1+\delta/40}),
\]
from which it is immediate to deduce 
\[
\|\hat\pi-\tilde\pi\|\le
\frac{\tilde\pi(h\not\ge
  \gamma^{(L)}_{i,j-1}-L^{-1+\delta/40})}{1-\tilde\pi(h\not\ge \gamma^{(L)}_{i,j-1}-L^{-1+\delta/40})},
\]
where $h\not\ge \gamma^{(L)}_{i,j-1}-L^{-1+\delta/40}$ (violation of
the floor constraint) is the event that there exists $v\in D_u$ such that $h(v)<\gamma^{(L)}_{i,j-1}-L^{-1+\delta/40}$.

It is not hard to see that $\tilde\pi(h\not\ge \gamma^{(L)}_{i,j-1}-L^{-1+\delta/40})$ is $O(L^{-n})$ for any given $n$. Indeed, analogously to \eqref{eq:tosumma} 
one has 
that, for
$v\in \partial D_u$,
\begin{eqnarray}
\label{eq:tosumma2}
 \gamma^{(L)}_{i,j-1}(v)\ge  \pi^{ABC}_L(h(v))-C_3 \epsilon_L L^{-1+\delta/50}.
\end{eqnarray}
From Proposition \ref{th:Petrov2} one deduces that, except with $\tilde\pi$-probability $O(L^{-n})$,
\[
h(v)\ge\pi^{ABC}_L(h(v))-(1/2)L^{-1+\delta/40}
\quad\text{ for every }\quad v\in D_u.
\]
On the other hand,
from \eqref{eq:tosumma} we see that for every $v\in D_u$
\[
\gamma^{(L)}_{i,j-1}(v)-L^{-1+\delta/40}\le \pi_L^{ABC}(h(v))-(3/4)L^{-1+\delta/40}.
\]
\end{proof}

\section{Proof of Corollary \ref{cor:quasifrozen}}

Let 
\begin{eqnarray}
  \label{eq:epn}
  \epsilon^{(n)}=\max_{x\in\partial U}|\varphi(x)-\varphi^{(n)}(x)|
\end{eqnarray}
and set \[\tilde\varphi^{(n)}=\varphi^{(n)}-\min_{x\in\partial U}(\varphi^{(n)}(x)-\varphi(x))\]
(we are just adding a constant to the boundary height $\varphi^{(n)}$).
Note that 
\begin{eqnarray}
  \varphi\le \tilde\varphi^{(n)}\le \varphi+2\epsilon^{(n)}
\end{eqnarray}
so that, if $\tilde\phi^{(n)}$ is the macroscopic shape in $U$ with boundary height
$\tilde\varphi^{(n)}$, one has
\begin{eqnarray}
\label{eq:confr}
  \bp\le \tilde\phi^{(n)}\le \bp+2\epsilon^{(n)}.
\end{eqnarray}
If $h_t^\eta$ and $\tilde h_t^\sigma$ denote respectively the dynamics with boundary heights $\varphi,\tilde\varphi^{(n)}$ and initial conditions $\eta,\sigma$, one
has by monotonicity
\begin{multline}
\label{eq:thirdline}
  \max_\eta\sup_{t>c(\delta)L^{2+\delta}}
\bbP(\exists v\in U_L: h_t^\eta(v)> \bp(v) +3\epsilon^{(n)})\\\le
 \max_\sigma\sup_{t>c(\delta)L^{2+\delta}}\bbP(\exists v\in U_L:\tilde h_t^\sigma(v)> \bp (v)+3\epsilon^{(n)})\\
\le
\max_\sigma\sup_{t>c(\delta)L^{2+\delta}}\bbP(\exists v\in U_L:\tilde h_t^\sigma(v)> \tilde\phi^{(n)}(v)+\epsilon^{(n)})
\end{multline}
where we used \eqref{eq:confr} in the second step.
Theorem \ref{th:pratica2} says that the third line of \eqref{eq:thirdline}
is some function $f(n,L)$ that vanishes as $L\to\infty$.
It is then standard that one can choose some sequence $n(L)$ that diverges
as $L\to\infty$ for which $f(n(L),L)$ still converges to zero.
In conclusion, setting $\epsilon_L:=\epsilon^{(n(L))}$, that tends to zero as $L\to\infty$, we have proven
\begin{eqnarray}
    \max_\eta\sup_{t>c(\delta)L^{2+\delta}}
\bbP(\exists v\in U_L: h_t^\eta(v)> \bp(v) +3\epsilon_L)=o(1)
\end{eqnarray}
which is ``half'' of the claim \eqref{eq:provaw}. The other bound can be obtained similarly.
\appendix

\section{Proof of Eq. \eqref{eq:ave}}
\label{app}

\subsection{Mean height}

We have to compute the average height
difference between two lattice points $u$ and $v$ in $\mathcal T_L\cap \Circle_{abc}$. Recall 
Remark \ref{rem:hl}: the height difference is directly related to the
number of lozenges crossed by a lattice path from $u$ to $v$ (the
height difference is independent of the chosen path). We will assume that the vector $u-v$ is along one of
the three lattice directions of $\mathcal T_L$ (in the general case,
we can always reduce to this situation by
choosing the path from $u$ to $v$ as a concatenation of a $L$-independent number
of straight paths along these directions) and by symmetry we 
consider only the case where $u-v$ is in the vertical direction, with
$v$ above $u$.

To avoid a plethora of $\lfloor \cdot \rfloor$, let us
 assume that \[A=aL,B=bL, C=cL\] are even
integers.
Recall our choice of (non-orthogonal) coordinates $(x,y)$ for points in $\mathcal T_L$. In this Appendix, to fit better the notation of \cite{Petrov1}, it is
convenient to translate the origin of the coordinates (that used to be in the center of the hexagon until now) in such a way that the center of the hexagon
$\varhexagon_{a_Lb_Lc_L}$ has coordinates 
\[
(-a-(b+c)/2+1/(2L),-(3/2)a-c-b/2+1/(2L)).
\]
See Figure \ref{fig:coordonnees_lozenges}.  Note that we translated the origin by a half-integer number of
lattice steps in both directions $x $ and $y$. Given a vertical edge
in $\mathcal T_L$,
let us label it with the coordinates $(x,y)$ (that are now integers times $1/L$) of its
mid-point.
Again to stay closer to the notations of \cite{Petrov1}, here we assume that $a+c=1$, instead of our usual normalization $a+b+c=1$.

Recall that $u$ and $v$ are points in $\mathcal T_L$ related by a
vertical segment and let $p_i,i=0,\dots,n-1$ be the vertical lattice
edges composing such segment (labeled say from below). Clearly,
$n=L|v-u|$
(because each edge of $\mathcal T_L$ has length $1/L$).
In this case from Remark \ref{rem:hl} we see that
\begin{eqnarray}
  \label{eq:2}
\pi_L^{abc}(h(v))-\pi_L^{abc}(h(u))=\frac1L\sum_{i=0}^{n-1}\left[1-\pi_L^{abc}({\bf
    1}_{p_i})\right]  
\end{eqnarray}
with  ${\bf 1}_{p}$ the indicator function that the vertical edge $p$ crosses a horizontal lozenge. 
We will prove the following estimate (which is a special case of
estimates proved in \cite[Section 7]{Petrov1}, except for the explicit
control of the error term):
\begin{proposition}\label{prop:noyau_horizontal}
  Let $p=(x,y)$ be  a vertical edge contained in  $ \Circle_{abc}$. 
One has
\begin{eqnarray}
  \label{eq:kinf}
 \pi_L^{abc}({\bf 1}_{p})= \Pi_\infty(x,y) +O(1/L):=\frac{1}{2\pi i} \int_{\bar{\rm
      w}_c}^{{\rm w_c}}
  \frac{(1+y-x)}{(z+x)(z +y +1)}d z +O(1/L)  
\end{eqnarray}
where:
\begin{itemize}
\item  the $O(1/L)$ error is uniform for $(x,y)$ in  compact subsets of
  $\Circle_{abc}$;
\item  ${\rm w_c}={\rm w_c}(x,y,a,b,c)$ is the unique 
  non-real critical point (w.r.t. $w$) of the
  function $S$, defined in equation \eqref{eq:def_S}, in the upper half complex plane and
  $\bar {\rm w}_c$ is the complex conjugate of ${\rm w}_c$;
\item the contour of integration in the complex plane intersects the real axis to the right
  of both poles.
\end{itemize}
\end{proposition}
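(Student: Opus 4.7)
My plan is to derive the asymptotics starting from the exact determinantal representation of the uniform measure on hexagon tilings due to Petrov. The crucial input is the fact that $\pi_L^{abc}$ is a determinantal point process on the set of (say) horizontal lozenges, with an explicit double-contour-integral correlation kernel $K_L(\cdot,\cdot)$ expressible as
\[
K_L(p,p) \;=\; \frac{1}{(2\pi i)^2} \oint_{\mathcal C_z}\!\oint_{\mathcal C_w} \frac{e^{L\, S(z;x,y,a,b,c)-L\, S(w;x,y,a,b,c)}}{z-w}\, R(z,w)\,dz\,dw,
\]
where $S$ is the ``action'' appearing in \cite{Petrov1,Petrov2}, $R(z,w)$ is an explicit rational pre-factor of order $1$ in $L$, and the contours $\mathcal C_z, \mathcal C_w$ enclose the appropriate sets of poles as prescribed in Petrov's formula. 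The probability that the vertical edge $p=(x,y)$ carries a horizontal lozenge equals $K_L(p,p)$ exactly.

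The first step is to compute the critical points of $w\mapsto S(w;x,y,a,b,c)$. A direct calculation shows $\partial_w S = 0$ is a quadratic equation in $w$, whose discriminant coincides (up to sign and scaling) with the polynomial $E_{abc}(u,v)$ appearing in Section~\ref{sec:localstructures}. Thus, inside the ellipse $\Circle_{abc}$ the discriminant is negative and the two critical points $\wc, \bar\wc$ form a complex-conjugate pair; on $\partial\Circle_{abc}$ they collide on the real axis, and outside they are real and distinct. This is precisely why the liquid region corresponds to the ellipse. The second step is to deform the contours into steepest-descent paths through $\wc$ and $\bar\wc$. The contours $\mathcal C_z$ and $\mathcal C_w$, after deformation, pick up residues at the poles of $R(z,w)$ and a collision contribution along the arc joining $\wc$ to $\bar\wc$. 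Standard saddle-point bookkeeping shows that the bulk of the deformed contours gives a contribution of size $O(e^{-cL})$, while the residues collected when moving one contour across the other account for the single integral
\[
\Pi_\infty(x,y)\;=\;\frac{1}{2\pi i}\int_{\bar\wc}^{\wc} \frac{(1+y-x)}{(z+x)(z+y+1)}\, dz.
\]
The form of the integrand is dictated by the residues of $R(z,w)$ at $z=w$ and at the poles $z=-x$ and $z=-y-1$, which encode the indicator that $p$ is covered by a horizontal (as opposed to left-leaning or right-leaning) lozenge.

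The third step is to extract the $O(1/L)$ error, which is the new quantitative content beyond the pointwise statement in \cite{Petrov1}. On the steepest-descent contours through the simple saddles $\wc,\bar\wc$, one has $\operatorname{Re}(S(w)-S(\wc))\leq -c_0 |w-\wc|^2$ for $w$ in a fixed neighborhood, with $c_0=c_0(x,y,a,b,c)>0$ bounded away from zero on compact subsets of $\Circle_{abc}$ (since $\wc$ stays bounded away from the real axis there). A standard quantitative Laplace/saddle-point estimate then gives that the saddle-point contribution to the double integral is of order $e^{-cL}$, which is negligible compared to $1/L$. The residual single-integral term $\Pi_\infty(x,y)$ captured from the pole crossing is exact; the $O(1/L)$ correction arises only from the discrete nature of the lattice (the fact that $A,B,C$ are integers close to but not equal to $aL,bL,cL$, and that the integrand is a ratio of Gamma-function products that differ from their continuum limit by $1+O(1/L)$ uniformly on the relevant contours). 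Collecting everything yields \eqref{eq:kinf}, with the error bound uniform on compact subsets of $\Circle_{abc}$ because all the ingredients (separation of $\wc$ from the real axis, positivity of $c_0$, control of $R(z,w)$ on the contours) depend continuously on $(x,y,a,b,c)$ and are therefore uniform on compacts.

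The main obstacle is the explicit quantitative control of the error term, not the identification of the leading behavior: one must verify that the implicit constants in the steepest-descent estimate depend continuously on all parameters and remain bounded on compact subsets of $\Circle_{abc}$. This requires keeping track of: (i) a uniform lower bound on $|\operatorname{Im} \wc|$ and on $c_0$; (ii) a uniform separation between $\wc$ and the poles $-x,-y-1$ of $R$; (iii) uniform bounds on the derivatives of $S$ in a fixed neighborhood of the saddles. All three properties hold on any compact subset $K\subset \Circle_{abc}$ because the implicit function theorem, applied to the equation defining $\wc$, gives smoothness of $\wc$ in $(x,y,a,b,c)$ away from $\partial\Circle_{abc}$. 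Once this uniformity is established, the conclusion follows by the standard saddle-point argument outlined above, and the sum \eqref{eq:2} telescopes to the Riemann sum for $\int(1-\Pi_\infty)$ along the segment from $u$ to $v$, giving \eqref{eq:ave} with the announced constant $K$.
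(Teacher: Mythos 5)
Your high-level strategy (Petrov's exact double-contour kernel, steepest-descent deformation of both contours through $\wc$ and $\bar\wc$, residue at the $1/(w-z)$ pole giving exactly $\Pi_\infty$) matches the paper's Appendix~\ref{app}. However, the final and hardest step contains a genuine error.

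You claim that once the contours have been moved to the steepest-descent configuration, the remaining double integral is of size $O(e^{-cL})$, so that the entire $O(1/L)$ correction comes only from the discretization of $A,B,C$ and the Gamma-function approximation. This is false. The exponential factor $e^{L[S(w)-S(z)]}$ equals $1$ at $w=z=\wc$ (and at $w=z=\bar\wc$), so the integrand is not exponentially small near the saddles; the integral localizes on a neighborhood of scale $L^{-1/2}$ in each variable. With the change of variables $\tilde w = \sqrt L(w-\wc)$, $\tilde z=\sqrt L(z-\wc)$, the area element gives $L^{-1}$, but the pole contributes $1/(w-z)=L^{1/2}/(\tilde w - \tilde z)$, so a naive saddle-point bound only yields $O(L^{-1/2})$, not $O(e^{-cL})$ and not even $O(1/L)$. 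The paper obtains $O(1/L)$ only by exploiting a parity cancellation: the leading Gaussian contribution $\iint \frac{1}{w-z}\exp\{LS''(\wc)[(w-\wc)^2-(z-\wc)^2]\}\,dw\,dz$ vanishes identically because the integrand is odd under $(w-\wc,z-\wc)\to(-(w-\wc),-(z-\wc))$ over a symmetric domain; the surviving contribution comes from the $O(L|w-\wc|^3)$ Taylor remainder, which is an extra factor of $L^{-1/2}$ and brings the Region~1 contribution down to $O(1/L)$. Your proposal, by asserting exponential smallness, skips this cancellation entirely, and without it the error bound would only be $O(1/\sqrt L)$ (insufficient for the downstream estimate \eqref{eq:ave}, which needs the $O(1/L)$ per-edge accuracy to survive the summation over $n\sim L|u-v|$ edges in \eqref{eq:2}).

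A secondary, related issue: you also need to treat the ``cross'' region where $w$ is near $\wc$ and $z$ is near $\bar\wc$ (or vice versa) separately; there the $1/(w-z)$ factor is bounded, so each variable contributes a Gaussian integral of size $O(L^{-1/2})$, giving $O(1/L)$ overall -- this works, but it is a separate case from the diagonal region and is also not exponentially small. Your claim that "(iii) uniform bounds on the derivatives of $S$" suffice to close the argument elides the distinction between these regions and hides exactly the place where the proof requires the extra structural input.
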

Now we can plug   \eqref{eq:kinf} into
\eqref{eq:2}. After summation on $i$, the error term $O(1/L)$ gives an error term
$O(n/L^2)=O(|u-v|/L)$. The main term
$1-\Pi_\infty$ gives as dominant term the line integral
\begin{eqnarray}
  \label{eq:3}
\int_{\mathcal C(u,v)} (1-\Pi_\infty)\,ds
\end{eqnarray}
(with $\mathcal C(u,v)$ the straight path from $u$ to $v$)
plus again an error $O(|u-v|/L)$ by Riemann approximation.

Finally we do \emph{not} need to check that the line integral equals
$\bar\phi_{abc}(v) - \bar\phi_{abc}(u)$: this follows from Theorem
\ref{th:CLP} on the existence of the limiting shape.

\subsection{Correlation kernel}

\begin{wraptable}{R}{0.5\textwidth}
      \begin{center}
\begin{tabular}{| c| c |}
\hline\hline
Our notations & Notations from \cite{Petrov1}\\
\hline\hline
-1/2 & $A_1$\\
$A$ & $B_1-A_1$\\
$B$ & $A_2-B_1$\\
$C$ & $B_2-A_2$\\
$x$ & $-x$\\
$y$ & $-x-n$ \\
$L$ & $N$\\
\hline
      \end{tabular}
      \end{center}
\caption{The correspondence between the two sets of notations, in the case where in \cite{Petrov1} the origin is chosen such that $A_1=-1/2$.}
\label{tab:cambio}
\vspace{0cm}
\end{wraptable} 

The proof of Proposition \ref{prop:noyau_horizontal} is essentially
identical to the proof of Theorem 2 of \cite[Section 7]{Petrov1},
except that we keep track more precisely of the size of
errors. Actually, Theorem 2 of  \cite{Petrov1} considers a much more general situation: first of all, the domain to be tiled is not simply a hexagon  but a more general polygonal shape (the hexagon being a particular case).
Secondly, Theorem 2 of \cite{Petrov1} allows  to get the 
asymptotics of the probability of any event involving a fixed number $m$ of lozenges
(say, the probability of the event ${\bf 1}_{p_1}\cdots{\bf 1}_{p_m}$). 
For simplicity of exposition, we will however restrict ourselves to the  hexagonal region
$\varhexagon_{a_Lb_Lc_L}$ (which corresponds to the polygonal shape of \cite{Petrov1} with the choice $k=2$ there) and to 
the observable ${\bf 1}_{p}$ we are interested in. 

\smallskip

\begin{wrapfigure}{R}{0.5\textwidth}
\begin{center}
 \includegraphics[width=0.4\textwidth]{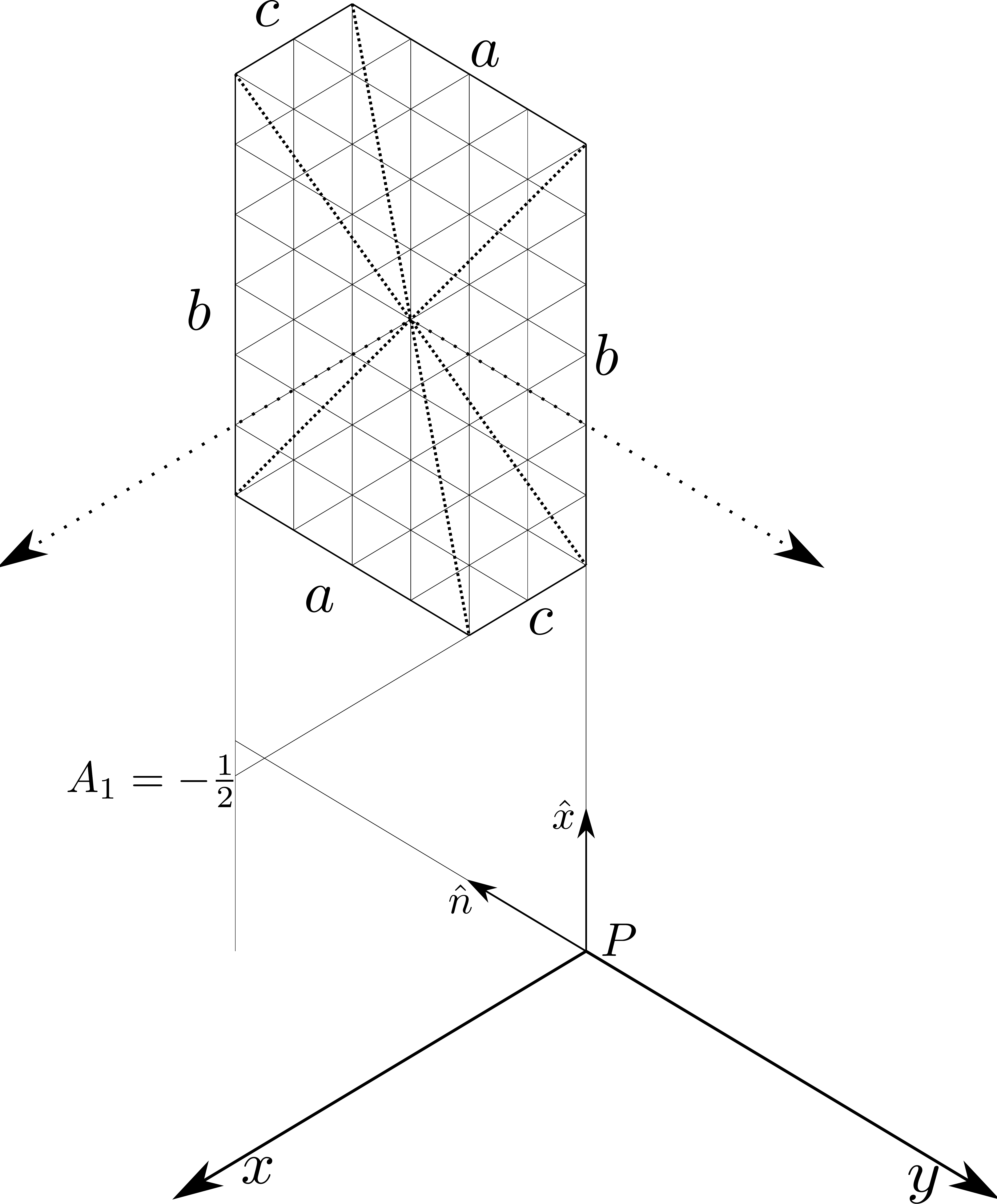}
 \caption{The coordinates we used until now (dotted axes) had the
   origin at the center of the hexagon. The coordinates we use in the
   Appendix are centered in $P$. The axes $\hat n,\hat x$ correspond
   to the the coordinates used in \cite{Petrov1}.}
\label{fig:coordonnees_lozenges}
\end{center}
\vspace{0cm}
\end{wrapfigure}

Let us introduce the Pochhammer symbol $(y)_m = y(y+1)\ldots(y+m-1)$.
The probability that the vertical edge  $p=(x,y)$ crosses
a horizontal lozenge can be read from formulas (2.5) and (2.6) of
\cite{Petrov1}: it is given by a double integral in the complex
plane, with a $L$-dependent but explicit kernel. In our language, we have:
\begin{theorem}\label{thm:noyau_exact}
  Let $p=(x,y)$ be a vertical edge in the hexagon.  Define $X=L x,
  Y=Ly$ ($X,Y$ are automatically integers with the present convention
  for coordinates).  One has
\begin{multline}
  \label{eq:losangone}
  \pi_L^{abc}({\bf 1}_{p})=\frac
  {(L-X+Y)}{(2i\pi)^2} \\
\times\oint dZ \oint dW \frac{(Z+X+1)_{L-X+Y-1}}{(W+X)_{L-X+Y+1}} \frac{1}{W-Z}  \frac{(-W)_{A}}{(- Z)_{A} } \frac{(A+B-W)_{C}}{(A+B - Z)_{C} }
\end{multline}
where the integration contours on $Z$ and $W$ have to chosen such that:
\begin{itemize}
  \item The $Z$ contour runs counter-clockwise and includes the integer real points $-X, -X+1, ..., A+B+C$ and no other integer point of the real line;
  \item The $W$ contour runs counter-clockwise; it contains the $Z$ contour and also integer
    points $-Y-L,-Y-L+1,\dots,-X$ (it may contain other integer
    points). 
\end{itemize}
\end{theorem}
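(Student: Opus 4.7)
The plan is to obtain the exact formula \eqref{eq:losangone} by direct specialization of the correlation-kernel formulas (2.5)--(2.6) of \cite{Petrov1} to the hexagon $\varhexagon_{a_Lb_Lc_L}$ and to the one-point observable ${\bf 1}_p$. Since these formulas are already proved in full generality for the larger class of polygonal domains in \cite{Petrov1}, essentially no new analytic work is needed; the task reduces to unambiguous coordinate bookkeeping.

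The first conceptual step is to recall the structural fact that the uniform measure on lozenge tilings of a finite polygonal domain defines a determinantal point process on the set of positions of horizontal lozenges. This is obtained by encoding tilings as systems of non-intersecting lattice paths (equivalently, as dimer configurations on a finite honeycomb graph), applying the Lindstr\"om-Gessel-Viennot lemma to express correlations as determinants of a Karlin-McGregor kernel, and then constructing biorthogonal functions to put the kernel into the explicit double contour integral form of Theorem 2 of \cite{Petrov1}. In particular, the probability that a specified vertical edge $p=(x,y)$ is crossed by a horizontal lozenge is the diagonal value of this kernel evaluated at $p$.

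The second step is to translate Petrov's notation into ours via Table \ref{tab:cambio}. Under the substitutions $X=Lx$, $Y=Ly$, $A_1=-1/2$, $B_1-A_1=A$, $A_2-B_1=B$, $B_2-A_2=C$, the Pochhammer factors, the prefactor $(L-X+Y)/(2i\pi)^2$ and the factor $1/(W-Z)$ of Petrov's expression match term-by-term the integrand in \eqref{eq:losangone}. One then has to verify the contour prescription: the $Z$-contour must enclose the poles coming from $(-Z)_A$ and $(A+B-Z)_C$, which are the integers $-X,\dots,A+B+C$, while the $W$-contour must additionally enclose the poles of $(W+X)_{L-X+Y+1}$, i.e.\ the integers $-Y-L,\dots,-X$. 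This matches Petrov's convention and yields the claimed conditions.

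The main obstacle, though combinatorial rather than analytic, is to consistently handle the mismatch of conventions between the two papers: the half-integer shift of the origin adopted at the beginning of the appendix (chosen so that the integer lattice in our coordinates corresponds to midpoints of vertical edges), the normalization $a+c=1$ used in \cite{Petrov1} as opposed to our usual $a+b+c=1$, and the sign reversals $x\mapsto-x$, $y\mapsto-x-n$ in the dictionary. Once these are carried out coherently, no further input is required and \eqref{eq:losangone} follows.
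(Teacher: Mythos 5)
Your proposal takes exactly the route of the paper: specialize Petrov's exact double-contour correlation kernel (formulas (2.5)--(2.6) and Theorem~2 of \cite{Petrov1}) to the hexagon and to the single-point observable ${\bf 1}_p$, then translate notation via Table~\ref{tab:cambio}; the paper itself explicitly declines to spell out the coordinate transformation further. Two incidental slips in your supporting remarks are harmless but worth noting: Petrov derives the kernel from Gelfand--Tsetlin schemes and Eynard--Mehta-type machinery for interlacing arrays rather than directly from Lindstr\"om--Gessel--Viennot biorthogonalization, and the $Z$-poles contributed by $(-Z)_A$ and $(A+B-Z)_C$ are the integers $\{0,\dots,A-1\}\cup\{A+B,\dots,A+B+C-1\}$, not $-X,\dots,A+B+C$ (the latter is the translated contour prescription from \cite{Petrov1}, which merely happens to enclose all of those poles).
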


We warn the reader who would like to find this formula in \cite{Petrov1} that  conventions in
\cite{Petrov1} are  different from ours: the correspondence between the
two sets of notations is conceptually trivial but a bit tricky as we
need to rotate the hexagon and make an affine transformation to go
from one setting to the other (compare for instance the shape of
lozenges in our Figure \ref{fig:lozenges} and in Figure 3 of \cite{Petrov1}). We will not give details on the
transformation but we summarize the correspondence in Table \ref{tab:cambio}.

\subsection{Contour changes}

In this section we use upper-case letters for lengths and coordinates 
proportional to $L$ 
and lower-case letters for the corresponding rescaled variables of order $1$.

First we change the scale by the change of variable $Z \rightarrow Z/L=z$ and $W \rightarrow W/L=w$ and we reorder the resulting $L$-dependent factor to get:
\begin{eqnarray}
  \label{eq:4}
   \pi_L^{abc}({\bf 1}_p) = \frac{1}{(2 \pi i)^2} \oint \oint \frac{1}{w-z} \frac{(1+y-x)}{(w+x)(w+y +1)} \frac{P_L(w, x,y)}{P_L(z,x,y)}dz\,dw 
\end{eqnarray}
 where 
\[
  P_L ( z, x, y ) = \frac{(L+L y-L x-1)!}{(Lz +L x +1)_{L+Y-X-1}} (- Lz)_{A} (A+B - Lz)_{C}.
\]

The following approximation result can be extracted from \cite[Lemma 7.4]{Petrov1}:
\begin{equation}\label{eq:app_P}
P_L(w,x,y) = C_L \left( \frac{(w+x)(w+y+1)}{1+y-x} \right)^{1/2} \exp(LS(w;x,y) + O(1/L))
\end{equation}
where $C_L$ may depend on $A,B,C$ but not on $w,x,y$,
\begin{multline}\label{eq:def_S}
S(w; x, y) = (w +x)\ln(w+x) - (w+y +1)\ln(w+y+1) \\ + (1+y-x)\ln(1+y-x)+  (a-w)\ln(a-w) + (a+b+c-w)\ln(a+b+c-w)\\ - (-w)\ln(-w) - (a+b-w)\ln(a+b-w)
\end{multline}
and
the $O(1/L)$ terms can be taken uniform on the integration contours.

As proven in \cite{Petrov1}, if $(x,y)$ is in the ellipse $\Circle_{abc}$ then
$S$ has two conjugate non-real critical points ${\rm w}_c,\bar{\rm w}_c$
(say with ${\rm w}_c$ in the upper half complex plane). Moreover,
$S''({\rm w}_c;x,y)\ne 0$. 

Recall that
the integration contour for $w$ includes that for $z$. As explained in
\cite{Petrov1}: 
\begin{proposition}
  \label{prop:muovere}
One can move the integration contours so that:
\begin{itemize}
\item they cross exactly at the two points ${\rm w}_c,\bar{\rm w}_c$;
\item the $w$ contour lies in the 
region of the complex plane where $\Re(S(w;x,y)-S({\rm w}_c;x,y))<0$
(except at ${\rm w}_c, \bar{\rm w}_c$ where the real part is obviously
zero);
\item the $z$ contour lies in the 
region of the complex plane where $\Re(S(z;x,y)-S({\rm w}_c;x,y))>0$
(except at ${\rm w}_c, \bar{\rm w}_c$ where  it is zero);
\item the contours avoid any poles of the integrand (that are on the
  real axis).
\end{itemize}

\end{proposition}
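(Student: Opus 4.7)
\textbf{Proof plan for Proposition \ref{prop:muovere}.} The plan is a classical steepest-descent contour deformation for the action $S$, exploiting crucially the conjugation symmetry $S(\bar w;x,y) = \overline{S(w;x,y)}$.

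First I would examine the local picture near each saddle point. Since $S''({\rm w}_c)\ne 0$, the Taylor expansion
\[
S(w) - S({\rm w}_c) = \tfrac{1}{2}S''({\rm w}_c)(w-{\rm w}_c)^2 + O((w-{\rm w}_c)^3)
\]
shows that the level set $\{\Re(S - S({\rm w}_c)) = 0\}$ near ${\rm w}_c$ consists of two smooth arcs meeting transversally at $45^\circ$ angles (rotated by $-\tfrac{1}{2}\arg S''({\rm w}_c)$); the complement of these arcs in a small disk splits into four open sectors on which $\Re(S - S({\rm w}_c))$ has alternating signs. By conjugation, the analogous picture holds at $\bar{\rm w}_c$. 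This already tells me the directions along which $\mathcal C_z$ and $\mathcal C_w$ must enter and leave each saddle in order to lie in the correct sign region.

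Next I would analyze the global level set $\Gamma = \{w \in \mathbb C : \Re(S(w) - S({\rm w}_c)) = 0\}$. Away from ${\rm w}_c$, $\bar{\rm w}_c$ and the logarithmic singularities of $S$ on the real axis (at $w\in\{-x,-y-1,0,a,a+b,a+b+c\}$), $\Gamma$ is a smooth $1$-manifold. Using the explicit asymptotics of $S$ near each branch point and at infinity, plus the conjugation symmetry, I would trace each local arc emanating from ${\rm w}_c$ and $\bar{\rm w}_c$ to its endpoint on the real axis (or at $\infty$), and thus obtain the connected-component decomposition of $\mathbb C\setminus\Gamma$. The desired qualitative output is the existence of a ``positive region'' $\Omega_+$ whose closure encloses exactly the poles of the $Z$-integrand originally enclosed by the $Z$-contour, and a ``negative region'' $\Omega_-$ whose closure encloses exactly the additional poles enclosed by the original $W$-contour.

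Given this topological description I would then construct the deformed contours: take $\mathcal C_z \subset \overline{\Omega_+}$ to be a conjugation-symmetric closed curve passing through both saddles along steepest-descent directions of $-\Re S$ and enclosing exactly the $Z$-poles; take $\mathcal C_w \subset \overline{\Omega_-}$ analogously. A small perturbation near the real axis keeps the contours inside the correct sign region while avoiding the real-axis poles. To conclude, I would justify the homotopy from the original nested contours to $(\mathcal C_z,\mathcal C_w)$: the intermediate family can be chosen to avoid all poles, and the $1/(w-z)$ singularity causes no issue because $\mathcal C_z$ lies strictly inside $\mathcal C_w$ except at the two saddles, where the two contours meet transversally and the singularity is integrable.

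The hard part is the global step: tracing the arcs of $\Gamma$ and matching them with the pole-enclosure data requires a case analysis, since a priori the topology of $\Gamma$ could vary with $(x,y)$. It is precisely the hypothesis $(x,y)\in\Circle_{abc}$ (which guarantees a pair of genuinely non-real conjugate saddles with $S''({\rm w}_c)\ne 0$) that makes this analysis clean; outside the ellipse the saddles collide on the real axis and the deformation becomes impossible, which is precisely the frozen-region phenomenon of Theorem~\ref{th:CLP}. This case analysis is exactly what is carried out in \cite[Section 7]{Petrov1}, so in practice I would import their conclusion rather than redo it from scratch.
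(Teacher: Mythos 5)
The paper itself gives no proof of this Proposition: it is stated with the preamble ``As explained in \cite{Petrov1}'' and a pointer to the schematic Figure~\ref{fig:nuovicontorni}, i.e.\ the global contour analysis is imported wholesale from \cite[Section~7]{Petrov1}. Your proposal is therefore essentially the same route as the paper's: you lay out the standard steepest-descent scaffolding (local quadratic picture at $\wc$ and $\bar\wc$, conjugation symmetry, global level set $\Gamma=\{\Re(S-S(\wc))=0\}$, pole bookkeeping) and then, exactly as the paper does, defer the hard global topology of $\Gamma$ and its compatibility with the pole configuration to Petrov's case analysis.

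One small internal inconsistency in your sketch worth flagging: you write that after the deformation ``$\mathcal C_z$ lies strictly inside $\mathcal C_w$ except at the two saddles.'' That cannot hold simultaneously with the Proposition's first bullet, which requires the contours to \emph{cross} (transversally) at $\wc$ and $\bar\wc$. If they genuinely cross, then an arc of $\mathcal C_z$ between the two saddles necessarily lies \emph{outside} $\mathcal C_w$; this is exactly why a residue along a path from $\bar\wc$ to $\wc$ appears, which the paper then identifies with the single integral $\Pi_\infty$. So the homotopy picture should not be ``$\mathcal C_z$ stays inside $\mathcal C_w$ throughout,'' but rather ``during the deformation the two contours sweep across each other along the segment joining the saddles, producing the residue term.'' This is a phrasing issue rather than a structural gap, but it matters because the residue term is the entire point of the deformation.
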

See Figure \ref{fig:nuovicontorni}.
\begin{figure}
  \includegraphics[width=7cm]{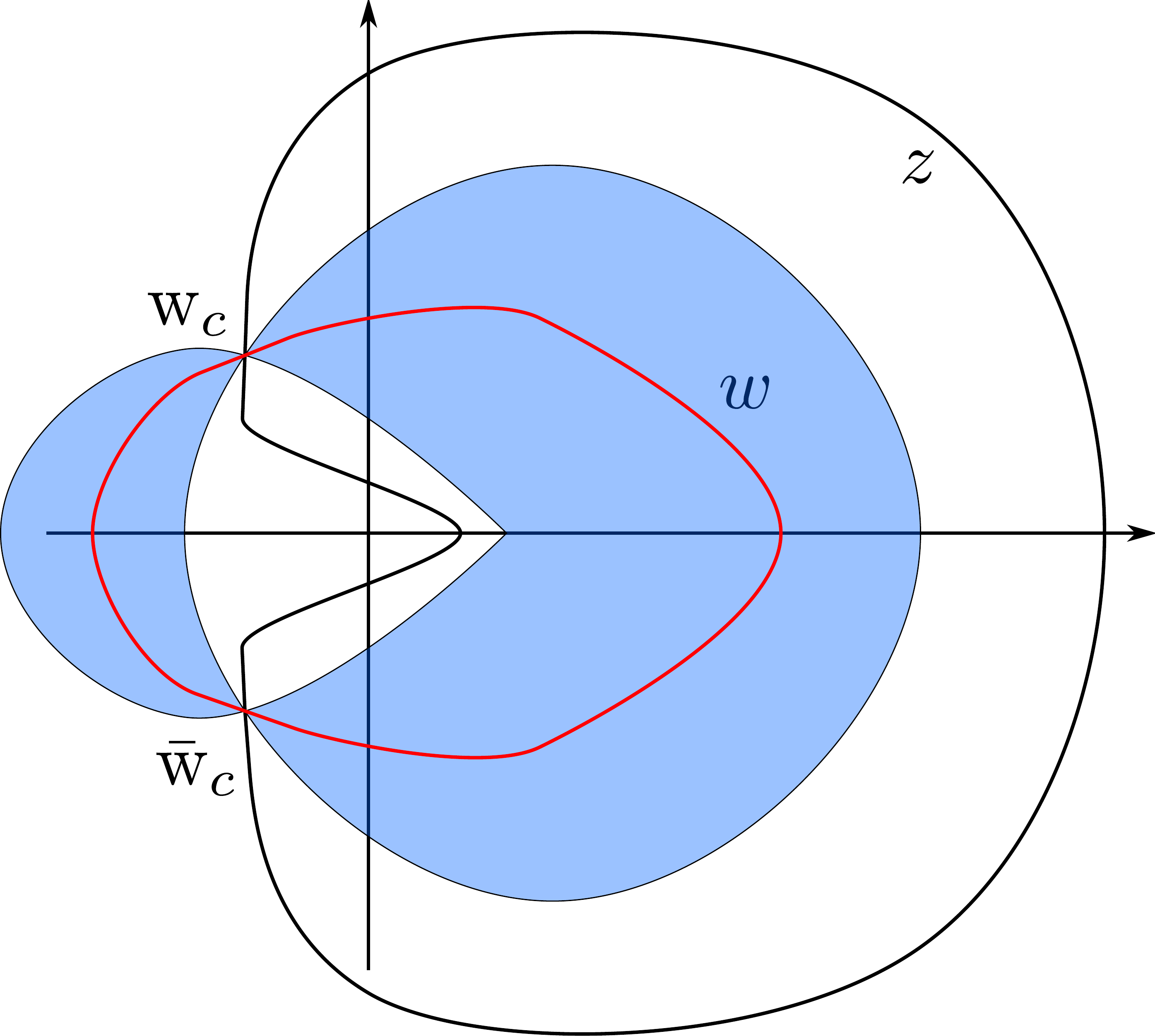}
\caption{A schematic view of the new contours for $z$ and $w$. The shaded region corresponds to $\{w\in\mathbb C :\Re(S(w)-S(\wc))<0\}$. See also \cite[Fig. 11]{Petrov1}.}
\label{fig:nuovicontorni}
\end{figure}
Of course, in the process of moving the contours some residue will
appear, because there is a pole $1/(w-z)$ and the new contours cross. As
shown in \cite[Lemma 7.9]{Petrov1}, the residue
is exactly the single integral
\[
\frac{1}{2\pi i} \int_{\bar{\rm w}_c}^{{\rm w}_c}
  \frac{(1+y-x)}{(z+x)(z+y +1)} \frac{P_L(z, y,x)}{P_L(z,y,x)} dz=\frac{1}{2\pi i} \int_{\bar{\rm w}_c}^{{\rm w}_c}
  \frac{(1+y-x)}{(z+x)(z+y +1)} dz
\]
which is nothing but $
\Pi_\infty(x,y)$.
To prove \eqref{eq:kinf} it remains therefore only to show that the double integral in the
r.h.s. of \eqref{eq:4}, where now $z$ and $w$ run along the new
contours, gives a contribution $O(1/L)$.

\subsection{Integral approximations}

For the double integral we use \eqref{eq:app_P} to get
\begin{multline}
\label{eq:dun}
  \frac{1}{(2i\pi)^2} \oint \oint \left(\frac{(1+y-x)}{(z+x)(z +y +1)} \right)^{\frac{1}{2}}\left(\frac{(1+y-x)}{(w+x)(w +y +1)} \right)^{\frac{1}{2}} \\
    \times \frac{1}{w-z}\exp L[S(w;x,y)-S(z;x,y)] (1+O(1/L)) dz dw
\end{multline}

Recall the choice of the integration contours for $z$ and $w$
described in Proposition \ref{prop:muovere} and observe that along
such contours the exponential in \eqref{eq:dun} is bounded by $1$ in
absolute value. Also, we choose the contours so that close to the
critical points they are exactly linear.

We fix some small $\delta>0$ and we  divide the integral into three regions:
\begin{enumerate}
\item [{\bf Region 1}] $w,z$ are both  within distance
  $L^{-1/2+\delta}$ from ${\rm w}_c$ or both within distance
  $L^{-1/2+\delta}$ from $\bar{\rm w}_c$;
\item [{\bf Region 2}] $z$ is within distance $L^{-1/2+\delta}$ from
  ${\rm w}_c$ and $w$  is within distance $L^{-1/2+\delta}$ from
  $\bar{\rm w}_c$, or viceversa;
\item [{\bf Region 3}] the rest of the integration contours.
\end{enumerate}

Let us consider {\bf Region 3} first, and assume that $z$ is at
distance at least $L^{-1/2+\delta}$ from both critical points.
The first two factors in the integral are bounded
(because the contours stay away from the poles). The factor $1/(w-z)$
can be upper bounded by $L^{1/2-\delta}$ in absolute value. The
exponential is $O(\exp(-L^{2\delta}))$. Indeed, recall that
$\Re(S(w;x,y)-S({\rm w}_c;x,y))\le 0$ while $\Re(S(z;x,y)-S({\rm
  w}_c;x,y))> 0$. More precisely, since $S''({\rm w_c},x,y)\ne 0$ and
the third derivative is finite, one has $\Re(S(z;x,y)-S({\rm
  w}_c;x,y))> L^{-1+2\delta}$ from a Taylor expansion. Finally the $O(1/L)$
term in \eqref{eq:dun} is bounded on the contour so it gives negligible contribution. In conclusion,
the contribution from {\bf Region 3} is $O(\exp(-L^{2\delta}))$.

Now let us consider {\bf Region 2} and assume by symmetry that $w$ is close to ${\rm w}_c$ and $z$ is close to $\bar {\rm w}_c$. We can write $\Re(S(w;x,y)-S({\rm w}_c;x,y))=- C_1 |w-{\rm w}_c|^2+O(|w-{\rm
  w}_c|^3)$, $\Re(S(z;x,y)-S({\rm w}_c;x,y))=-C_2|z-\bar {\rm w}_c|^2+O(|z-\bar {\rm
  w}_c|^3)$ for some $C_i>0$. Furthermore since $z$ and $w$ are far from each other we can use a uniform bound on all terms outside the exponential and them we get a product of two Gaussian integrals, each
of which gives a $O(1/\sqrt L)$ contribution. Again the $O(1/L)$ term does not play any role.

Finally we consider {\bf Region 1} which is more difficult because the
$\frac{1}{w-z}$ term is singular. By symmetry we will only consider the case with
both $w$ and $z$ close to ${\rm w}_c$.
Recall that the integration contours are linear near $\wc$, i.e. one
has
\[
w=\wc+t \theta,\; t\in[-L^{-1/2+\delta},L^{-1/2+\delta}],\quad z=\wc+s \zeta,\; s\in[-L^{-1/2+\delta},L^{-1/2+\delta}]
\]
with $\theta,\zeta$ two modulus-1 complex numbers.

The expression we have to control is then:
\begin{multline}
 E = \int_{\wc-\theta L^{-1/2+\delta}}^{\wc+\theta L^{-1/2+\delta}} dw
 \int_{\wc-\zeta L^{-1/2+\delta}}^{\wc+\zeta L^{-1/2+\delta}} dz
 \left(\frac{(1+y-x)}{(z+x)(z +y +1)} \right)^{\frac{1}{2}}\\\nonumber
\times\left(\frac{(1+y-x)}{(w+x)(w +y +1)} \right)^{\frac{1}{2}} 
    \frac{1+O(1/L)}{w-z} \exp\left\{ L[S(w;x,y)-S(z;x,y)]\right\}.
\end{multline}
The first step is to get rid of the $\left(\frac{(1+y-x)}{(z+x)(z +y
    +1)} \right)^{\frac{1}{2}}\left(\frac{(1+y-x)}{(w+x)(w +y +1)}
\right)^{\frac{1}{2}}$ factor. Remark that it has a non-zero limit
when $w=z=\wc$ and that we can use a Taylor expansion to approximate
it by $a_0+a_1(z-\wc)+a_2(w-\wc)+O(L^{-1+2\delta})$. Plugging this
into the integral we get an expression of the form :
\begin{multline}
\label{eq:a0a1}
 E =  \iint dw dz  
        \frac{a_0+a_1(z-\wc)+a_2(w-\wc)+O(L^{-1+2\delta}) }{w-z} \\\times\exp L[S(w;x,y)-S(z;x,y)]
\end{multline}
where from now on $\iint dw dz$ means $\int_{\wc-\theta L^{-1/2+\delta}}^{\wc+\theta L^{-1/2+\delta}} dw
 \int_{\wc-\zeta L^{-1/2+\delta}}^{\wc+\zeta L^{-1/2+\delta}} dz$. Note that the $O(1/L)$ term has been absorbed into the $O(L^{-1+2\delta})$. 

We start by looking at the most difficult term, the one proportional
to $a_0$, call it $E_0$. Remark that we have :
\[
 \iint dw dz\frac{1}{w-z} \exp \left\{LS''(\wc)[(w-\wc)^2-(z-\wc)^2]\right\} =0.
\]
Indeed the integrand is an odd function of $(z-\wc,w-\wc)$ and we integrate on a symmetric
 domain (the integral is absolutely convergent). We can thus rewrite $E_0$ as
\begin{multline}
  E_{0} =a_0\iint dw dz \frac{1}{w-z}\exp
  \left\{LS''(\wc)[(w-\wc)^2-(z-\wc)^2] \right\}\\
\nonumber
  \times \left(-1+ \exp \left\{L\left[S(w;x,y)-S(z;x,y) - S''(\wc)((w-\wc)^2-(z-\wc)^2)\right] \right\} \right).
\end{multline}
With a Taylor expansion on $\exp LS(w)$ we get
\begin{multline}
  \label{eq:23}
  E_{0} = a_0\iint dw dz \frac{O\left(L\abs{w- \wc}^3+L\abs{z-\wc}^3
    \right)}{w-z}\\
\times\exp \left\{LS''(\wc)[(w-\wc)^2-(z-\wc)^2] \right\}.
\end{multline}
Remark that this expansion is valid because $L\abs{w-\wc}^3= O(L^{-1/2+3\delta}) = o(1)$. We take the absolute value inside the integral and change variables 
$\tilde z = \sqrt L (z -\wc)$ and $\tilde w = \sqrt L (w -\wc)$ to get
\[
  \abs{E_{0}} \leq  \frac{K}{L} \int_{-\theta L^\delta}^{\theta
    L^\delta} d \tilde w \int_{-\zeta L^\delta}^{\zeta L^\delta} d \tilde z\;\frac{ \abs{\tilde z}^3 + \abs{\tilde w}^3}{\abs{\tilde z - \tilde w}} \exp \Re[S''(\wc)(\tilde w^2 -\tilde z^2 )]
\]
and we emphasize that both $S''(\wc)\tilde w^2$ and $-S''(\wc)\tilde
z^2$ have negative real part.
Finally since the linear integration contours for $\tilde z$ and
$\tilde w$ are not colinear,  we can lower bound $\abs{\tilde z -
  \tilde w}$ by a constant times either $\abs{\tilde z}$ or $\abs{\tilde w}$ so we have
\[
  \abs{E_{0}} \leq \frac{K'}{L} \int_{-\theta L^\delta}^{\theta
    L^\delta} d \tilde w \int_{-\zeta L^\delta}^{\zeta L^\delta} d \tilde z (\abs{\tilde z}^2 + \abs{ \tilde{w}}^2)  \exp \Re[S''(\wc)(\tilde w^2 -\tilde z^2 )]
\]
and the Gaussian integral in the right hand side is bounded uniformly
in $L$. 

\smallskip
Let us go back to the integral $E$. The terms $E_1,E_2$ containing $a_1$ or
$a_2$ can be treated similarly to $E_0$.  They are actually easier
since $(w-\wc)$ or $(z-\wc)$ produce a term $L^{-1/2}\tilde w$ or
$L^{-1/2}\tilde z$: since $\tilde w/(\tilde w-\tilde z)$ is bounded,
one gets for $i=1,2$
\[
 \abs{E_{i}}  \leq \frac{K''|a_i|}{L} \int_{-\theta L^\delta}^{\theta
   L^\delta} d \tilde w \int_{-\zeta L^\delta}^{\zeta L^\delta} d \tilde z  \exp\left\{ \Re[S''(\wc)(\tilde w^2 -\tilde z^2 )]\right\}(1+o(1))=O(1/L).
\]
Finally, the term $E_R$ containing the  $O(L^{-1+2\delta})$ error can
be estimated once again with the same Taylor expansion of $S$ and the same
change of variables, and it turns out to be $O(L^{-3/2+2\delta})$.
We just have to remark that the integral
\[
  \int_{-\theta L^\delta}^{\theta L^\delta} d \tilde w \int_{-\zeta
    L^\delta}^{\zeta L^\delta} d \tilde z \frac{1}{\abs{\tilde z-\tilde w}}\exp \Re[S''(\wc)(\tilde w^2 -\tilde z^2 )]
\]
does not diverge with $L$ since 
the singularity  $\frac{1}{\abs{\tilde z-\tilde w}}$ is integrable in $\bbR^2$ (recall that even though $\tilde z$ and $\tilde w$ are complex they live on different lines so we are really integrating on a two dimensional box) and thus the Gaussian integral is bounded.

Overall we have proven that the double integral \eqref{eq:dun} is $O(1/L)$.
 
 \section*{Acknowledgments}
We are very grateful to Alexei Borodin and  Leonid Petrov for
precious indications on the methods of \cite{Petrov1,Petrov2}, and
to Pietro Caputo and Fabio Martinelli for many enlightening discussions.
F. L. T. acknowledges the support of Agence Nationale de la Recherche through grant ANR-2010-BLAN-0108.

\end{document}